\documentclass[]{amsart}
\usepackage[utf8]{inputenc}
\usepackage[mathscr]{eucal} 
\usepackage{stmaryrd} 
\usepackage{amsmath,amsthm,amsfonts,amssymb,amscd} 
\usepackage[dvipsnames]{xcolor} 
\usepackage{xspace} 
\usepackage{fancyhdr} 
\usepackage{graphicx} 
\usepackage{listings} 
\usepackage[]{hyperref} 
\usepackage{enumitem} 
\usepackage{relsize} 
\usepackage{tikz-cd} 
\usepackage{mdwlist} 
\usepackage{multicol} 
\usepackage{float} 
\usepackage{adjustbox} 
\usepackage{tikz} 
\usepackage[noabbrev,nameinlink]{cleveref} 
\Crefformat{section}{#2\S#1#3}
\usepackage{setspace} 
\usepackage{bbm} 
\usepackage{mathtools} 
\usepackage{epigraph} 
\usetikzlibrary{matrix, calc, arrows}

\hypersetup{
    colorlinks=true, 
    linktoc=all,     
    linkcolor=Brown,citecolor=Brown,urlcolor=MidnightBlue,  
}
\setlist[enumerate]{label=(\alph*)} 
\usetikzlibrary{graphs,decorations.pathmorphing,decorations.markings}
\tikzcdset{scale cd/.style={every label/.append style={scale=#1},
        cells={nodes={scale=#1}}}}

\newcommand{\Hom}{\operatorname{Hom}}
\newcommand{\Aut}{\operatorname{Aut}}

\newcommand{\tr}{\operatorname{tr}}

\newcommand{\inv}{^{-1}}

\newcommand{\End}{\operatorname{End}}

\newcommand{\Res}{\operatorname{Res}}

\newcommand{\Inf}{\operatorname{Inf}}

\newcommand{\id}{\operatorname{id}}

\newcommand{\im}{\operatorname{im}}

\newcommand{\Spc}{\operatorname{Spc}}
\newcommand{\Pic}{\operatorname{Pic}}
\newcommand{\Irr}{\operatorname{Irr}}

\newcommand{\open}{\operatorname{open}}
\newcommand{\cone}{\operatorname{cone}}

\newcommand{\on}[1]{\operatorname{#1}}

\newcommand{\calB}{\mathcal{B}}

\newcommand{\calE}{\mathcal{E}}

\newcommand{\calI}{\mathcal{I}}

\newcommand{\calM}{\mathcal{M}}
\newcommand{\calN}{\mathcal{N}}
\newcommand{\calO}{\mathcal{O}}

\newcommand{\catB}{\mathscr{B}}

\newcommand{\catD}{\mathscr{D}}

\newcommand{\catK}{\mathscr{K}}
\newcommand{\catL}{\mathscr{L}}
\newcommand{\catM}{\mathscr{M}}
\newcommand{\catN}{\mathscr{N}}
\newcommand{\catO}{\mathscr{O}}
\newcommand{\catP}{\mathscr{P}}
\newcommand{\catQ}{\mathscr{Q}}
\newcommand{\catR}{\mathscr{R}}

\newcommand{\catT}{\mathscr{T}}

\newcommand{\frakm}{\mathfrak{m}}

\newcommand{\frakp}{\mathfrak{p}}
\newcommand{\frakq}{\mathfrak{q}}

\newcommand{\bbC}{\mathbb{C}}

\newcommand{\bbF}{\mathbb{F}}

\newcommand{\bbN}{\mathbb{N}}

\newcommand{\bbR}{\mathbb{R}}

\newcommand{\bbZ}{\mathbb{Z}}
\newcommand{\bbone}{\mathbbm{1}}

\newcommand{\Weyl}[2]{{#1}/\!\!/{#2}}

\newcommand{\comp}{\on{comp}}

\makeatletter
\newcommand*{\doublerightarrow}[2]{\mathrel{
        \settowidth{\@tempdima}{$\scriptstyle#1$}
        \settowidth{\@tempdimb}{$\scriptstyle#2$}
        \ifdim\@tempdimb>\@tempdima \@tempdima=\@tempdimb\fi
        \mathop{\vcenter{
                \offinterlineskip\ialign{\hbox to\dimexpr\@tempdima+1em{##}\cr
                    \rightarrowfill\cr\noalign{\kern.5ex}
                    \rightarrowfill\cr}}}\limits^{\!#1}_{\!#2}}}
\newcommand*{\triplerightarrow}[1]{\mathrel{
        \settowidth{\@tempdima}{$\scriptstyle#1$}
        \mathop{\vcenter{
                \offinterlineskip\ialign{\hbox to\dimexpr\@tempdima+1em{##}\cr
                    \rightarrowfill\cr\noalign{\kern.5ex}
                    \rightarrowfill\cr\noalign{\kern.5ex}
                    \rightarrowfill\cr}}}\limits^{\!#1}}}
\makeatother

\newcommand{\supp}{\on{supp}}
\newcommand{\sbull}{{\scriptscriptstyle\bullet}}
\newcommand{\tH}{\on{H}^{\sbull\sbull}}

\newcommand{\Spec}{\on{Spec}}

\newcommand{\Sub}{\on{Sub}}
\newcommand{\reg}{{}_{\on{reg}}}

\newtheorem{theorem}{Theorem}[section]
\newtheorem{lemma}[theorem]{Lemma}
\newtheorem{proposition}[theorem]{Proposition}
\newtheorem{corollary}[theorem]{Corollary}

\newtheorem*{theorem*}{Theorem}

\theoremstyle{definition}
\newtheorem{construction}[theorem]{Construction}

\theoremstyle{remark}
\newtheorem{remark}[theorem]{Remark}

\newtheorem{observation}[theorem]{Observation}
\newtheorem{recollection}[theorem]{Recollection}
\newtheorem{notation}[theorem]{Notation}

\theoremstyle{definition}
\newtheorem{definition}[theorem]{Definition}

\makeatletter
\newcommand{\xhookdoubleheadrightarrow}[2][]{%
    \lhook\joinrel
    \ext@arrow 0359\rightarrowfill@ {#1}{#2}%
    \mathrel{\mspace{-15mu}}\rightarrow
}
\makeatother

\AddToHook{env/corollary/begin}{\crefalias{theorem}{corollary}}
\AddToHook{env/proposition/begin}{\crefalias{theorem}{proposition}}
\AddToHook{env/lemma/begin}{\crefalias{theorem}{lemma}}
\AddToHook{env/definition/begin}{\crefalias{theorem}{definition}}
\AddToHook{env/example/begin}{\crefalias{theorem}{example}}
\AddToHook{env/remark/begin}{\crefalias{theorem}{remark}}
\AddToHook{env/observation/begin}{\crefalias{theorem}{observation}}
\AddToHook{env/construction/begin}{\crefalias{theorem}{construction}}
\AddToHook{env/recollection/begin}{\crefalias{theorem}{recollection}}
\AddToHook{env/notation/begin}{\crefalias{theorem}{notation}}

\begin{document}
    \title{Permutation twisted cohomology, reduced and immersed}
    \author{Juan Omar G\'omez}
    \author{Sam K. Miller}
    
    \makeatletter
    \patchcmd{\@setaddresses}{\indent}{\noindent}{}{}
    \patchcmd{\@setaddresses}{\indent}{\noindent}{}{}
    \patchcmd{\@setaddresses}{\indent}{\noindent}{}{}
    \patchcmd{\@setaddresses}{\indent}{\noindent}{}{}
    \makeatother
    
    \address{Juan Omar G\'omez, Fakultat f\"ur Mathematik, Universit\"at Bielefeld, D-33501 Bielefeld, Germany}
    \email{jgomez@math.uni-bielefeld.de}
    \urladdr{https://sites.google.com/cimat.mx/juanomargomez/home}
    
    \address{Sam K. Miller, Department of Mathematics, University of Georgia, Athens GA 30602, United States of America}
    \email{sam.miller@uga.edu}
    \urladdr{https://www.samkmiller.com/}
    \subjclass[2020]{18G80; 18G90, 18M05, 20C20} 
    \keywords{}
    \begin{abstract}
          In this paper, we introduce a `reduced' form of permutation twisted cohomology as introduced by Balmer--Gallauer and further developed by Miller. This ring is bigraded by a well-chosen choice of endotrivial complex, rather than multigraded by a family of endotrivial complexes. The Balmer spectrum of the small derived category of permutation modules openly immerses into the homogeneous spectrum of this ring. We also show that for $p$-groups, Miller's `remixed' twisted cohomology ring also admits the Balmer spectrum as an open subspace, and confirm in both cases it can be upgraded to have Dirac scheme structure; furthermore, we prove that the two open covers witnessing this are in general different. 
    \end{abstract}
    
    \maketitle

  \section{Introduction}
  
    Let $G$ be a finite elementary abelian $p$-group and $k$ a field of prime characteristic $p$. To deduce the topology of the Balmer spectrum of the derived category of permutation modules $\catK(G)$, Balmer--Gallauer \cite{BG25} constructed a permutation twisted cohomology ring whose homogeneous spectrum $\Spc(\catK(G))$ openly immerses into via a twisted comparison map, and locally restricts to a homeomorphism with respect to a quasi-compact open cover of $\Spc(\catK(G))$. Miller \cite{Mil25c} `remixed' this construction to all $p$-groups, but only proved that the comparison map is injective, with immersion conditional on Noetherianity of the remixed twisted cohomology ring. In this work, we introduce a `reduced' form of permutation twisted cohomology by twisting the endomorphism ring of the monoidal unit by a well-chosen choice of endotrivial complex, and show that in this generality, the Balmer spectrum openly immerses into the homogeneous spectrum of this ring; see \Cref{thm:open_immersion_subring} and \Cref{coro-dirac-scheme}. 

    \begin{theorem}\label{thm-intro}
        Let $G$ be a finite group, and $\tH(G)$ be the reduced twisted cohomology ring (see \Cref{def-red-tcring}). Then  the comparison map 
       \[\comp_G\colon \Spc(\catK(G)) \to \Spec^h(\tH(G))\] 
       is an open immersion. Moreover, there is an open cover $\{U(H)\}_{H\in \Sub_p(G)}$ of $\Spc(\catK(G))$ for which the comparison map restricts to a homeomorphism. In particular, $\Spc(\catK(G))$ is a Dirac scheme.
    \end{theorem}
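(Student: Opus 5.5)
The plan is to reduce to local statements on an open cover, following the template of Balmer--Gallauer's elementary abelian argument and Miller's remixed version, but with one well-chosen twist. First I would fix the invertible object defining $\tH(G)$: an endotrivial complex $u$ whose restriction to each $p$-subgroup $H$ detects exactly the ``$H$-fixed point'' piece. A natural candidate is a product $u=\bigotimes_{H}u_H$ of the Bousfield-type endotrivial complexes attached to $p$-subgroups, modulo conjugacy. Then $\tH(G)=\bigoplus_{n\in\bbZ,\,i}\Hom_{\catK(G)}(\bbone,u^{\otimes n}[i])$. The comparison map is the usual one, $\comp_G(\catP)=\langle f \text{ homogeneous} : \cone(f)\notin\catP\rangle$. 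By Balmer's general theory it is continuous, and it is spectral for graded-commutative rings twisted by an invertible object.

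Next, for each $p$-subgroup $H$ I would pick a homogeneous element $a_H\in\tH(G)$ of nonzero $u$-degree: the ``Euler class'' type map $\bbone\to u_H$ (or a suitable power), tensored with appropriate units so that it lands in a power of $u$. I would define $U(H)=\{\catP : \cone(a_H)\notin\catP\}=\comp_G^{-1}(D(a_H))$. These sets cover $\Spc(\catK(G))$, since every prime lies over some $p$-subgroup via the modular fixed points (Brauer quotient) functors, and for the relevant $H$ the class $a_H$ is invertible after passing to $\Psi^H$. Each $U(H)$ is quasi-compact open.

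The main step is to show that $\comp_G$ restricts to a homeomorphism $U(H)\cong D(a_H)$. I would identify the localization $\catK(G)[a_H^{-1}]$, i.e.\ the Verdier quotient by $\cone(a_H)$, with a derived category of the Weyl group: it should be $\catD_b(k(\Weyl{G}{H}))$-like, as in Balmer--Gallauer. Then $U(H)\cong\Spc$ of that quotient, and by Benson--Carlson--Rickard/Balmer this is $\Spec^h$ of the cohomology of $\Weyl{G}{H}$ (or of $N_G(H)/H$). On the ring side, $\tH(G)[a_H^{-1}]_0$, the degree-zero part in the $u$-grading, should be computed as the graded endomorphism ring of the unit in the localized category. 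This is the general statement that inverting a homogeneous element of the twisted ring corresponds to inverting the map in the category. So $D(a_H)\cong\Spec^h$ of that ring, and the local comparison map becomes the classical Balmer comparison for the quotient, which is a homeomorphism because that quotient is a derived category of modules with Noetherian cohomology.

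I expect the main obstacle to be exactly this identification, $\tH(G)[a_H^{-1}]\cong$ the graded endomorphism ring of the unit in $\catK(G)/\langle\cone(a_H)\rangle$, including the claim that a single twist $u$ suffices. One needs every invertible $u_K$ appearing in Miller's multigraded ring to become, after inverting $a_H$, isomorphic to a shift of a power of $u$. I would first try to prove it on $\Psi^H$, where the relevant endotrivial complexes become shifts of $\bbone$. Granted the local homeomorphisms, injectivity together with openness gives the open immersion: images of the $U(H)$ are open $D(a_H)$, and on overlaps the maps agree. Continuity and openness then follow formally. The Dirac scheme structure is obtained by gluing the structure sheaves of the $\Spec^h$ of the localized rings along $D(a_H)$, which is automatic once $\comp_G$ is an open immersion.
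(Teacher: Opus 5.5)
\paragraph{The local step fails.} Your reduction to an open cover by $\comp_G^{-1}(D(a_H))$ is the right shape, but the local homeomorphism rests on a false identification. Only for $H=1$ is the localization a module category: $U_G(1)$ is the cohomological open, $\catL_G(1)\simeq\on{D}_b(kG)$ and $\calO^\sbull_G(1)\cong\on{H}^\sbull(G;k)$.

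For $H\neq 1$, $U(H)$ is an open neighbourhood of the closed point $\catM(H)$. It therefore contains every generalization of $\catM(H)$, including primes $\catP(K,\frakp)$ with $K\neq_G H$; the stratum $\im\check\psi^H\cong V_{\Weyl{G}{H}}$ is not open. Already for $G=C_p$ and $H=G$, the open $U(G)$ consists of $\catM(G)$ together with the generic point of the cohomological open, while $V_{\Weyl{G}{G}}=V_1$ is a single point.

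So $\catK(G)[a_H^{-1}]$ is not ``$\on{D}_b(k(\Weyl{G}{H}))$-like'', and $U(H)\not\cong\Spec^h(\on{H}^\sbull(\Weyl{G}{H}))$. There is also no classical homeomorphism to quote for $\comp_{\catL(H)}$: Noetherianity or coherence of $\calO^\sbull_G(H)$ is not known, and this is exactly the point on which Miller's earlier immersion result was conditional.

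The things you flag as the main obstacle are not the real ones. That central localization computes the graded endomorphisms of the unit is standard (Balmer, Dell'Ambrogio--Stevenson), and no comparison with Miller's multigrading is ever needed. Note also that the theorem is about the specific twist $x\reg=\catB(\bbR G^{\oplus 2'})$, not a twist of your choosing. An ad hoc product of ``Bousfield-type'' complexes need not be orientable mod $p$ when $G$ is not a $p$-group, and then the forerunner-type classes $a_H$ you want need not exist.

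\paragraph{What the paper uses instead.} Two ingredients are missing from your proposal.

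First, injectivity is proved directly, through a finitely generated subalgebra $\on{R}^{\sbull\sbull}(G)\subseteq\tH(G)$ containing all $\iota(H)$.
\begin{enumerate}
\item Distinct strata are separated by forerunners. If $H\neq_G K$ then, after possibly swapping, $\Psi^K(\iota(H))=0$. Hence $\open(\iota(H))$ contains $\im\check\psi^H$ but misses $\im\check\psi^K$.
\item Within the stratum over $H$, lift finitely many generators of $\on{H}^\sbull(\Weyl{G}{H})$ along the surjection $\Psi^H\colon\tH(N_G(H))\to\tH(\Weyl{G}{H})$. Then lift them to $\tH(G)$ via the trace $\hat f=\tr^G_{N_G(H)}(f\otimes\tilde\iota^H_{\on{reg}})$. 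This makes $V_{\Weyl{G}{H}}\to\Spec^h(\on{R}^{\sbull\sbull}(G))$ injective.
\end{enumerate}

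Second, the appendix gives an abstract criterion. Suppose $\catK=\catT^c$ with $\catT$ tt-stratified, and some finitely generated graded subalgebra of $\catR^\sbull_\catK$ has injective comparison map. Then that map is a homeomorphism, by a Balmer-type surjectivity for coherent subalgebras plus the BIK local-to-global principle. Moreover, $\comp_\catK$ itself is a bijection, hence a homeomorphism.

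The paper applies this to $\catK(G)|_{U(H)}$, which is stratified because $\catD(G)$ is, with the subalgebra $(\on{R}^{\sbull\sbull}(G)[\iota(H)^{-1}])_{0\text{-twist}}\subseteq\calO^\sbull_G(H)$. This yields the local homeomorphisms without any knowledge of $\calO^\sbull_G(H)$ itself. Once those are in hand, your formal deduction of the open immersion and of the Dirac scheme structure goes through.
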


    We leverage the ideas used to prove the previous theorem to verify that Miller's comparison map is an open immersion independent of Noetherianity of the `remixed' twisted cohomology ring. This resolves a conjecture in \cite{Mil25c}; see \Cref{thm-immersion-Mgraded}. We also show that the open cover from \Cref{thm-intro} is, in general, finer than the one used by Miller; see \Cref{sec-covers}.

    We are making this preprint available in its current unfinished form, as we were made aware that an independent researcher used a generative AI model to possibly reproduce some of the results and constructions in this paper which were previously announced in a talk at the Hausdorff Institute for Mathematics and in one of the authors' research statement. We will update this preprint with more results, detail, and completed exposition at a later date, but sadly, we felt the need to distribute these results in a timely manner for both readability and attribution purposes. 

    \subsection*{Acknowledgments} The authors thank Dave Benson, Henning Krause, and Martin Gallauer for numerous enlightening discussions and comments on earlier versions of this manuscript. This project was initiated when Miller was visiting Universität Bielefeld, and he thanks the department of mathematics for their hospitality. G\'omez was supported by the Deutsche Forschungsgemeinschaft (Project-ID 491392403 – TRR 358). Miller was partially supported by an AMS-Simons travel grant. 

    \subsection*{Statement on generative AI use} No generative AI models were used at any stage in the creation of this paper; all writing and proofs are the authors' alone. However, the authors acknowledge interactions with generative AI tools at various points in other related projects which have affected the course of this project. For instance, during the review process of \cite{Mil25b}, ChatGPT 6 Pro discovered an incorrect lemma, which subsequently led to Miller producing a new more elegant argument for proving injectivity of the graded comparison map. A variation of this argument appears again here in \Cref{prop:lifting_lemma} and \Cref{thm:injectivity}. 
    
    \section{Permutation modules, endotrivials, and forerunners}
    
    We set $G$ a finite group, and $k$ a field of prime characteristic $p$. We assume that $p$ divides the order of $G$.
    
    \subsection{Permutation modules} 
     For a complete background on the tt-geometry of permutation modules, see \cite{BG25, BG23b} for a tt-geometric background and \cite{Las23} for a representation-theoretic background.

    \begin{definition}
        A $kG$-module $M$ is:
        \begin{enumerate}
            \item a permutation module if $M$ admits a $G$-stable basis, or equivalently, $M \cong kX$ for some $G$-set $X$;
            \item a $p$-permutation module  
            if $M$ is a direct summand of a permutation module, or equivalently, $\Res^G_S M$ is a permutation module for some Sylow $p$-subgroup $S$ of $G$.
        \end{enumerate}
    \end{definition}

    \begin{remark}
        In particular, if $G$ is a $p$-group, every $p$-permutation module is a permutation module. In this case, the indecomposable permutation modules are precisely those isomorphic to $kX$ for $X$ a transitive $G$-set. If $G$ is not a $p$-group, permutation modules need not be indecomposable.
    \end{remark}

    \begin{notation}
        We write  $\mathsf{perm}(G;k)$ to denote the additive category of finitely generated permutation modules, and $\mathsf{perm}(G;k)^\natural$ for its idempotent completion.
    \end{notation}

  \begin{remark}
      The category $\mathsf{perm}(G;k)^\natural$ is precisely the  category of $p$-permutation modules. As noted above, if $G$ is a $p$-group, then $\mathsf{perm}(G;k)$ is already idempotent complete. 
  \end{remark}

  \begin{recollection}
    We set $\catK(G) \coloneqq  \on{K}_b(\mathsf{perm}(G;k)^\natural)$ and $\catD(G) \coloneqq  \on{DPerm}(G;k)$, where $\catD(G)$ is the localizing subcategory of $\mathsf{K}(\mathsf{Mod}(kG))$ generated by $\catK(G)$. Then $\catD(G)$ is a rigidly-compactly generated tt-category with rigid-compact part $\catK(G) = \catD(G)^c$. These tt-categories can be defined in the context of cohomological Mackey functors, Artin motives, or equivariant stable homotopy theory, and refer the reader to \cite{BG23b, F25} for details.   
  \end{recollection}

    \begin{recollection}
         By \cite{BG23a}, every finitely generated $kG$-module has a finite resolution by $p$-permutation modules, and consequently, one has a localization 
         \[
       \catK(G) \twoheadrightarrow \catK(G)/\catK_{\on{ac}}(G) \cong \on{D}_b(kG),
         \]
         where $\catK_{\on{ac}}(G)$ denotes the thick $\otimes$-ideal of $\catK(G)$ consisting of all acyclic complexes. We set $V_G \coloneqq  \Spc(\on{D}_b(kG))$. Then \cite[Proposition 3.22]{BG25} asserts that this functor induces an open inclusion $V_G \hookrightarrow \Spc(\catK(G))$. The subset $V_G \subseteq \Spc(\catK(G))$ is called the \textit{cohomological open}. However, the interesting part of $\catK(G)$ is its closed complement, $\supp(\catK_{\on{ac}})$.
    \end{recollection}

    Let $H \in \Sub_p(G)$. \emph{Brauer quotients}, due to Brou\'e \cite{Bro79}, are additive functors on $kG$-modules which mimic taking $H$-fixed points of $G$-sets on permutation modules. When restricted to $p$-permutation modules, these functors are monoidal. Balmer--Gallauer adapted this construction to work on the level of big categories, dubbing them \textit{modular fixed points} functors.

    \begin{theorem}{\cite[Proposition 2.7]{BG25}}
        For every $p$-subgroup $H \leq G$ there exists a coproduct-preserving tt-functor on the big derived category of permutation modules \[\Psi^H = \Psi^H_G\colon  \catD(G) \to \catD(\Weyl{G}{H})\] such that $\Psi^H(kX) \cong k(X^H)$ for every $G$-set $X$. In particular, this functor preserves compacts and restricts to a tt-functor $\Psi^H\colon  \catK(G) \to \catK(\Weyl{G}{H})$.
    \end{theorem}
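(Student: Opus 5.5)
The plan is to build $\Psi^H$ as a composite of tt-functors, each of which visibly preserves coproducts and compacts, and then compute its effect on $kX$ by tracking $kX$ through the steps. The decomposition is the standard one:
\[
\catD(G)\xrightarrow{\Res^G_{N_G(H)}}\catD(N_G(H))\xrightarrow{\ \widetilde{\Psi}\ }\catD(\Weyl{G}{H}).
\]
The second arrow is built from a "Brauer quotient'' on $N_G(H)$-permutation modules, followed by inflation/descent along $N_G(H)\to \Weyl{G}{H}$. Restriction is a coproduct-preserving tt-functor sending permutation modules to permutation modules, so all the content lies in the second step.

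First, I construct the functor on compacts. On $\mathsf{perm}(N_G(H);k)$, I define $kX\mapsto k(X^H)$, viewed as a $\Weyl{G}{H}$-module. On morphisms, I send a map $f\colon kX\to kY$, written as a matrix indexed by $Y\times X$, to the submatrix indexed by $Y^H\times X^H$. Equivalently, this is Brou\'e's Brauer quotient $M\mapsto M^H/\sum_{K<H}\tr_K^H M^K$ restricted to permutation modules. The key verification is that this assignment is functorial and additive, which is Brou\'e's classical computation. Concretely, the contribution of a non-fixed point $x\in X\setminus X^H$ to a composite lands in a transfer from a proper subgroup, and hence dies, because $p$ divides the orbit length of $x$. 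Monoidality follows from $(X\times Y)^H=X^H\times Y^H$. Since the construction is additive, it extends to idempotent completions and then to $\catK(N_G(H))\to\catK(\Weyl{G}{H})$ degreewise, and it commutes with cones. This gives a tt-functor on compacts with $kX\mapsto k(X^H)$; composing with restriction gives the same formula for $G$-sets $X$, since $\Res^G_{N_G(H)}$ does not change the underlying $H$-fixed points.

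Second, I extend to the big categories. Since $\catD(G)$ is compactly generated by $\catK(G)$, I take the unique coproduct-preserving extension: either by left Kan extension (Ind-extension on the level of $\infty$-categories, or a dg-enhancement), or more concretely by applying the additive functor degreewise to complexes of arbitrary, possibly infinite, permutation modules. The Brauer quotient commutes with arbitrary direct sums of permutation modules, because fixed points of $\bigsqcup X_i$ are $\bigsqcup X_i^H$. This extension preserves coproducts by construction and preserves compacts because it restricts to the functor above. The symmetric monoidal structure extends because $\otimes$ preserves coproducts in each variable and the generators are closed under tensor. Finally, "preserves compacts'' together with the formula on generators gives the restriction $\Psi^H\colon\catK(G)\to\catK(\Weyl{G}{H})$.

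I expect the main obstacle to be the extension to $\catD(G)$, as a localizing subcategory of $\mathsf{K}(\mathsf{Mod}(kG))$, in a way that is genuinely well defined and monoidal. Applying the functor degreewise to infinite complexes requires checking two things: that it sends the localizing subcategory into $\catD(\Weyl{G}{H})$, and that it is exact with respect to the triangulated structure. My first attempt would be to present $\catD(G)$ as $\mathsf{K}(\mathsf{Perm}(G;k))$-style homotopy categories of complexes of (infinite) $p$-permutation modules, which should agree with the localizing subcategory via the compact generation. On such a model the degreewise Brauer quotient is manifestly triangulated and monoidal.
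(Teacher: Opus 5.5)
The paper gives no argument of its own here; it quotes \cite[Proposition 2.7]{BG25}. Your construction is the standard one: Brou\'e's Brauer quotient on permutation modules, applied degreewise and precomposed with $\Res^G_{N_G(H)}$. Everything you say at the level of $\mathsf{perm}$ and $\catK$ is fine. The weak point is your last paragraph. There the extension to $\catD(G)$ is left open, and the model you propose for it is wrong.

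$\catD(G)$ does \emph{not} in general agree with the homotopy category $\mathsf{K}(\mathsf{Perm}(G;k)^\natural)$ of all complexes of $p$-permutation modules, and compact generation does not make them agree. The latter can contain non-contractible complexes $Y$ with every $Y^K$ acyclic. Since $\Hom(k(G/K),Y[n])\cong H_{-n}(Y^K)=0$, such a $Y$ lies in $\catK(G)^{\perp}$, so it cannot lie in $\catD(G)$.

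Here is an example. Take $G=C_3=\langle g\rangle$, $p=3$, and write $\nu=1+g+g^2=(g-1)^2$ and $\varepsilon$ for the augmentation. Consider the $1$-periodic complex with all terms $kC_3\oplus k$ and differential $\begin{pmatrix} g-1 & \nu\\ -\varepsilon & 0\end{pmatrix}$.
\begin{itemize}
\item It squares to zero.
\item It is acyclic.
\item Its $C_3$-fixed points are acyclic: the fixed-point differential is $\begin{pmatrix} 0&1\\0&0\end{pmatrix}$ on $k\nu\oplus k$.
\item Its cycles form the $2$-dimensional uniserial $kC_3$-module. That module is not a summand of $kC_3\oplus k$, so the complex is not contractible.
\end{itemize}

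You only need the inclusion, and with it both checks you list become formal.
\begin{itemize}
\item Complexes of $p$-permutation modules, up to homotopy equivalence, form a triangulated subcategory of $\mathsf{K}(\mathsf{Mod}(kG))$. It is closed under arbitrary coproducts and contains $\catK(G)$, so it contains $\catD(G)$.
\item The Brauer quotient is an additive functor on all modules and commutes with arbitrary direct sums. Applied degreewise, it therefore gives an exact, coproduct-preserving functor $\mathsf{K}(\mathsf{Mod}(kN_G(H)))\to \mathsf{K}(\mathsf{Mod}(k(\Weyl{G}{H})))$.
\item The preimage of $\catD(\Weyl{G}{H})$ under this functor is a localizing subcategory. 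It contains $\mathsf{perm}(N_G(H);k)$, because $k(X)\mapsto k(X^H)$. Hence it contains $\catD(N_G(H))$.
\item The same argument applies to $\Res^G_{N_G(H)}$.
\item Every object of $\catD(N_G(H))$ is represented by a complex of $p$-permutation modules. On such complexes the degreewise functor is strong symmetric monoidal, by your computation $(X\times Y)^H=X^H\times Y^H$.
\end{itemize}
This gives the coproduct-preserving tt-functor directly, with no Kan extension or enhancement needed.
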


    \begin{theorem}{\cite[Theorem 2.11]{BG25}}\label{thm:conservative}
        The family of functors \[\{\catD(G) \xrightarrow{\Psi^H} \catD(\Weyl{G}{H}) \twoheadrightarrow \on{K}(\on{Inj}(k(\Weyl{G}{H})))\}_{H \in \Sub_p(G)/G}\] indexed by conjugacy classes of $p$-subgroups $H \leq G$ is conservative, i.e., detects vanishing of objects. This restricts to a conservative family of functors  \[\{\catK(G) \xrightarrow{\Psi^H} \catK(\Weyl{G}{H}) \twoheadrightarrow \on{D}_b(k(\Weyl{G}{H}))\}_{H \in \Sub_p(G)/G}\] on the compact parts.
    \end{theorem}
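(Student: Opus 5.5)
The statement to prove is \Cref{thm:conservative}, cited from \cite[Theorem 2.11]{BG25}. My plan is to prove the compact statement first and then bootstrap to the big category.

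\textbf{Compact case.} I would argue by induction on the order of $G$ (or of a Sylow $p$-subgroup), using the localization $\catK(G)\twoheadrightarrow \on{D}_b(kG)$ with kernel $\catK_{\on{ac}}(G)$. Let $x\in\catK(G)$ be such that its image in $\on{D}_b(k(\Weyl{G}{H}))$ under $\Psi^H$ vanishes for every $H$.
\begin{enumerate}
\item Taking $H=1$ shows $x$ is acyclic, i.e. $x\in\catK_{\on{ac}}(G)$.
\item The key input is a standard fact about bounded complexes of $p$-permutation modules: an acyclic complex $x$ whose Brauer quotients $x(H)=\Psi^H(x)$ are acyclic for all nontrivial $p$-subgroups $H$ is contractible. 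This is the Bouc/Rickard-type criterion. A complex of $p$-permutation modules is homotopy equivalent to zero iff all its Brauer quotients are acyclic, and one proves it by restricting to a Sylow subgroup and splitting off summands.
\item To apply the criterion, I need each $\Psi^H(x)$ to be acyclic, not merely to vanish after some further localization. Here I use induction and transitivity $\Psi^{K/H}\circ\Psi^H\cong\Psi^K$ for $H\trianglelefteq K$. Applying the inductive hypothesis to $\Psi^H(x)\in\catK(\Weyl{G}{H})$ with $H\neq1$ shows $\Psi^H(x)=0$ in $\catK(\Weyl{G}{H})$. Its further Brauer quotients are those of $x$ at subgroups containing $H$, modulo $H$, up to conjugacy in the normalizer.
\item The base case $\Weyl{G}{H}$ of $p'$-order is trivial, since there $\catK=\on{D}_b$.
\end{enumerate}
Step 2, the contractibility criterion, is the main obstacle. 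My approach would be to reduce it to showing that the top nonzero differential of $x$ is a split surjection. Surjectivity follows from acyclicity. For splitting, I would use that a surjection $f\colon M\to N$ of $p$-permutation modules with all $f(H)$ surjective is split: verify it on each indecomposable summand $k[G/P]$ of $N$ via the Brauer quotient at $P$ together with Green correspondence. Acyclicity of each $x(H)$ gives exactly the surjectivity of all $f(H)$. Removing the split piece and iterating on length then finishes.

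\textbf{Big case.} Here $\catD(G)$ is compactly generated by $\catK(G)$, and the targets $\on{K}(\on{Inj})$ are compactly generated by $\on{D}_b$. The big statement would follow from a tensor-ideal argument. Suppose $t\in\catD(G)$ is killed by all the functors. Then for each compact $c$, the object $t\otimes c$ is killed as well, and one shows $\Hom(c,t)=0$. To do this I would use the big analogue of the dévissage above: a filtration of the unit by the idempotent triangles associated to the Thomason subsets $\supp(\catK_{\on{ac}})$ and their images. Each stratum is detected by $\Psi^H$ followed by the localization to $\on{K}(\on{Inj})$, and a finite induction over conjugacy classes of $p$-subgroups yields $t=0$. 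I expect the bookkeeping here to be routine once the compact case and the identification of the localizing kernels are in hand.
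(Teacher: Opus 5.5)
The paper gives no proof of its own here; it cites \cite[Theorem 2.11]{BG25}. Your compact-case argument is essentially sound. The gap is the big case, which is the actual content of the theorem and which your proposal does not prove.

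Note the logical direction. The compact statement follows from the big one, because $\catK(G)\to\on{D}_b(k(\Weyl{G}{H}))\to\on{K}(\on{Inj}(k(\Weyl{G}{H})))$ is the restriction of the big functor. The converse fails in general: conservativity on compact objects does not propagate to the big category. For instance, on $\on{D}(\bbZ)$ the functors $-\otimes^{\mathbf L}\bbF_\ell$, for $\ell$ prime, detect vanishing of perfect complexes, yet they all kill $\mathbb{Q}$. Your paragraph on $\catD(G)$ does not bridge this:
\begin{enumerate}
\item the claim that ``each stratum is detected by $\Psi^H$ followed by the localization to $\on{K}(\on{Inj})$'' is a restatement of what must be proved;
\item the ``identification of the localizing kernels'' is assumed, not established;
\item the compact case cannot be fed in, since $t\otimes c$ is not compact.
\end{enumerate}

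What is missing is an argument carried out in $\catD(G)$ from the start. Its key input, which needs its own proof, is that for a $p$-group $G$,
\[\ker(\Psi^G\colon\catD(G)\to\on{D}(k))=\on{Loc}\langle k(G/K)\mid K<G\rangle,\]
i.e.\ $\Psi^G$ is the finite localization away from the proper orbits. Given this, an isotropy-separation induction works:
\begin{enumerate}
\item Reduce to a Sylow subgroup $S$. Here $t$ is a summand of $t\otimes k(G/S)$, and $\check\Psi^L_S\circ\Res^G_S\cong\Res\circ\check\Psi^L_G$.
\item For a $p$-group, induct on $|G|$ using the same compatibility. This gives $\Res^G_K t=0$, hence $t\otimes k(G/K)=0$, for all $K<G$.
\item Since $\Weyl{G}{G}=1$, the hypothesis at $H=G$ says $\Psi^G(t)=0$. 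So $t$ lies in the smashing ideal $\on{Loc}\langle k(G/K)\mid K<G\rangle$; with $e$ its idempotent, $t\cong e\otimes t$.
\item But $e$ lies in the localizing ideal generated by the $k(G/K)$, all of which are annihilated by $t\otimes-$. Hence $t=0$.
\end{enumerate}

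On the compact case, two remarks. First, $\catK(\Weyl{G}{H})\twoheadrightarrow\on{D}_b(k(\Weyl{G}{H}))$ is the Verdier quotient by the acyclic complexes. So the hypothesis $\check\Psi^H(x)=0$ already says that every Brauer-quotient complex $x(H)$ is acyclic. Your Step 3 and its induction are therefore superfluous, and the statement is exactly the contractibility criterion you cite.

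Second, in your sketch of that criterion, take $N$ indecomposable with vertex $P$. Lifting a section of $f(P)$ from $N(P)$ to $N$ does not come from Green correspondence. It comes from two facts: $N$ is a summand of $k(G/P)$, and the Brauer map $\Hom_{kG}(k(G/P),M)=M^P\to M(P)$ is surjective. Locality of $\End_{kG}(N)$ then gives the splitting. Also, for non-$p$-groups the indecomposable summands of $N$ need not be of the form $k(G/P)$.
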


    \begin{notation}
        Each tt-functor $\Psi^H$ induces a continuous map on spectra \[\Spc(\Psi^H) \coloneqq  \psi^H\colon  \Spc(\catK(\Weyl{G}{H})) \to \Spc(\catK(G)),\] and composing with the surjection $\check{\Psi}^H\colon  \catK(G) \twoheadrightarrow \on{D}_b(k(\Weyl{G}{H}))$ induces another continuous map on spectra \[\check{\psi}^H\colon  V_{\Weyl{G}{H}} \to \Spc(\catK(G)).\]
    \end{notation}

    \begin{theorem}{\cite[Theorem 2.10]{BG25}}
        Every point of $\Spc(\catK(G))$ is the image $\check\psi^H(\frakp)$ of a point $\frakp \in V_{\Weyl{G}{H}}$ for some $p$-subgroup $H \leq G$ unique up to $G$-conjugation, i.e., $\check\psi^H(\frakp) = \check\psi^{H'}(\frakp')$ if and only if there exists a $g \in G$ such that ${}^gH = H'$ and ${}^g\frakp = \frakp'$. In other words, \[\Spc(\catK(G)) = \bigsqcup_{H \in \Sub_p(G)/G} \Spc(\on{D}_b(k(\Weyl{G}{H}))) \cong \bigsqcup_{H \in \Sub_p(G)/G} \Spec^h(\on{H}^\sbull(\Weyl{G}{H};k)).\]
    \end{theorem}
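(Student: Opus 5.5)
The plan is to build a map from the disjoint union $\bigsqcup_{H \in \Sub_p(G)/G} V_{\Weyl{G}{H}}$ to $\Spc(\catK(G))$ via the maps $\check\psi^H$, and to show separately that it is surjective and that its fibers are exactly the stated conjugation orbits. The last isomorphism in the display, $\Spc(\on{D}_b(k(\Weyl{G}{H}))) \cong \Spec^h(\on{H}^\sbull(\Weyl{G}{H};k))$, is Benson--Carlson--Rickard/Balmer's computation of the spectrum of the derived category of a finite group, which I will simply invoke.

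\textbf{Surjectivity.} Let $\frakp \in \Spc(\catK(G))$. I first choose a local object $x \notin \frakp$, i.e.\ one detecting $\frakp$ as a "closed point after localization". By \Cref{thm:conservative}, the family $\check\Psi^H$ is conservative on $\catK(G)$. Standard tt-geometry (Balmer's result that a jointly conservative family of tt-functors into local categories, or more precisely one detecting $\otimes$-nilpotence, gives jointly surjective maps on spectra) then implies that the $\check\psi^H$ are jointly surjective. The catch is that conservativity is weaker than nilpotence detection. To bridge this I would use that each $\Psi^H$ is defined on the big categories and preserves coproducts. The big-category version of the conservative family then detects nilpotence of maps out of compacts, via Balmer's "surjectivity from conservativity for big categories" (a coproduct-preserving conservative family on rigidly-compactly generated categories yields surjectivity on spectra). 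This is the step I expect to be the main obstacle, and I would rely on that criterion for the big categories $\on{K}(\on{Inj})$.

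\textbf{Injectivity up to conjugation.} Suppose $\check\psi^H(\frakp) = \check\psi^{H'}(\frakp')$.
\begin{enumerate}
\item I will separate the strata by support. For each $H$, consider the object $k(G/H)$ and the Koszul-type complexes: $\Psi^K(k(G/H)) = k((G/H)^K)$ vanishes unless $K$ is subconjugate to $H$.
\item Using the acyclic ideal in each quotient, this shows that the image of $\check\psi^H$ lies in $\supp(k(G/H))$ minus $\bigcup_{K > H}\supp(\cdots)$. More precisely, $\check\psi^H(V_{\Weyl{G}{H}})$ is the locally closed set where $\Psi^H$ is "cohomological", namely where the acyclic objects $\Psi^H$-killed vanish.
\item From this, $H$ and $H'$ are conjugate.
\end{enumerate}
Conjugating, I may assume $H = H'$ and compare points inside $V_{\Weyl{G}{H}}$. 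The functor $\check\Psi^H$ factors through restriction to $N_G(H)$, and $\catK(G) \to \catK(N_G(H))$ is a finite étale-type extension by $k(G/N_G(H))$. Its spectrum map identifies points exactly up to $G$-conjugation, by the usual separable-extension (Balmer's "$\Spc$ of $A$-modules is a quotient by $G$") argument. The result is $\frakp' = {}^g\frakp$ with $g \in N_G(H)$.

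Conversely, conjugation by $g$ is an autoequivalence compatible with $\Psi^H$ and $\Psi^{{}^gH}$, which gives the "if" direction immediately.
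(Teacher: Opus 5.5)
The paper does not prove this statement; it quotes it from \cite{BG25}, so your plan has to stand on its own. The surjectivity half is the standard argument and is fine once you drop the superfluous ``local object'' step. For $f\colon\bbone\to y$ with $y$ compact, $f$ is $\otimes$-nilpotent if and only if the telescope $\on{hocolim}_n\, y^{\otimes n}$ along $f\otimes y^{\otimes n}$ vanishes, by compactness of $\bbone$. Coproduct-preserving tt-functors preserve this telescope, so the conservative family of \Cref{thm:conservative} jointly detects $\otimes$-nilpotence in $\catK(G)$, and Balmer's criterion applied to the finite family $\{\check\Psi^H\}$ gives joint surjectivity.

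The genuine gap is the separation of strata. Your claim that $\im\check\psi^H\subseteq\supp(k(G/H))$ is false. Indeed, $\check\Psi^H(k(G/H))=k(N_G(H)/H)$ is a free $k(\Weyl{G}{H})$-module, supported only at the irrelevant ideal $\frakm$, so $k(G/H)\in\catP(H,\frakp)$ for every $\frakp\neq\frakm$. For example, with $G=C_p$ and $H=1$, the non-closed point $\catP(1,\frakp)$ is the preimage of $\on{K}_b(\on{proj})$ and so contains $kC_p$. Thus $k(G/H)$ only sees the closed point $\catM(H)$ of its stratum. Your ``more precise'' description fails too. $\Psi^H$ is not a localization, and primes of different strata contain one another: $\catP(H,\frakp)\subseteq\catP(K,\frakq)$ occurs for $H>_G K$. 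Already for $G=C_p$ one has $\ker\check\Psi^{C_p}=\catM(C_p)\subseteq\catP(1,\frakp)$, since otherwise $\catM(C_p)$ would be clopen in a connected spectrum. So the set of primes containing $\ker\check\Psi^H$ is strictly larger than $\im\check\psi^H$, and your conclusion that $H$ and $H'$ are conjugate has nothing to rest on.

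What is missing, for non-conjugate $H,K$ with $|K|\le|H|$, is an object lying in every $\catP(K,\frakq)$ and in no $\catP(H,\frakp)$. Equivalently, it must be killed by $\check\Psi^K$ while its $\check\Psi^H$-image has full support on $V_{\Weyl{G}{H}}$. For $K<_G H$ no $k(G/L)$ can play this role, since $\Psi^K(k(G/L))=0$ forces $\Psi^H(k(G/L))=0$. You need Koszul-type complexes: cones of global sections of endotrivials that become invertible under $\Psi^K$ but vanish under $\Psi^H$. For instance, $\cone(\iota^K_{x\reg})$ works. Here $\Psi^K(\iota^K_{x\reg})$ is a quasi-isomorphism, while $\check\Psi^H(\iota^K_{x\reg})\colon k\to k[2'([G:H]-[G:K])]$ is zero: by the negative shift if $|K|<|H|$, and by \Cref{prop:forerunner_props}(c) if $|K|=|H|$. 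Hence $\check\Psi^H$ of the cone is $k[1]\oplus k[2'([G:H]-[G:K])]$; this is exactly the mechanism in the proof of \Cref{thm:injectivity}.

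Your $H=H'$ step is also too loose. For non-normal $N=N_G(H)$, the fibres of $\Spc(\Res^G_N)$ are not $G$-orbits. They are governed by Balmer's coequalizer over the double cosets $NgN$, through $\catK(N\cap{}^gN)$. Excluding identifications there again needs stratum separation for $N$, plus compatibility of $\Psi^H$ with restriction. Note that $N_G(H)$ acts trivially on $V_{\Weyl{G}{H}}$, because inner conjugation is tt-isomorphic to the identity. So what must be shown is just that $\check\psi^H$ is injective. For $H\trianglelefteq G$ this is immediate from $\Psi^H\circ\Inf^G_{G/H}\cong\id$ and the open inclusion $V_{G/H}\hookrightarrow\Spc(\catK(G/H))$.
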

    
    \begin{notation}\label{not:primes}
        We denote by $\catP(H,\frakp) = \catP_G(H, \frakp) \in \Spc(\catK(G))$ the image of the prime $\frakp \in V_{\Weyl{G}{H}}$ under $\check{\psi}^H: V_{\Weyl{G}{H}} \to \Spc(\catK(G))$. By the previous theorem, every prime is up to $G$-conjugacy uniquely expressible in this way.
    \end{notation}

    \begin{remark}
    The subset $\im \check\psi^H=\{\catP(H, \frakp) \mid \frakp \in V_{\Weyl{G}{H}}\} \subseteq \Spc(\catK(G))$ is generally not open nor closed when $H > 1$, but contains the `very closed point' $\catP(H,0) \in \Spc(\catK(G))$. Explicitly, one may have inclusion of primes $\catP(H, \frakp) \subseteq  \catP(K, \frakq)$ when $H >_G K$; see \cite[Corollary 7.13]{BG25}.     
    \end{remark}

    \subsection{Endotrivials} Recall an \emph{endotrivial complex} is an invertible object of $\catK(G)$, i.e., a chain complex $x \in \catK(G)$ for which $x^* \otimes x = \mathsf{hom}(x,x) \simeq \bbone$. The Picard group $\Pic(\catK(G))$ is the group of endotrivials under tensor product. 

    \begin{notation}
        We denote the set of $p$-valued superclass functions by $\on{CF}(G,p)$; i.e., functions $\Sub_p(G)/G \to \bbZ$. 
    \end{notation}
    
    \begin{recollection}
        Given an endotrivial $x \in \Pic(\catK(G))$, we construct a $p$-valued superclass function $h_x\in \on{CF}(G,p)$ as follows: for $H \in \Sub_p(G)$, $h_x(H)$ is the unique homological degree for which $\Psi^H(x)$ is non-exact. The values $h_x$ are termed the \emph{h-marks} of $x$; this can be thought of as a virtual `dimension function' of $x$. 

        One obtains a group homomorphism \[h\colon \Pic(\catK(G)) \to \on{CF}_b(G,p), \quad x \mapsto h_x,\] which we term the \emph{h-mark homomorphism}. 
    \end{recollection}
    
    \begin{remark}
        The Picard group $\Pic(\catK(G))$ was determined for any finite group by Miller \cite{Mil25b}, although parts of the classification had previously been determined by Bachmann a decade prior \cite{Bac16} in motivic context. It turns out that the h-mark functions are precisely those $p$-valued superclass functions which satisfy the \emph{Borel--Smith conditions} on $p$-subgroups of $G$; see \cite[Section: Borel--Smith functions]{tD87} or \cite{DH81} for their explicit definition. For the context of this paper, we will never require the formal definition, so we omit it for brevity. 
    \end{remark}
    
    \begin{theorem}
        We have a split exact sequence \[0 \to \Hom(G,k^\times) \to \Pic(\catK(G)) \xrightarrow{h} \on{CF}_b(G,p)\to 0,\] where $\on{CF}_b(G,p)$ denotes the subgroup of Borel--Smith superclass functions.
    \end{theorem}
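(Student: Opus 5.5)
The plan is to prove the three claims separately: the kernel of $h$ is $\Hom(G,k^\times)$, the image of $h$ is exactly $\on{CF}_b(G,p)$, and the sequence splits. Splitting is the easy part. $\on{CF}_b(G,p)$ is a subgroup of the free abelian group $\on{CF}(G,p)\cong \bbZ^{|\Sub_p(G)/G|}$, so it is free, and any surjection onto it splits. It therefore suffices to prove exactness.

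\textbf{Left exactness and the kernel.} A character $\omega\in\Hom(G,k^\times)$ gives the one-dimensional module $k_\omega$. This is a $p$-permutation module: its restriction to a Sylow subgroup is trivial, because $k^\times$ has no $p$-torsion. Placed in degree $0$, $k_\omega$ is invertible with inverse $k_{\omega^{-1}}$. All its h-marks are $0$, since $\Psi^H(k_\omega)$ is again one-dimensional in degree $0$. The map $\omega\mapsto k_\omega$ is injective because $k_\omega\simeq k_{\omega'}$ in $\catK(G)$ forces an isomorphism of modules. For the reverse inclusion, take $x$ with $h_x=0$ and choose a homotopically minimal representative (no contractible summands). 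Let $n\neq 0$ be an extremal degree, say the top one, with $x_n\neq 0$. For every $H$, $\Psi^H(x)$ has homology only in degree $0$, so the top differential $\Psi^H(x_n)\to\Psi^H(x_{n-1})$ is split injective. I would then use the standard Brauer-quotient criterion: a map of $p$-permutation modules that is split injective after applying every $\Psi^H$ is itself split injective. This detection on all Brauer quotients is the module-level counterpart of \Cref{thm:conservative}. It would produce a contractible summand, contradicting minimality. Hence $x$ is a module $M$ in degree $0$ with $M^*\otimes M\cong k$ as modules. This gives $\dim M=1$, so $M=k_\omega$.

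\textbf{Image contained in $\on{CF}_b(G,p)$.} The Borel--Smith conditions are conditions on subquotients $K\trianglelefteq H$ with $H/K$ of a few small types: $C_p$, $C_p\times C_p$, and the quaternion or cyclic cases when $p=2$. The plan is to reduce to these groups. Restriction to $H$ is compatible with h-marks, and so is $\Psi^K$, since $\Psi^{H/K}\circ\Psi^K\cong\Psi^H$ on $N_G(K)/K$. So for $\bar x=\Psi^K(\Res_H x)\in\Pic(\catK(H/K))$ we have $h_{\bar x}(L/K)=h_x(L)$. It then remains to check the conditions directly for endotrivial complexes over these small groups. This is a finite computation of homology dimensions, using mod-$p$ Smith-theory style Euler characteristic arguments on $\Psi$ applied to $x$.

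\textbf{Surjectivity, the main obstacle.} For a $p$-group $P$, the theorem of Bouc--Yalçın / tom Dieck states that $\on{CF}_b(P,p)$ is spanned by dimension functions $H\mapsto\dim_\bbR V^H$ of real representations $V$, or by explicit generators coming from relative syzygies. For each such generator I would construct an endotrivial complex: the cellular chains of the unit sphere of a $P$-CW structure on $V$, shifted, or a relative syzygy complex $k[P/Q]\to k$. Its h-marks recover the generator, because $\Psi^H$ of cellular chains are the chains of the $H$-fixed sphere. For general $G$ there are two possible routes. The first is tensor induction from a Sylow subgroup, which preserves invertibility but multiplies h-marks. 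The second, which I would try first, is to use that restriction $\Pic(\catK(G))\to\Pic(\catK(S))^{G\text{-stable}}$ together with Galois descent along $G/S$ realizes every $G$-stable Borel--Smith function. I expect this descent/gluing step for non-$p$-groups to be the hardest part of the argument.
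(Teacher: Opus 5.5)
The paper does not prove this theorem; it quotes it from Miller's classification \cite{Mil25b}. Judged on its own, your outline correctly handles three parts: the splitting, the kernel, and surjectivity for $p$-groups (tom Dieck's theorem plus cellular chains of representation spheres). There are two small slips. First, vanishing homology gives only that $\Psi^H(d_n)$ is \emph{injective}, so the criterion you need is ``injective on every Brauer quotient implies split injective''. Second, $k[P/Q]\to k$ is endotrivial only when $[P:Q]=2$.

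The real content of the theorem lies in two steps that the proposal does not establish. The first is the inclusion $h(\Pic(\catK(G)))\subseteq \on{CF}_b(G,p)$. Your reduction to sections $H/K$ via $\Res$ and $\Psi^K$ is fine. What remains is not a finite computation, since $\Pic(\catK(C_p\times C_p))$ is infinite, and Euler-characteristic arguments cannot finish it. The Lefschetz invariant $\sum_i(-1)^i[x_i]$ of an endotrivial complex over a $p$-group $P$ is a unit of the Burnside ring $B(P)$ with marks $(-1)^{h_x(H)}$, so such arguments only see $h_x$ modulo $2$. That recovers the $C_p$ and $C_4$ conditions. However, the $C_p\times C_p$ condition $f(1)-f(E)=\sum_{i=0}^{p}(f(E_i)-f(E))$ and the mod-$4$ condition for $Q_8$ are integral. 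For example, on $C_3\times C_3$ the function with $f(1)=2$ and $f=0$ elsewhere passes every parity test but violates the former. You need genuine input here. One option is an algebraic version of the localization argument behind Borel's formula in Smith theory, applied to complexes of permutation modules. Another is the Bouc--Yal\c{c}{\i}n description of Borel--Smith functions as the kernel of $B^*(P)\to D^\Omega(P)$, together with a proof that the Dade class attached to $h_x$ vanishes.

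The second gap is surjectivity when $G$ is not a $p$-group. You leave this open, and neither suggested route closes it. Representation spheres alone cannot work: as the paper remarks, their h-marks satisfy the Artin condition, while general elements of $\on{CF}_b(G,p)$ need not. Tensor induction preserves invertibility, but you give no reason its image (even with shifts) exhausts $\on{CF}_b(G,p)$; indeed $h_{\on{Ten}_S^G y}(1)=[G:S]\,h_y(1)$. The descent route only restates the goal. Since $S$ need not be normal, one must descend along the separable algebra $k(G/S)$. For $y\in\Pic(\catK(S))$ with $G$-invariant h-marks, isomorphisms $\Res_{S\cap {}^gS}({}^gy)\cong \Res_{S\cap{}^gS}(y)$ do exist, because $h$ is injective for $p$-groups. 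But they are determined only up to $k^\times$. You must show they can be chosen to satisfy the cocycle condition on triple intersections, i.e.\ that a potential $k^\times$-valued $2$-cocycle obstruction vanishes. This lifting problem is exactly the hard part of the classification. As it stands, the proposal proves exactness at $\Hom(G,k^\times)$ and at $\Pic(\catK(G))$, plus surjectivity for $p$-groups. It proves neither the containment in $\on{CF}_b(G,p)$ nor surjectivity for general $G$.
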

    
    \begin{remark}
        Given a representation sphere $S^V$ (i.e., the one point compactification of a real representation $V$) with finite $G$-CW structure induced from a triangulation of $S^V$ (given by \cite{Ill83}), its reduced cellular chain complex is endotrivial, and its h-mark function is precisely the dimension function of $S^V$. More formally, the reduced cellular chain complex $\tilde{C}((S^V)^?;k)$ is a bounded chain complex of finitely generated free $k\Gamma_G$-modules, where $\Gamma_G$ denotes the orbit category over $G$; evaluation at the trivial subgroup 1 produces a bounded chain complex of permutation $kG$-modules (where the $G$-action is induced from $\Aut_{\Gamma_G}(G/1) \cong G$). 
        
        Let $\on{RO}^+(G)\subseteq \on{RO}(G)$ denote the submonoid of representations (as opposed to virtual representations). The assignment \[\on{RO}^+(G)\ni S^V \mapsto \tilde{C}((S^V)^1;k) \in \catK(G)\] induces a group homomorphism \[\catB\colon \on{RO}(G) \to \Pic(\catK(G)).\] Indeed, $\catB(S^{V\oplus W}) \cong \catB(S^{V} \wedge S^W) \cong \catB(S^V) \otimes \catB(S^W)$. Moreover, $\catB$ is surjective when $G$ is a $p$-group by \cite{DH81}, using that $h\colon \Pic(\catK(G)) \to \on{CF}_b(G)$ is an isomorphism. When $G$ is non-nilpotent, $\catB$ is not necessarily surjective, as endotrivials in $\im(\catB)$ have h-marks satisfying the \emph{Artin condition} introduced by Bauer \cite{Bau89}; endotrivials in general need not satisfy this condition. Indeed, more generally any invertible genuine $G$-spectra has dimension function satisfying the Artin condition, and $\catB$ sends dimension functions to h-mark functions. 
        
    \end{remark}
    
    \begin{definition}\label{def-effective-orientable}
        We say any endotrivial which up to a degree shift satisfies $x \in \catB(\on{RO}^+(G))$ is \emph{effective}. These endotrivials are well-behaved structurally, and when $G$ is a $p$-group, $\Pic(\catK(G))$ has a canonical basis of effective endotrivials. See \cite{Mil25c} for an overview. 
        
        The following definition arises from tom Dieck--Petrie's study of homotopy representations \cite{tDP82}: a representation sphere is \emph{orientable} if for all $H \in \Sub(G)$ and $gH \in \Weyl{G}{H}$, the action of $gH$ on $S^{V^H}$ is orientation-preserving, or equivalently $\det(\rho_{V^H}(gH)) = 1$. 
        
        The following is motivated by the fact that we can only see $p$-locally in $\catK(G)$. We say a representation sphere $S^V$ is \emph{orientable mod $p$} if either $p = 2$ or for all $H \in \Sub_p(G)$ and $gH \in \Weyl{G}{H}$, $gH$ acts by orientation-preserving automorphisms on $S^{V^H}$. Equivalently, $\det(\rho_{V^H}(gH)) \equiv 1 \mod p$. 
        
        We say an effective endotrivial $x\in \Pic(\catK(G))$ is orientable if its associated representation sphere $S^V$ is orientable mod $p$. This is equivalent to the following holding: for all $H \in \Sub_p(G)$, there is an isomorphism $\check\Psi^H(x) \cong k[i]$ for some (varying) $i \in \bbZ$.
    \end{definition}

    \begin{remark}
        It is not hard to see that if $G$ is a $p$-group, all representation spheres and endotrivials are automatically orientable mod $p$. However, for even-order groups, representation spheres need not be orientable in general; in fact, most spheres arising from nontrivial irreducible representations $V$ of real type (i.e., $\bbC \otimes_\bbR V$ remains irreducible) are not orientable. The question of classifying orientable representation spheres in general seems to have not been studied classically; Miller considered the question recently in \cite{Mil26d} and was able to classify them for groups with normal Sylow 2-subgroups, but for a general finite group the question of determining orientable representation spheres appears difficult. 
    \end{remark}
    
    The following orientable effective endotrivial will be the main player of our story.

    \begin{definition}\label{def:reg_oriented_endotriv}
        We write $2' \coloneqq  1$ if $p = 2$ and $2' \coloneqq  2$ if $p > 2$. Define \[x\reg = x\reg(G) \coloneqq   \catB(\bbR G^{\oplus 2'}).\] 
         We call $x\reg(G)$ the \emph{regular (orientable) endotrivial} for $G$. We write $x\reg$ if the underlying group $G$ is clear.  
    \end{definition}

    \begin{remark}
       Let $H\leq G$ be a $p$-subgroup. It is an easy exercise to compute \[h_{x\reg(G)}(H) = 2'[G:H]. \]
    \end{remark}

   \begin{remark}
        Of course, there exist instances when the endotrivial $\catB(\bbR G)$ is already orientable for $p$ odd (for instance, if $|G|$ is odd), but we keep this convention for convenience. 
    At present, it does not seem easy to deduce when $\bbR G$ is orientable mod $p$ in full generality, but it seems that when $p$ is odd and $G$ has even order, $\bbR G$ is highly likely to be non-orientable.
   \end{remark}

    \subsection{Forerunners} We now recall the construction of the so-called \textit{Forerunner homomorphisms}. We begin by introducing some terminology.

    \begin{definition}
        A \emph{global section (of degree $s$)} of an object $x\in \catK(G)$ (in the sense of Gallauer \cite{Gal25}) is a homomorphism $\bbone \to x[s]$. 
    \end{definition}
    
    The following examples of global sections control much of the geometry of $\catK(G)$. These were constructed by Miller for $p$-groups in \cite{Mil25c} to generalize Balmer--Gallauer's $a_N, b_N$ homomorphisms in \cite[Section 12]{BG25}, then extended to all finite groups by us in \cite{GM26}. We note Balmer--Bouc--Dell'Ambrogio have also produced a purely algebraic construction, while ours relies on topological properties. 
    
    \begin{construction}{\cite[Construction 8.5]{GM26}}\label{cons:forerunners} 
    Let $S^V$ be a representation sphere orientable mod $p$, endowed with the structure of a finite $G$-CW-complex (by \cite{Ill83}, for instance by taking the barycentric subdivision of a triangulation), and let $H \leq G$ be a subgroup. Fix $x \coloneqq  \tilde{C}(S(V)^1;k)$. Observe we have an identification $\tilde{C}(S(V^H)^1;k) = \tilde{C}(S(V)^H;k)$. First, note that if $\dim_V(H)=n$, then for all $i > n$, $\tilde{C}_i(S(V)^1;k))$ contains no transitive permutation modules with isotropy subgroup containing $H$, since $S(V^H)$ is $n$-dimensional. 
        
        Next, in degree $n = \dim_V(H)$, all transitive permutation module summands of $\tilde{C}_n(S(V)^1;k)$ with isotropy subgroups containing up to conjugacy $H$ are in fact isomorphic to $k(G/K)$ for $K \geq H$ a maximal subgroup containing $H$ with respect to satisfying the equality $\dim_V(H) =\dim_V(K)$. Indeed, we have $V^H = V^K$ as vector spaces, hence all $H$-stable cells are $K$-stable but not $K'$-stable for any $K' > K$. 
        
        If $H = K$, i.e., $H$ satisfies $\dim_V(H) > \dim_V(H')$ for all $H' > H$, then we construct the forerunner at $H$ as follows. We have $\tilde{C}_n(S(V)^H;R) \cong k(N_G(H)/H)^{\oplus i}$ for some $i \in \bbN$. Moreover, since $S(V^H)$ is again a representation sphere with orientation-preserving $\Weyl{G}{H}$-action, we have an isomorphism $\tilde{\iota}_x^H\colon k \cong \ker(d^H_n)$. Then $\tilde{C}_n(S(V)^1;k) \cong k(G/H)^{\oplus i} \oplus R(Y)$ where $Y$ is a (possibly empty) $G$-set satisfying $Y^H = \varnothing$. We define $\iota^H_x\colon k \to x[-n]$ to be the trace sum \[\iota^H_x = \sum_{g\in [G/N_G(H)]} {}^g\tilde{\iota}^H_x\] in homological degree 0, and zero in all other degrees. 
        
        One must check that this is a well-defined chain complex homomorphism, that is, $ \im(\iota^H_x)\subseteq \ker(d^1_n)$. This follows from the fact that $N_G(H)/H = (G/H)^H\subseteq G/H$ satisfies 
        \[
        G/H = \bigsqcup_{g\in [G/N_G(H)]} g\cdot (N_G(H)/H),
        \]
        and each component of the trace sum lands in a different disjoint part. Then, the trace sum, $G$-linearity of $d_n^1$, and orientability of $S(V)$ together imply the image of $\iota^H_x$ lies in the kernel of $d_n^1$ and is isomorphic to $k$. 
        
        For the case where $K > H$ and $\dim_V(H) = \dim_V(K)$, we define $\iota^x_H \coloneqq  \iota^x_K$, using the above construction. We previously observed that since $V^H = V^K$, we have an equality of chain complexes of $k$-modules $\tilde{C}(S(V)^H;k) = \tilde{C}(S(V)^K;k)$. 
    \end{construction} 

    \begin{remark}
     The above construction extends to more general commutative rings $R$ and to representation spheres associated to real representations that are oriented over $R$. We refer the reader to \cite[Section 8]{GM26} for further details.
    \end{remark}

    These maps satisfy the following properties, which allows us to `detect' $p$-local cohomology in $\catK(G)$.
    
    \begin{proposition}\label{prop:forerunner_props}
        Let $S^V$ be a representation sphere orientable mod $p$, set $x \coloneqq  \catB(S^V)$, and let $H \in \Sub_p(G)$. The following properties hold: 
        \begin{enumerate}
            \item $\Psi^H(\iota^H_x)$ is a quasi-isomorphism.
            \item If $K =_G H$, then $\iota^H_x = \iota^K_x$.
            \item If $K\neq_G H$, $h_x(H)= h_x(K)$, and all minimal subgroups $L$ with respect to the property that $K,H \leq_G L$ satisfy $h_x(H)= h_x(K) > h_x(L)$, then $\Psi^K(\iota_x^H) = 0$ and $\Psi^H(\iota_x^K) = 0$. In particular, $\iota^K_x$ and $\iota^H_x$ differ. 
            \item If $K \in \Sub_p(G)$ satisfies $h_x(H) \neq h_x(K)$, then $\iota^K_x$ and $\iota^H_x$ differ. In particular, $\Psi^K(\iota^H_x)$ and $\Psi^H(\iota^K_x)$ are not quasi-isomorphisms.
        \end{enumerate}
    \end{proposition}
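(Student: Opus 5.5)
The plan is to compute everything through the identification $\Psi^K(\tilde{C}(S(V)^1;k)) \cong \tilde{C}(S(V)^K;k)$, which comes from $\Psi^K(kX)\cong k(X^K)$ applied degreewise to the cellular chains. Under this identification, $\Psi^K$ applied to a chain map $k \to x[-n]$ becomes the map $k \to \tilde{C}(S(V^K);k)[-n]$ obtained by taking the Brauer quotient of the degree-$n$ component. I expect the only genuinely delicate point to be tracking the Brauer quotient of the trace sum $\sum_{g\in[G/N_G(H)]} {}^g\tilde\iota^H_x$.

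For (a), let $K \geq H$ be the maximal subgroup with $V^K=V^H$ used to define $\iota^H_x=\iota^K_x$. Taking $H$-fixed points of $G/K$ picks out the cosets $gK$ with $H \le {}^gK$. The summand ${}^g\tilde\iota^K_x$ survives $\Psi^H$ precisely when its image lies in $k((G/K)^H)$. Only $g\in N_G(K)$ contributes to $k((G/K)^K)=k(N_G(K)/K)$, and after the identification $\tilde{C}(S(V)^H;k)=\tilde{C}(S(V)^K;k)$ the surviving component is exactly $\tilde\iota^K_x\colon k\cong \ker(d_n^K)$. The other conjugates $gK$ with $H\le{}^gK$ but $g\notin N_G(K)$ would give cells fixed by $H$ in dimension $n$ lying in $S(V^{{}^gK})$. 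Maximality forces $V^{{}^gK}=V^H$, hence ${}^gK\le K$ by maximality again, so such $g$ normalize $K$ and there is no extra contribution. Since $S(V^H)$ is an $(n{-}1)$-sphere with orientation-preserving action mod $p$, $\tilde{C}(S(V^H);k)$ has homology $k$ concentrated in degree $n$, generated by $\ker d_n^H$. The map $k\to\ker d_n^H$ is an isomorphism, so $\Psi^H(\iota^H_x)$ is a quasi-isomorphism.

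For (b), I would note that the trace sum is independent of conjugation. Replacing $H$ by ${}^gH$ replaces the maximal $K$ by ${}^gK$ and $\tilde\iota$ by ${}^g\tilde\iota$, and summing over $G/N_G({}^gK)$ gives the same $G$-invariant element.

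For (c), both $H$ and $K$ are maximal for their $h$-values under the hypothesis, because any $L>H$ has strictly smaller $h_x$. Applying $\Psi^K$ to $\iota^H_x$ therefore retains only summands in $k((G/H)^K)$, which is nonzero only if $K\le_G H$. Since $K\ne_G H$ and $h_x(K)=h_x(H)$, containment would contradict the strict drop. So $(G/H)^K=\varnothing$, $\Psi^K(\iota^H_x)=0$, and symmetrically for $\Psi^H(\iota^K_x)$. Combined with (a), $\iota^H_x$ and $\iota^K_x$ differ.

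For (d), $\iota^H_x$ has target $x[-h_x(H)]$, so the two maps have different degrees and differ. Moreover, $\Psi^K(x)$ has homology only in degree $h_x(K)$, so $\Psi^K(\iota^H_x)\colon k\to\Psi^K(x)[-h_x(H)]$ has target with homology in a degree $\neq 0$. Hence it cannot be a quasi-isomorphism, and the same argument applies with the roles of $H$ and $K$ swapped.
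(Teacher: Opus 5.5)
Your arguments for (a) and (d) are sound. For (d) this is exactly the paper's reasoning. For (a) you give a self-contained cell-level computation, whereas the paper simply cites \cite[Proposition 8.8]{GM26}. The argument for (c), however, rests on a false step.

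\textbf{The gap in (c).} You assert that under the hypotheses of (c), $H$ and $K$ are maximal for their $h$-values, ``because any $L>H$ has strictly smaller $h_x$.'' The hypothesis only constrains subgroups containing conjugates of \emph{both} $H$ and $K$, so this does not follow, and it fails in examples. Take $p=2$, $G=\langle a,b,c\rangle\cong C_2^3$, and $V=\chi_1\oplus\chi_2$ with $\ker\chi_1=\langle b,c\rangle$ and $\ker\chi_2=\langle a,c\rangle$. Let $H=\langle b\rangle$ and $K=\langle a\rangle$.
\begin{enumerate}
\item We have $h_x(H)=h_x(K)=1$.
\item The unique minimal $L$ containing both is $\langle a,b\rangle$, with $h_x(L)=0$, so all hypotheses of (c) hold.
\item Yet $h_x(\langle b,c\rangle)=1$, so $H$ is not maximal for its $h$-value.
\end{enumerate}
In fact $c$ acts trivially on $V$, so no cell of $S^V$ has isotropy $\langle b\rangle$. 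Hence $\iota^H_x$ has no component in any $k(G/H)$ summand; it lives entirely on $k(G/\langle b,c\rangle)$. Your reduction to ``summands in $k((G/H)^K)$'' therefore examines the wrong summands, and the vanishing is not established.

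\textbf{The repair.} Use the same device you used in (a). Let $\overline{H}$ be the pointwise stabilizer of $V^H$. It is the \emph{unique} maximal overgroup of $H$ with the same fixed space, since any $L\geq H$ with $V^L=V^H$ fixes $V^H$ pointwise. This uniqueness is also what justifies your step ``${}^gK\le K$ by maximality'' in (a), and you should say so there.
\begin{enumerate}
\item As in (a), the summand $g\cdot\tilde\iota$ of $\iota^H_x$ is supported on $n$-cells with isotropy exactly ${}^g\overline{H}$.
\item Since $\Psi^K$ projects onto $K$-fixed cells, $\Psi^K(\iota^H_x)\neq 0$ forces $K\le {}^g\overline{H}$ for some $g$.
\item Then ${}^g\overline{H}$ contains both $K$ and ${}^gH$, so it contains a minimal $L$ with $H,K\le_G L$.
\item Any such $L$ satisfies $\dim V^H=\dim V^{{}^g\overline{H}}\le \dim V^L\le \dim V^{K}=\dim V^H$. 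Hence $h_x(L)=h_x(H)$, contradicting the hypothesis.
\end{enumerate}
If $G$ is not a $p$-group and $L$ must be a $p$-subgroup, run this argument inside a Sylow $p$-subgroup of ${}^g\overline{H}$ containing $K$. By symmetry, $\Psi^H(\iota^K_x)=0$ as well.

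\textbf{Two smaller points.} In (b), equality on the nose requires choosing $\tilde\iota^{{}^gK}_x={}^g\tilde\iota^K_x$. Independent choices only agree up to a unit, since $\ker d^K_n$ is one-dimensional. In (a), the relevant complex is the reduced chains of the $n$-sphere $S^{V^H}$, not of the $(n-1)$-sphere $S(V^H)$. That is why the homology sits in degree $n$ and equals $\ker d^H_n$, there being no $(n+1)$-cells.
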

    \begin{proof}
        (a)--(c) are shown in greater generality in \cite[Proposition 8.8]{GM26}. (d) is obvious, as the codomain of $\iota^H_x$ is $x[-h_x(H)]$ and the codomain of $\iota^K_x$ is $x[-h_x(K)]$. 
    \end{proof}
    
    \begin{remark}
        Presently, the forerunner construction relies on a choice of cellularization of a representation sphere $S^V$. We expect however that this choice does not matter in principle; any two `reasonable' cellularizations of $S^V$ should produce the same morphisms (up to a nonzero scalar) in $\Hom_{\catK(G)}(\bbone, \catB(S^V)[-\dim V^H])$. 
    \end{remark}

    \section{Previous incarnations of permutation twisted cohomology}
    
    We recall the previous permutation twisted cohomology rings constructed first by Balmer--Gallauer, then extended by Miller. Our notation differs slightly from their original presentations, but the mathematical content is the same. 
    
    \begin{notation}
        For a finite group $G$, let $\Pic^+(G)\subseteq \Pic(G)$ denote the submonoid of effective endotrivials (see \Cref{def-effective-orientable}) and let $\calM(G) \subseteq \Pic^+(\catK)$ denote the submonoid consisting of endotrivials $x$ satisfying $h_x(S) = 0$, for $S$ a $p$-Sylow subgroup of $G$. Further, let $\calE_p(G) \subseteq \calM(G)$ denote the submonoid generated by the effective endotrivials \[u_N \coloneqq  \begin{cases} k(G/N) \to k & p=2 \\ k(G/N) \to k(G/N) \to k & p > 2 \end{cases}\] with $N \triangleleft G$ a normal subgroup of index $p$ and $k$ in homological degree 0. Explicitly, $u_N$ is a truncated periodic projective resolution of the trivial $k(G/N)$-module.
    \end{notation}
    
   \begin{remark}
        One may observe that $u_N$ is effective, associated to a rotation action on a 2-sphere if $p$ is odd and a reflection of a 1-sphere if $p = 2$. 
   \end{remark}
    
    \begin{definition}\cite[Definition 12.16]{BG25}
        Let $G$ be a finite group. The $\calE_p(G)$-graded \emph{(permutation) twisted cohomology ring} of $G$ is the following multigraded ring: \[\tH_{\calE_p}(G) = \tH_{\calE_p}(G;k) \coloneqq  \bigoplus_{s \in \bbN} \bigoplus_{x \in \calE_p(G)} \Hom_{\catK(G)}(\bbone, x[-s]).\]
    \end{definition}

    \begin{remark}
         When $G$ is elementary abelian, one may think of $\tH_{\calE_p}(G)$ as a representation-graded Bredon homology ring, i.e., the ring formed via the reduced Bredon homology of linear representation spheres. See for instance \cite{HS25}.
    \end{remark}
    
    \begin{theorem}\cite[Theorem 12.21]{BG25}
        The $\calE_p(G)$-graded twisted cohomology ring $\tH_{\calE_p}(G)$ is a finitely generated as $k$-algebra by the global sections $\bbone \to u_N[i]$ with $i \in \{-2, -1, 0\}$ for all normal subgroups $N$ of index $p$. In particular, $\tH_{\calE_p}(G)$ is Noetherian. 
    \end{theorem}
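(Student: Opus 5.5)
The plan is to reduce to an elementary abelian group and then induct on its rank, using the cofiber sequences of the structure maps $\bbone \to u_N$.

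\emph{Reduction to an elementary abelian quotient.} Every index-$p$ normal subgroup $N$ contains the Frattini subgroup $\Phi(G)$. Set $E = G/\Phi(G)$, which is elementary abelian. Then each $u_N$, and hence each $x \in \calE_p(G)$ together with all its shifts, is the inflation of the corresponding complex for $E$. All terms of such $x$ are sums of modules $k(G/M)$ with $M \supseteq \Phi(G)$, and
\[
\Hom_{kG}(k, k(G/M)) = \Hom_{kE}(k, k(E/\bar M)).
\]
So inflation is fully faithful on this class of complexes, and it identifies $\tH_{\calE_p}(G)$ with $\tH_{\calE_p}(E)$, generators included. We may therefore assume $G = E$ is elementary abelian.

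\emph{Generators and the key triangles.} For each $N$, name the global sections:
\begin{itemize}
\item $a_N\colon \bbone \to u_N$, the inclusion of $k$ in degree $0$ of $u_N$;
\item the sections $\bbone \to u_N[-1]$ and $\bbone \to u_N[-2]$, given by the norm/trace elements into the top term of $u_N$ (only the appropriate degree exists for $p=2$).
\end{itemize}
The cone of $a_N$ is built from copies of $k(G/N)$, that is, it is induced from $N$. For any $y \in \calE_p(G)$, tensoring with $y$ and using Frobenius reciprocity gives a long exact sequence
\[
\cdots \to \Hom(\bbone, y[-s]) \xrightarrow{a_N} \Hom(\bbone, u_N\otimes y[-s]) \to \Hom_{\catK(N)}(\bbone, \Res_N y[-s']) \to \cdots
\]
The restrictions behave as follows. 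Since $G$ is abelian, each $N'$ is normal, and for $N' \neq N$ we have $\Res_N u_{N'} \simeq u_{N\cap N'}$, where $N \cap N'$ has index $p$ in $N$. On the other hand, $\Res_N u_N$ is a complex of free $k$-modules $k^p$, hence equal in $\catK(N)$ to a sum of shifts of $\bbone$. So $\Hom_{\catK(N)}(\bbone, \Res_N y[-\ast])$ is a twisted cohomology group of the smaller group $N$, with twists from $\calE_p(N)$ and finitely many extra shifted copies.

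\emph{Induction.} We induct on the rank of $E$ and on the total multidegree $\sum m_i$ of $y = \bigotimes u_{N_i}^{m_i}$. The base case $y = \bbone$ is immediate: $\Hom(\bbone, \bbone[-s])$ is $k$ for $s = 0$ and zero otherwise. By the inductive hypothesis, the $N$-term is generated by restrictions of the $N$-level generators. Every $u_{N\cap N'}$ is the restriction of some $u_{N'}$, and the corresponding sections restrict compatibly. Hence every class in the third term that lies in the image of the middle term lifts to a polynomial in the $G$-level generators, modulo the image of multiplication by $a_N$. Ascending induction on multidegree then shows the middle term is generated by monomials in the $a_N$ and the norm-type sections.

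\emph{Main obstacle.} The hard step is this lifting: showing that the map to the $N$-term sends the subalgebra generated by $\{a_{N'}, \text{norm sections of } u_{N'}\}$ onto its image. This requires an explicit computation that restriction to $N$ of the chosen $G$-sections gives the $N$-level generators (up to units), plus control of the extra shifted summands coming from $\Res_N u_N$. I would first try to do this by writing the truncated periodic complexes out explicitly, since they are concrete two- or three-term complexes.

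\emph{Conclusion.} Once finite generation is established, note that $\tH_{\calE_p}(G)$ is graded-commutative, being an endomorphism ring of the unit twisted by a symmetric monoidal grading. A graded-commutative finitely generated $k$-algebra is Noetherian by the Hilbert basis theorem, applied to its commutative even part, over which the whole ring is finite.
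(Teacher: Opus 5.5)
The paper does not prove this statement; it quotes it from \cite[Theorem 12.21]{BG25}. Your argument therefore has to stand on its own, and for odd $p$ it does not: the claim that $\cone(a_N)$ is induced from $N$ is false. For $p>2$ one has $\cone(a_N)\simeq\bigl(k(G/N)\xrightarrow{1-g}k(G/N)\bigr)$ in degrees $2,1$. This lies in the thick subcategory generated by $k(G/N)$, but it is not induced. Its restriction to $N$ is isomorphic to $\bbone[1]\oplus\bbone[2]$, whereas, with $G$ abelian after your reduction, $\Res_N\on{Ind}_N^G z\cong z^{\oplus p}$ for every $z$. So Frobenius reciprocity does not identify the third term of your long exact sequence with $N$-level twisted cohomology, and the map out of the middle term is not restriction to $N$.

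Concretely, your inductive step ``if $\xi-P$ restricts to zero on $N$, then $\xi\in P+a_N\cdot\tH_{\calE_p}(G)$'' already fails for $y=\bbone$. Take $c_N\colon\bbone\to u_N[-1]$, the norm into the \emph{middle} term of $u_N$, not the top one. It is nonzero in $\catK(G)$. Its restriction vanishes, since $\Res_N u_N\simeq\bbone[2]$ gives $\Hom_{\catK(N)}(\bbone,\Res_N u_N[-1])\cong\Hom_{\catK(N)}(\bbone,\bbone[1])=0$. Yet $a_N\cdot\Hom_{\catK(G)}(\bbone,\bbone[-1])=0$, so $c_N$ is not an $a_N$-multiple. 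Restriction to $N$ cannot see any $c_N$-multiple, so your induction never produces them.

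To repair this, filter the cone by the triangle $k(G/N)[1]\to\cone(a_N)\to k(G/N)[2]\xrightarrow{1-g}k(G/N)[2]$. After tensoring with $y$ and applying Frobenius reciprocity, the connecting map is induced by $1-g$ acting on $\Hom_{\catK(N)}(\bbone,\Res_N y[\ast])$. It vanishes because, by the rank induction, every class there is a restriction from $G$ and hence $G/N$-invariant. Write $y=u_N^{\otimes m}\otimes y'$ with $y'$ free of $u_N$. The resulting short exact sequence then has its quotient hit by the elements $b_N^{m+1}P'$ and its sub hit by $c_Nb_N^{m}P'$, with $P'$ monomials in the generators for $N'\neq N$. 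This uses that $c_N$ maps to the norm map $\bbone\to k(G/N)$, which corresponds to $1$ under Frobenius reciprocity.

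For $p=2$ your scheme is correct as it stands, and the ``main obstacle'' is easy:
\begin{itemize}
\item $\cone(a_N)\simeq k(G/N)[1]$, and the middle map is $\Res_N$ followed by $\Res_N u_N\cong\bbone[1]$;
\item $\Res_N(a_N)=0$ and $\Res_N(b_N)$ is a unit;
\item for $N'\neq N$, $\Res_N$ sends $a_{N'},b_{N'}$ to $a_{N\cap N'},b_{N\cap N'}$;
\item each index-$p$ subgroup $M<N$ equals $N\cap N'$ for exactly $p$ choices of $N'\neq N$, so the exponents of any $N$-monomial can be distributed among them.
\end{itemize}

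A minor slip: $G/\Phi(G)$ is elementary abelian only for $p$-groups (e.g.\ $S_3$ at $p=2$). Take the quotient by $\bigcap_{N\in\calN}N$ instead. The identification of twisted cohomology is then immediate, because inflation along a quotient is fully faithful.
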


    \begin{remark}
    Note that the global sections of $\bbone \to u_N[i]$ with $i \in \{-2', 0\}$ correspond to the forerunner homomorphisms up to scalar. In other words, $\tH_{\calE_p}(G)$ is generated by the forerunners associated to all $u_N$ and, if $p>2$, by a finite set of nilpotent elements.     
    \end{remark}
     
    We now recall Balmer--Gallauer open cover of the spectrum $\Spc(\catK(G))$ for an arbitrary group $G$. 

   \begin{recollection}
        Write $\catN$ for the set of all subgroups $N\triangleleft G$ with $[G:N] = p$. For every $H \in \Sub_p(G)/G$, define \[U_{\calE_p}(H) \coloneqq  \bigcap_{N\in \catN}\open(\iota_{u_N}^H) = \bigcap_{N\in \catN}\supp(\cone(\iota_{u_N}^H))^c.\] 
        In fact,  \cite[Proposition 13.11]{BG25} asserts that $\{U_{\calE_p}(H)\}_{H \in \Sub_p(G)/G}$ is an open cover of $\Spc(\catK(G))$.
   \end{recollection}

    \begin{definition}
        Let $S_H \subset \tH_{\calE_p}(G)$ be the multiplicative subset generated by all $\iota^H_{u_N}$. We define a $\bbZ$-graded ring \[\calO^\sbull_{\calE_p}(H) \coloneqq  \big(\tH_{\calE_p}(G)[S_H\inv]\big)_{\on{0-twist}}\] as the twist-zero part of the localization of $\tH_{\calE_p}(G)$ with respect to $S_H.$
    \end{definition}

    \begin{remark}
         One may perform the central localization on $\catK(G)$ \[\catL_{\calE_p}(H) \coloneqq  \catK(G)[S_H\inv]\] with respect to the central multiplicative subset $S_H$. 
    \end{remark}
    
    We obtain a multi-graded version of Balmer's comparison map \cite{Bal10}, see also Dell'Ambrogio's appendix \cite[Appendix A]{Gal25} on such multigraded comparison maps in generality. 
    
    \begin{proposition}{\cite[Proposition 13.4]{BG25}}
        There is a continuous comparison map \[\comp_{\calE_p(G)}\colon \Spc(\catK(G)) \to \Spec^h(\tH_{\calE_p}(G))\] mapping a tt-prime $\catP$ to \[\langle f \in \tH_{\calE_p}(G) \text{ homog.}\mid \cone(f) \not\in \catP\rangle.\] The map is characterized by the fact that for all homogeneous $f$, $\comp_G\inv(Z(f)) = \supp(\cone(f))$. 
    \end{proposition}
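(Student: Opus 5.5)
The plan is to follow Balmer's original construction of the comparison map and then check that it adapts to the multigraded setting. Write $R \coloneqq \tH_{\calE_p}(G)$, graded by the monoid $\bbN \times \calE_p(G)$. For a prime $\catP \in \Spc(\catK(G))$, set
\[
\rho(\catP) \coloneqq \langle f \in R \text{ homogeneous} \mid \cone(f) \notin \catP \rangle .
\]
Four things need to be established in turn.

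\textbf{Step 1: $\rho(\catP)$ is a homogeneous ideal.} Take homogeneous $f\colon \bbone \to x[-s]$ and $g\colon \bbone \to y[-t]$. The product is $fg = (f\otimes g)$ up to the canonical identifications, so $fg = (f \otimes y[-t])\circ g$. The octahedral axiom then gives
\[
\cone(fg) \in \langle \cone(f)\otimes y,\ \cone(g)\rangle .
\]
Since $y$ is invertible, $\cone(f) \otimes y \in \catP$ if and only if $\cone(f) \in \catP$. Hence if $\cone(fg) \notin \catP$, at least one of $\cone(f)$, $\cone(g)$ lies outside $\catP$.

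For the converse direction I would use Balmer's lemma $\cone(f)\otimes\cone(g) \in \langle \cone(fg)\rangle$, again up to invertible twists. Combined with primality of $\catP$, this shows that if $fg$ lies in the set of homogeneous elements with cone outside $\catP$, then so does $f$ or $g$. The homogeneous elements $f$ with $\cone(f) \in \catP$ are therefore closed under products, so their complement is a homogeneous prime candidate.

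I also need this complement to be closed under sums of elements of the same degree. For that I would use $\cone(f+g) \in \langle \cone(f), \cone(g)\rangle$, which follows by writing $f+g$ as the composite of the diagonal, $f\oplus g$, and the codiagonal. Finally, $\rho(\catP)$ is proper because $\cone(1) = 0 \in \catP$.

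\textbf{Step 2: primality.} This is exactly the tensor lemma above: if $f,g \notin \rho(\catP)$ then $\cone(f),\cone(g) \in \catP$, and $\cone(fg) \in \catP$ follows from the octahedral containment in Step 1. So $\rho(\catP)$ is a homogeneous prime, and I define $\comp_{\calE_p(G)}(\catP) \coloneqq \rho(\catP)$.

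\textbf{Step 3: the characterization and continuity.} The closed sets $Z(f) = \{\frakp \mid f \in \frakp\}$ for homogeneous $f$ form a basis of closed sets of $\Spec^h(R)$. By construction,
\[
\rho\inv(Z(f)) = \{\catP \mid f\in\rho(\catP)\}.
\]
I claim $f \in \rho(\catP)$ if and only if $\cone(f)\notin\catP$, that is, $\catP \in \supp(\cone(f))$. One direction is the definition. For the other, if $f$ is a combination of generators but $\cone(f) \in \catP$, then $f$ lies in the multiplicative, additively closed set from Step 1, which is disjoint from the ideal; this is a contradiction. Supports are closed, so the map is continuous. Uniqueness of a map with this property is then immediate, because the sets $Z(f)$ separate points of $\Spec^h(R)$.

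\textbf{Main obstacle.} I expect the hardest part to be keeping the bookkeeping consistent in the multigraded setting. Two points need care. First, commutativity of $R$ up to the sign and twist conventions, so that homogeneous primes behave as expected. Second, the invertible twists appearing in $\cone(f)\otimes y$. For the second point I would rely throughout on invertibility of $x \in \calE_p(G)$ and the fact that tt-primes are stable under tensoring with invertibles. For the first, I would defer to Dell'Ambrogio's general treatment in \cite[Appendix A]{Gal25}.
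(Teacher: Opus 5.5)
Your overall plan is the right one: adapt Balmer's construction from \cite{Bal10}, which is also what the paper does, since it gives no proof and simply cites \cite[Proposition 13.4]{BG25} and \cite[Appendix A]{Gal25}. However, there is a genuine gap, and the lemma you propose to fill it is false.

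Write $T_\catP$ for the set of homogeneous $f$ with $\cone(f)\notin\catP$. The following all hinge on $T_\catP$ being exactly the set of homogeneous elements of $\langle T_\catP\rangle$:
\begin{itemize}
\item primality and properness of $\langle T_\catP\rangle$;
\item the characterization $\comp\inv(Z(f))=\supp(\cone(f))$.
\end{itemize}
For that, $T_\catP$ must be closed under sums in a fixed degree and must absorb multiplication by arbitrary homogeneous elements.

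For sums you invoke $\cone(f+g)\in\langle\cone(f),\cone(g)\rangle$. This fails already for $f=\id_\bbone$ and $g=-\id_\bbone$ in degree zero. Both cones vanish, but $\cone(0)\cong\bbone\oplus\bbone[1]$ lies in no prime. The diagonal/codiagonal factorization introduces cones such as $\cone(\Delta)\cong\bbone$, which are not in $\langle\cone(f),\cone(g)\rangle$. Even if the lemma held, it would give closure under sums of the complement $\{f\mid \cone(f)\in\catP\}$, not of $T_\catP$. That complement is not closed under sums either, by the same example.

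Likewise, $\cone(f)\otimes\cone(g)\in\langle\cone(fg)\rangle$ together with primality only shows that $T_\catP$ is multiplicatively closed. It does not give absorption, i.e.\ $f\in T_\catP\Rightarrow fg\in T_\catP$ for every homogeneous $g$. The conclusion you draw from it merely restates the octahedral one.

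Consequently neither later step goes through:
\begin{itemize}
\item Step 2 presupposes that $f\notin\comp(\catP)$ forces $\cone(f)\in\catP$.
\item Step 3's ``disjointness'' argument is circular and rests on the false additivity.
\end{itemize}

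The missing input is Balmer's locality argument, run in the Verdier quotient $\catK(G)/\catP$. Its zero ideal is prime, and there $\cone(f)\in\catP$ if and only if $f$ becomes invertible.

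\textbf{Absorption.} Suppose $f\otimes g$ is invertible in the quotient. The factorizations $f\otimes g=(f\otimes 1)\circ(1\otimes g)=(1\otimes g)\circ(f\otimes 1)$ show that $f\otimes 1_{y[-t]}$ is split epi and $f$ is split mono. Hence $f$ is invertible, after tensoring with the inverse of $y[-t]$. Equivalently, $\cone(f)\in\langle\cone(fg)\rangle$.

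\textbf{Sums.} Suppose $f+g\colon\bbone\to x[-s]$ is invertible in the quotient. Compose with its inverse to get $f'+g'=1$ in $\End(\bbone)$, with $\cone(f')\cong\cone(f)$ and $\cone(g')\cong\cone(g)$.
\begin{itemize}
\item The square of $f'$ acts as zero on $C=\cone(f')$. Indeed, if $\bbone\xrightarrow{f'}\bbone\xrightarrow{a}C\xrightarrow{b}\bbone[1]$ is exact, then $f'\otimes 1_C$ kills $a$ and is killed by $b$. So it factors both as $t\circ b$ and as $a\circ t'$, and its square is $t\circ b\circ a\circ t'=0$.
\item By the same argument, the square of $g'$ acts as zero on $\cone(g')$.
\item $\End(\bbone)$ is commutative and acts on objects by ring maps. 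Expanding binomially, $1=(f'+g')^3$ acts as zero on $\cone(f')\otimes\cone(g')$.
\end{itemize}
Hence $\cone(f)\otimes\cone(g)\in\catP$, and primality puts one of the two cones in $\catP$.

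With these two facts in hand, your Steps 2 and 3 go through essentially as written.
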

    
    The comparison map is compatible with the open cover $\{U_{\calE_p}(H)\}$ in the following sense. 
    
    \begin{proposition}\cite[Remark 14.19]{BG25}
        For every $H \in \Sub_p(G)$, the following square commutes:
        \begin{center}
            \begin{tikzcd}
                U_{\calE_p}(H) = \Spc(\catL_{\calE_p}(H)) \ar[r, hookrightarrow] \ar[d, "\comp_{\catL_H}"'] & \Spc(\catK(G)) \ar[d, "\comp_G"]\\
                \Spec^h(\calO^\sbull_{\calE_p}(H)) \ar[r, hookrightarrow]& \Spc(\tH_{\calE_p}(G))
            \end{tikzcd}
        \end{center}
    \end{proposition}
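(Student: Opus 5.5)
The plan is to check commutativity of the square on points, after identifying each of its arrows concretely. I treat the bottom horizontal arrow as landing in $\Spec^h(\tH_{\calE_p}(G))$.

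\textbf{Top arrow.} I would use Balmer's central localization: for a central multiplicative subset $S$ of global sections, the Verdier quotient $\catK(G)\twoheadrightarrow \catK(G)[S\inv] = \catK(G)/\langle \cone(s)\mid s\in S\rangle$ identifies $\Spc(\catK(G)[S\inv])$ with the subspace
\[
\{\catP \mid \cone(s)\notin\catP \text{ for all } s\in S\}.
\]
Since $S_H$ is generated by the finitely many $\iota^H_{u_N}$, and $\cone(s_1s_2)$ lies in the thick $\otimes$-ideal generated by $\cone(s_1)$ and $\cone(s_2)$, this subspace is $\bigcap_N \open(\iota^H_{u_N}) = U_{\calE_p}(H)$. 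The same result identifies morphisms in the localization: for any twist $x$ and shift $s$,
\[
\Hom_{\catL_{\calE_p}(H)}(\bbone,x[-s]) \cong \Big(S_H\inv \bigoplus_{y,t}\Hom_{\catK(G)}(\bbone,y[-t])\Big)_{(x,s)}.
\]

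\textbf{Bottom arrow.} In $\catL_{\calE_p}(H)$ each $\iota^H_{u_N}\colon \bbone\to u_N[-h_{u_N}(H)]$ becomes invertible, so each $u_N$, and hence every twist in $\calE_p(G)$, becomes isomorphic to a shift of $\bbone$. Consequently the $\bbZ$-graded endomorphism ring of the unit in $\catL_{\calE_p}(H)$ is exactly $\calO^\sbull_{\calE_p}(H)$, and $\comp_{\catL_H}$ is Balmer's $\bbZ$-graded comparison map for this ring. The bottom arrow is the composite
\[
\Spec^h(\calO^\sbull_{\calE_p}(H)) \cong \Spec^h(\tH_{\calE_p}(G)[S_H\inv]) \hookrightarrow \Spec^h(\tH_{\calE_p}(G)),
\]
with image the open set $D(S_H)$.
\begin{itemize}
\item The first map is $\frakq\mapsto$ the twist-zero part of $\frakq$. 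It is a homeomorphism because the localized ring contains a unit in every twist: any multigraded-homogeneous element becomes a twist-zero element after multiplying by a unit, so homogeneous primes are determined by their twist-zero parts and every twist-zero prime extends uniquely.
\item The second map is the usual open embedding for localization at homogeneous elements.
\end{itemize}

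\textbf{Commutativity on points.} Take $\catP\in U_{\calE_p}(H)$ with image $\catP_L$ in $\catL_{\calE_p}(H)$. A twist-zero element of the localization has the form $f/s$ with $f$ homogeneous and $s\in S_H$. In $\catL_{\calE_p}(H)$ the map $s$ is an isomorphism, so $\cone(f/s)\cong\cone(f)$ up to shift and twist by an invertible object. Hence $\cone(f/s)\in\catP_L$ if and only if $\cone(f)\in\catP$, using that $\catP$ corresponds to $\catP_L$ under the quotient. This says that $\comp_{\catL_H}(\catP_L)$ is the twist-zero part of the localization of $\comp_G(\catP)$. Note that $\comp_G(\catP)$ avoids $S_H$ precisely because $\catP\in U_{\calE_p}(H)$, so this localization is proper.

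The main obstacle is the twist bookkeeping. One must verify that conjugating by the invertible $\iota^H_{u_N}$ is compatible with the multigraded ring structure, including the Koszul signs in the graded-commutative setting. One must also check that the comparison map of $\catL_{\calE_p}(H)$ computed in $\bbZ$-grading agrees with the multigraded one restricted to twist zero. I would handle both by invoking Dell'Ambrogio's general multigraded comparison framework \cite[Appendix A]{Gal25}, which treats such central localizations uniformly.
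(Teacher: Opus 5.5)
Your proposal is correct and is essentially the paper's route: the paper imports this square from \cite[Remark 14.19]{BG25}, and for its own analogue (\Cref{prop:comm_square_from_localization}) says only that it follows from the constructions, which is the unwinding you carry out. One slip to fix: $\Spc$ of the Verdier quotient $\catK(G)/\langle\cone(s)\mid s\in S\rangle$ consists of the primes \emph{containing} every $\cone(s)$, so the condition must read $\cone(s)\in\catP$ rather than $\cone(s)\notin\catP$ (with $\notin$ you get $\bigcap_{s}\supp(\cone(s))$, not $\bigcap_N\open(\iota^H_{u_N})$), and this corrected form is what you actually use later when you say $\comp_G(\catP)$ avoids $S_H$.
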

    
    The comparison map on the left-hand side is Balmer's `classical' comparison map \cite{Bal10}. Moreover, the endotrivials $u_N$ are all tt-line bundles under the open cover $\{U_{\calE_p}(H)\}$, i.e., isomorphic in each localization to a shift of the tensor unit (essentially by construction of the opens). The payoff is as follows.
    
    \begin{theorem}\cite[Theorem 15.3, Corollary 15.6]{BG25}
        Let $G$ be an elementary abelian $p$-group and let $H \leq G$ be a subgroup. Then the comparison map restricts to a homeomorphism \[\comp_{\catL(H)}\colon U_{\calE_p(G)}(H) \xrightarrow{\sim} \Spec^h(\calO_G^\sbull(H)).\] Consequently, the comparison map $\comp_G$ is an open immersion with image \[\{\frakp \in \Spec^h(\tH_{\calE_p}(G)) \mid \text{ For all $N \in \calN$, either $\iota_{u_N}^1 \not\in \frakp$ or $\iota_{u_N}^G \not\in \frakp$}\}.\]
    \end{theorem}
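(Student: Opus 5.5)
The plan follows the Balmer--Gallauer strategy, and the open-immersion clause is formal once the local homeomorphisms are known. Fix $H \leq G$. The localization $\catL_{\calE_p}(H) = \catK(G)[S_H\inv]$ has spectrum $U_{\calE_p}(H)$, and there all $u_N$ become shifts of the unit, since the forerunners $\iota^H_{u_N}$ are inverted. Consequently the twist-zero part $\calO^\sbull_{\calE_p}(H)$ is exactly the $\bbZ$-graded central ring $\End^\sbull_{\catL(H)}(\bbone)$. By the commutative square of \cite[Remark 14.19]{BG25}, it therefore suffices to show that Balmer's classical comparison map $\comp_{\catL(H)}\colon \Spc(\catL(H)) \to \Spec^h(\End^\sbull(\bbone))$ is a homeomorphism for each $H$. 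Granting this, the open immersion follows by gluing. The opens $U_{\calE_p}(H)$ cover $\Spc(\catK(G))$ by \cite[Proposition 13.11]{BG25}. Each bottom arrow $\Spec^h(\calO^\sbull(H)) \hookrightarrow \Spec^h(\tH_{\calE_p}(G))$ is the open immersion $D(\prod_N \iota^H_{u_N})$. Moreover $\comp_G\inv$ of this open equals $U_{\calE_p}(H)$ by the characterization $\comp_G\inv(Z(f)) = \supp(\cone(f))$. Hence $\comp_G$ is injective and open, and its image is the union of the $D(\iota^H)$'s. Since $G$ is elementary abelian, these collapse to the stated condition: for each $N$, $u_N$ has only the two forerunners $\iota^1_{u_N}$ and $\iota^G_{u_N}$, according to whether $H \leq N$.

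For the local homeomorphism, I would reduce to a known case. After inverting $S_H$, the category $\catL(H)$ should be equivalent, up to a further localization, to a category governed by $\Psi^H$ and $\on{D}_b(k(G/H))$. Concretely, I expect $\catL(H)$ to be the localization of $\catK(G)$ at the primes $\catP(K,\frakp)$ with $K \geq H$ whose support avoids the cones of the $\iota^H_{u_N}$. Its spectrum is then a union of strata $\im\check\psi^K$, and on each stratum the comparison map agrees, via $\Psi^K$, with the classical map $V_{G/K} \cong \Spec^h(\on{H}^\sbull(G/K;k))$, which is a homeomorphism by Benson--Carlson--Rickard/Balmer. For injectivity, I would use \Cref{prop:forerunner_props}: parts (a) and (c) show that the forerunners separate strata, because $\Psi^K(\iota^H_{u_N})$ is a quasi-isomorphism or zero depending on whether $K \leq N$ or not. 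Applying the classical result stratum by stratum then separates points within a stratum.

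The main obstacle is surjectivity, together with identifying the topology, i.e.\ showing that $\calO^\sbull(H)$ has no homogeneous primes beyond those coming from tt-primes. My first attempt would be to compute $\calO^\sbull(H)$ explicitly from the generators of \cite[Theorem 12.21]{BG25}. After inverting the $\iota^H_{u_N}$, I expect the degree-$(-1)$ and $(-2)$ generators to yield a polynomial ring in $\on{rank}(G)$ variables, modulo nilpotents, for $p$ odd. I would then match its homogeneous spectrum with the explicit description of $U_{\calE_p}(H)$ via the inclusion relations among the primes $\catP(K,\frakp)$ from \cite[Corollary 7.13]{BG25}. Finally, closed subsets would be compared by showing that every $Z(f)$ pulls back to a support, and conversely that supports of objects in $\catL(H)$ are cut out by finitely many such $f$. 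This last step uses Noetherianity of $\tH_{\calE_p}(G)$ and Balmer's criterion that the comparison map is a homeomorphism when $\End^\sbull(\bbone)$ is coherent and the map is injective and surjective.
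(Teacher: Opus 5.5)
Your gluing argument is fine: $\Spec^h(\calO^\sbull_{\calE_p}(H))$ is the open $D(\prod_N\iota^H_{u_N})$, and its preimage under $\comp_G$ is $U_{\calE_p}(H)$. So local homeomorphisms do give an injective open embedding. The genuine gap is in the local homeomorphism, and you have put the difficulty in the wrong place.

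Surjectivity of $\comp_{\catL(H)}$ is free. $\calO^\sbull_{\calE_p}(H)$ is the $0$-twist part of a localization of the finitely generated algebra $\tH_{\calE_p}(G)$, hence Noetherian, so \cite[Theorem 7.3]{Bal10} applies and no explicit computation of $\calO^\sbull_{\calE_p}(H)$ is needed. What is not free is the topology. The criterion you invoke at the end (coherent $\End^\sbull(\bbone)$ plus bijectivity implies homeomorphism) is not what Balmer proves: his coherence argument yields surjectivity only. A continuous spectral bijection of spectral spaces need not reflect specialization; think of the discrete two-point space mapping onto the Sierpi\'nski space. To get $\comp_{\catL(H)}(\supp(a))$ closed for an arbitrary object $a$, the Nakayama-type argument (\Cref{prop:Bal7.5}) needs $\Hom^\sbull(\bbone,a)$ nonzero and coherent, which you do not control.

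An extra input is required. This is why, for its analogous \Cref{thm:open_immersion_subring}, the paper combines injectivity and Noetherianity with tt-stratification of $\catD(G)|_{U(H)}$ (\Cref{lem-homeo-subring}, \Cref{thm-homeo-fullcohomologyring}). The statement itself is only quoted from \cite{BG25}, not reproved. Alternatively, it would suffice to know that every object of $\catL_{\calE_p}(H)$ has coherent $\Hom^\sbull(\bbone,-)$, e.g.\ if $\catL_{\calE_p}(H)$ were generated by $\bbone$.

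Several intermediate claims are also wrong.

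First, the primes in $U_{\calE_p}(H)$ come from subgroups $K\le H$, not $K\ge H$. For $H\le N$ one has $\iota^H_{u_N}=\iota^1_{u_N}$, and $\check\Psi^K(\iota^1_{u_N})=0$ when $K\not\le N$. So $\catP(K,\frakp)\in U_{\calE_p}(H)$ forces $K\le\bigcap_{N\ge H}N=H$. Thus $U_{\calE_p}(1)$ is the cohomological open, while $U_{\calE_p}(G)$ meets every stratum $\im\check\psi^K$, but for $K<G$ only in a proper open part; it is not a union of strata.

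Second, for $K\le N$, $\Psi^K(\iota^G_{u_N})=\iota^{G/K}_{u_{N/K}}$ is neither zero nor a quasi-isomorphism. Strata inside $U_{\calE_p}(H)$ must instead be separated by the $0$-twist ratios $\iota^1_{u_N}/\iota^G_{u_N}$ (with $N\not\ge H$). Injectivity within a stratum also needs surjectivity of $\Psi^K\colon\tH_{\calE_p}(G)\to\tH_{\calE_p}(G/K)$, which holds because inflation splits it.

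Third, the image does not collapse to the stated set just because each $u_N$ has two forerunners. That only gives $\bigcup_H\bigcap_N D(\iota^H_{u_N})\subseteq\bigcap_N\big(D(\iota^1_{u_N})\cup D(\iota^G_{u_N})\big)$. Most functions $\calN\to\{1,G\}$ do not come from subgroups: for $G=C_p^2$ there are $2^{p+1}$ such functions but only $p+3$ subgroups. For the reverse inclusion, take $\frakp$ in the right-hand side and invert every $\iota^1_{u_N},\iota^G_{u_N}$ not in $\frakp$. Each $u_N$ then becomes a shift of $\bbone$ and the $0$-twist ring is Noetherian, so Balmer's surjectivity theorem puts $\frakp$ in the image.
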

    
    \begin{remark}
        The $\calE_p$-graded twisted cohomology ring is somewhat tractable. For elementary abelian $p$-groups, Balmer--Gallauer gave an algebra $\underline{\calO}^\sbull_G(H)$ by generators and relations for which $\underline{\calO}^\sbull_G(H)_{\on{red}} \cong \calO^\sbull_G(H)_{\on{red}}$, see \cite[Theorem 17.13]{BG25}. Hausmann--Schwede deduced that when $p =2$, the two rings are in fact isomorphic, confirming Balmer--Gallauer's expectation, see \cite[Remark 3.3]{HS25}.
    \end{remark}
    
    \subsection{$\calM(G)$-graded twisted cohomology} Miller's motivation to `remix' Balmer--Gallauer's original twisted cohomology ring began from the classification of endotrivials: namely, the family $\{u_N\}_{N\in \calN}$ corresponded precisely to the nontrivial irreducible real representations of an elementary abelian $p$-group via the map $\catB\colon \on{RO}(G) \to \Pic(\catK(G))$. Since $\catB$ is surjective for $p$-groups, one may hope that all endotrivials are tt-line bundles. Indeed, following the same construction, we see using forerunners for $p$-groups, this is the case.
    
    \begin{notation}
        Let $\calI(G)$ denote the image of $\Irr_\bbR(G)\setminus \{\bbR\}$ under $\catB$ in $\Pic(\catK(G))$.
    \end{notation}
    
    \begin{theorem}\cite{Mil25c}
    \label{thm-opencover-Mgraded}
        Let $G$ be a $p$-group. For every representative of a conjugacy class of $p$-subgroups $H \in \Sub_p(G)/G$, set \[U_{\calM}(H) \coloneqq  \bigcap_{x\in \calI(G)} \open(\iota^H_{x}).\] Then $\{U(H)\}_{H\in \Sub_p(G)/G}$ is an open cover of $\Spc(\catK(G))$ and the closed point $\catM(K) \in \Spc(\catK(G))$ belongs to $U_\calM(H)$ if $K =_G H$. Moreover, every endotrivial $x \in \Pic(\catK(G))$ is a tt-line bundle; in particular, we have \[x \cong k[h_x(H)] \in \catK(G)|_{U(H)}.\]
    \end{theorem}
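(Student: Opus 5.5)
The plan is to prove the three assertions in turn: the opens $U_\calM(H)$ cover, the closed points sit in the right opens, and every endotrivial trivializes on each $U_\calM(H)$.

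\textbf{Covering and closed points.} Fix $H\in\Sub_p(G)$. For every $x\in\calI(G)$, \Cref{prop:forerunner_props}(a) says that $\Psi^H(\iota^H_x)$ is a quasi-isomorphism. Since $G$ is a $p$-group, $\check\Psi^H$ is $\Psi^H$ followed by the Verdier quotient to $\on{D}_b(k(\Weyl{G}{H}))$. Hence $\check\Psi^H(\cone(\iota^H_x))=0$, so $\cone(\iota^H_x)\in\catP(H,\frakp)$ for every $\frakp\in V_{\Weyl{G}{H}}$. It follows that the whole image of $\check\psi^H$, in particular every $\catP(H,\frakp)$, lies in $\open(\iota^H_x)$.

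Because $\calI(G)$ is finite, $U_\calM(H)$ is a finite intersection of opens and is therefore open, and it contains $\im\check\psi^H$. By \cite[Theorem 2.10]{BG25} every prime has the form $\catP(H,\frakp)$ for some $H$, so the $U_\calM(H)$ cover $\Spc(\catK(G))$. The closed point $\catM(K)=\catP(K,\frakm)$, with $K=_GH$, lies in $\im\check\psi^K$. By \Cref{prop:forerunner_props}(b) the forerunners attached to $K$ and $H$ coincide, so $\catM(K)\in U_\calM(H)$.

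\textbf{Line bundles.} On the localization $\catK(G)|_{U(H)}$ each $\iota^H_x\colon\bbone\to x[-h_x(H)]$ becomes an isomorphism, because its cone is supported away from $U(H)$. So for $x\in\calI(G)$ we get $x\cong\bbone[h_x(H)]$ there. These isomorphisms must then be propagated to all of $\Pic(\catK(G))$:
\begin{itemize}
\item Tensor products and duals of tt-line bundles are again line bundles, and $h$ is a homomorphism, so the shifts add up correctly.
\item For $p$-groups, $\catB$ is surjective, and $\on{RO}(G)$ is generated by $\Irr_\bbR(G)$.
\item The trivial representation gives $\catB(\bbR)=\bbone[1]$, a shift of the unit.
\item $\Hom(G,k^\times)$ is trivial for a $p$-group in characteristic $p$.
\end{itemize}
Together these show that every $x$ is, up to shift, a product of elements of $\calI(G)$ and their inverses. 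Hence $x\cong k[h_x(H)]$ on $U(H)$.

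\textbf{Main obstacle.} The delicate step is the claim $\cone(\iota^H_x)\in\catP(H,\frakp)$ for all $\frakp$. It needs $\check\Psi^H(\iota^H_x)$ to be an isomorphism in $\on{D}_b$, and not just nonzero. This uses orientability, which is automatic here: the kernel in top degree is $k$ with trivial action. Then $\Psi^H(x)$ is homologically $k$ concentrated in degree $h_x(H)$, and $\iota^H_x$ hits a generator. That is exactly the content of \Cref{prop:forerunner_props}(a), so the work reduces to invoking it carefully and checking that the restriction along the quotient to $\on{D}_b$ preserves the isomorphism.
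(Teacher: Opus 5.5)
Your proof is correct and follows essentially the reasoning behind this result. The paper only cites \cite{Mil25c} for it, but its own argument that $\catM(K)\in U_G(H)$ for the reduced opens runs the same way: forerunners become isomorphisms after applying $\check\Psi^H$, so their cones lie in $\ker\check\Psi^H$. Likewise, its remark that all endotrivials are tt-line bundles rests, as yours does, on surjectivity of $\catB$ for $p$-groups.

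The only difference is cosmetic. You get the covering by placing the whole stratum $\im\check\psi^H$ inside $U_\calM(H)$ and then using $\Spc(\catK(G))=\bigsqcup_H\im\check\psi^H$. The paper's analogous argument places only the closed points $\catM(K)=\ker\check\Psi^K$ in the opens, and then uses that every prime specializes to a closed point and that opens are generalization-closed.
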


    \begin{remark}
        It was incorrectly claimed in a preliminary version of \cite{Mil25c} that the closed point $\catM(K) \in \Spc(\catK(G))$ belongs to $U_\calM(H)$ if and only if $K =_G H$. We will give an example in the sequel explicitly disproving this claim.
    \end{remark}
    
    The upshot is that it makes sense to grade twisted cohomology by $\calM(G) \supseteq \calE_p(G)$; $U_\calM(H)$-locally, such a ring should identify with the usual cohomology ring as before.
    
    \begin{definition}\cite{Mil25c}
        Let $G$ be a $p$-group. The $\calM(G)$-graded \emph{(permutation) twisted cohomology ring} is the following multigraded ring: \[\tH_{\calM}(G) = \tH_{\calM}(G;k) \coloneqq  \bigoplus_{s \in \bbN} \bigoplus_{x \in \calM(G)} \Hom_{\catK(G)}(\bbone, x[-s]).\] 
    \end{definition}

    \begin{remark}
        The ring $\tH_{\calM}(G)$ is graded-commutative in $\bbZ$ and commutative on the nose in $\calM$. Again, this ring may be considered a $\calM$-graded Bredon homology ring. One may consider it $\on{RO}$-graded via $\catB$, however this grading contains redundant information since Galois-conjugate (equivalently Adams-conjugate) real representations have the same dimension functions (see e.g. \cite[Proposition 2.3]{Mil26d}), and therefore are sent to the same endotrivial under $\calB$. 
    \end{remark}
    
    We again have a comparison map. We define our localizations as before. 
    
    \begin{definition}
        Let $S_H \subset \tH_\calM(G)$ be the multiplicative subset generated by all $\iota^H_{x}$ with $x \in \calI(G)$. Define the $\bbZ$-graded ring \[\calO^\sbull_\calM(H) \coloneqq  \big(\tH_\calM(G)[S_H\inv]\big)_{\on{0-twist}}\] as the twist-zero part of the localization of $\tH_\calM(G)$ with respect to $S_H$. Analogously, define the central localization \[\catL_{\catM}(H)\coloneqq  \catK(G)[S_H\inv]\] with respect to the central multiplicative subset $S_H$ (noting that all forerunners occur in even $\bbZ$-degree for $p$ odd). 
    \end{definition}    
    
    \begin{proposition}
        For every $H \in \Sub_p(G)$, the following square commutes:
        \begin{center}
            \begin{tikzcd}
                U_{\calM}(H) = \Spc(\catL_{\calM}(H)) \ar[r, hookrightarrow] \ar[d, "\comp_{\catL_H}"'] & \Spc(\catK(G)) \ar[d, "\comp_G"]\\
                \Spec^h(\calO^\sbull_{\calM}(H)) \ar[r, hookrightarrow]& \Spc(\tH_{\calM}(G))
            \end{tikzcd}
        \end{center}
    \end{proposition}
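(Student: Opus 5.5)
We have to show that the square commutes, which amounts to two things. First, $U_{\calM}(H)$ must agree with $\Spc(\catL_{\calM}(H))$ as subspaces of $\Spc(\catK(G))$. Second, the two composites must agree on each prime.

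\textbf{Step 1: identifying the open.} Central localization $\catK(G)\to\catL_\calM(H)=\catK(G)[S_H\inv]$ is a Verdier quotient by the thick $\otimes$-ideal generated by the cones $\cone(s)$, $s\in S_H$. Standard tt-geometry (Balmer's central localization) identifies its spectrum with the subspace
\[
\{\catP \mid \cone(s)\in\catP \text{ for all } s\in S_H\}=\bigcap_{s\in S_H}\open(s).
\]
Here $\open(s)=\supp(\cone(s))^c$. We have $\open(st)=\open(s)\cap\open(t)$, because $\supp\cone(st)=\supp\cone(s)\cup\supp\cone(t)$ by the octahedral axiom. So it suffices to intersect over the generators $\iota^H_x$, $x\in\calI(G)$. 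This recovers exactly $U_\calM(H)$ as defined in \Cref{thm-opencover-Mgraded}. The subset is open rather than merely an intersection of opens because $\calI(G)$ is finite.

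\textbf{Step 2: a graded comparison map on the localization.} Every $x\in\calM(G)$ is a tt-line bundle on $U_\calM(H)$ by \Cref{thm-opencover-Mgraded}. Concretely, inverting $\iota^H_x$ gives an isomorphism $\bbone\cong x[-h_x(H)]$ in $\catL_\calM(H)$, and $\calM(G)$ is generated by $\calI(G)$ up to the monoid structure. It follows that the $\calM\times\bbZ$-graded endomorphism ring of the unit in $\catL_\calM(H)$ is identified with
\[
\tH_\calM(G)[S_H\inv].
\]
This uses the standard formula for Hom-sets in central localizations: morphisms are fractions $f/s$, with $s$ central in the graded sense, and graded-commutativity holds because the forerunners sit in even degree for odd $p$. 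Multiplying by the invertible forerunners moves any twist to twist zero. The twist-zero part $\calO^\sbull_\calM(H)$ therefore has homogeneous spectrum homeomorphic to $\Spec^h(\tH_\calM(G)[S_H\inv])$. In turn, the latter is the open subset $\{\frakp \mid S_H\cap\frakp=\varnothing\}$ of $\Spec^h(\tH_\calM(G))$. This produces the bottom horizontal map.

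\textbf{Step 3: commutativity on primes.} Take $\catP\in U_\calM(H)$ and let $\catP'$ be its image in $\catL_\calM(H)$. Let $f$ be a homogeneous element, written as $g/s$. Then $\cone(f)\in\catP'$ if and only if $\cone(g)\in\catP$, since $\cone(s)\in\catP$ and cones of composites are built by octahedra from the cones of the factors. Hence the prime $\comp(\catP')$ of the localized ring is the extension of $\comp_G(\catP)$, and its contraction back to $\tH_\calM(G)$ is $\comp_G(\catP)$. Passing to twist zero is compatible with this, because each twist is reached by multiplying by units.

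\textbf{Main obstacle.} The delicate step is Step 2: matching the twist-zero part of the localization with $\Spec^h$ of the full localized multigraded ring. This needs the invertibility, in the localization, of elements in every twist of $\calM(G)$, including nonzero $\bbZ$-degrees. One also has to check that homogeneous primes in twist zero extend uniquely. For this I would use the characterization $\comp\inv(Z(f))=\supp(\cone f)$ together with the fact that a multigraded ring with units in all twists is strongly graded over its twist-zero part.
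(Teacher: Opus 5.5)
Your proposal is correct and is essentially the unwinding of constructions that the paper relies on. The paper gives no separate proof here: the square is recalled from Miller's work, and the reduced analogue \Cref{prop:comm_square_from_localization} is said to follow from the constructions, with the spectrum of the central localization identified with the open exactly as in your Step 1 (cf.\ \Cref{prop:central_locs_are_open_locs}); your Steps 2--3 fill in the routine details correctly. One small imprecision is in Step 2: after inverting $S_H$ the twists range over the group generated by $\calM(G)$ rather than $\calM(G)$ itself (e.g.\ $(\iota^H_x)^{-1}$ has twist $-x$), but this is harmless since only the twist-zero part and the strong grading over it are used.
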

    
    \begin{theorem}\cite{Mil25c}\label{thm-inj-Mgraded}
        Let $G$ be a $p$-group. The comparison map \[\comp_G\colon \Spc(\catK(G))\to \Spec^h(\tH_\calM(G))\] is injective. For $H \leq G$, upon restriction to $U_\calM(H)$, the comparison map \[\comp_{\catL_\calM(H)}\colon\Spc(\catL_\catM(G)) = U_\calM(H) \hookrightarrow \Spec^h(\calO^\sbull_\calM(H))\] is a homeomorphism if $\calO_\calM^\sbull(H)$ is Noetherian (in particular, if $\tH_\calM(G)$ is Noetherian). 
    \end{theorem}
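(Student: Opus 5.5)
We prove the two claims of \Cref{thm-inj-Mgraded} separately. Injectivity uses the conservative family of \Cref{thm:conservative} together with the forerunners of \Cref{cons:forerunners}. The local homeomorphism is Balmer's classical comparison argument applied to the central localizations $\catL_\calM(H)$.

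\textbf{Injectivity.} Take two primes $\catP(H,\frakp)$ and $\catP(K,\frakq)$ with the same image under $\comp_G$.

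First we show $H=_G K$. We use the forerunners $\iota^L_x$ for $x\in\calI(G)$. By \Cref{prop:forerunner_props}(a), $\Psi^H(\iota^H_x)$ is a quasi-isomorphism. Since $\catP(H,\frakp)=(\check\Psi^H)^{-1}(\frakp)$, it follows that $\cone(\iota^H_x)\in\catP(H,\frakp)$. Consequently $\iota^H_x\notin\comp_G(\catP(H,\frakp))$ for every $x$, i.e.\ the point lies in $U_\calM(H)$.

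Conversely, suppose $K\neq_G H$. Since $G$ is a $p$-group, $\catB$ is surjective and h-marks separate subgroups up to conjugacy. So we can find $x\in\calI(G)$ (or a product of such) with $h_x(H)\neq h_x(K)$, or one to which \Cref{prop:forerunner_props}(c) applies. Then $\Psi^K(\iota^H_x)$ is not a quasi-isomorphism. Since it is a map $k\to k[\ast]$ in distinct degrees, or zero, after $\check\Psi^K$ it is nilpotent. Hence $\iota^H_x$ lies in $\comp_G(\catP(K,\frakq))$ but not in $\comp_G(\catP(H,\frakp))$, a contradiction. This separation step is exactly the lifting argument alluded to around \Cref{prop:lifting_lemma}.

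Next we show $\frakp=\frakq$ once $H=K$. Every class in $\on{H}^\sbull(\Weyl{G}{H};k)$, possibly after raising to a power, lifts along $\check\Psi^H$ to an element of $\tH_\calM(G)$ multiplied by a suitable product of forerunners $\iota^H_x$; these forerunners become invertible at primes of the form $\catP(H,-)$. Since $\comp$ recovers $\frakp$ through the homeomorphism $V_{\Weyl GH}\cong\Spec^h\on{H}^\sbull$, the two primes coincide. The existence of these lifts is the main technical input.

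\textbf{Local homeomorphism.} On $\catL_\calM(H)$ every $x\in\calM(G)$ is a line bundle isomorphic to $k[h_x(H)]$ by \Cref{thm-opencover-Mgraded}. Therefore the graded endomorphism ring of the unit of $\catL_\calM(H)$ is exactly $\calO^\sbull_\calM(H)$, and $\comp_{\catL_H}$ is Balmer's $\bbZ$-graded comparison map for this localization. Balmer's theorem \cite{Bal10} makes this map surjective when the graded endomorphism ring is coherent, in particular Noetherian. Injectivity restricts from the first part. For openness, supports of cones of homogeneous elements form a basis of the topology on $\Spc$, so the map is a homeomorphism onto its image.

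\textbf{Main obstacle.} The hard step is separating distinct conjugacy classes $H\neq_G K$ by a single forerunner when h-marks of irreducibles coincide. In that case one must verify the hypotheses of \Cref{prop:forerunner_props}(c), or pass to tensor products of irreducibles, which is where the Borel--Smith structure of h-marks for $p$-groups must be exploited.
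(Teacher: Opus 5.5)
Your proposal has two genuine gaps, plus a third step that you assert without proof. Since the paper only cites this statement from \cite{Mil25c}, the comparisons below are with its analogous arguments, \Cref{thm:injectivity}, \Cref{prop:lifting_lemma} and \Cref{lem-homeo-subring}.

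\textbf{Separating conjugacy classes.} This step fails at your claim that $\check\Psi^K(\iota^H_x)$ is nilpotent whenever it is not an isomorphism. When $h_x(K)>h_x(H)$ it is a class in $\on{H}^{h_x(K)-h_x(H)}(\Weyl{G}{K};k)$, and such a class need not be nilpotent. Take $G=C_2$, $p=2$, $x=u_1\in\calI(G)$, $H=G$, $K=1$. Then $\check\Psi^1(\iota^G_{u_1})$ is the polynomial generator of $\on{H}^\sbull(C_2;k)=k[t]$. Hence $\cone(\iota^G_{u_1})\simeq kC_2[1]$ lies in the generic point $\catP(1,\text{gen})$, and $\iota^G_{u_1}$ lies in neither $\comp_G(\catM(C_2))$ nor $\comp_G(\catP(1,\text{gen}))$. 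This cannot be repaired with a better choice of $x$, because $\catP(1,\text{gen})$ generalizes $\catM(C_2)$.

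You must orient the argument, as \Cref{thm:injectivity} does. Use the forerunner at the point with the larger h-mark, i.e.\ the smaller subgroup. There $\check\Psi^K(\iota^H_x)$ either lands in negative degree or vanishes on the nose.

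Your claim that h-marks separate conjugacy classes is also false. For the subgroups $H,K\le\on{UT}_3(\bbF_4)$ of \Cref{sec-covers}, $(x,0,0)$ and $(x,0,x^2)$ are $G$-conjugate, so $\dim V^H=\dim V^K$ for every real $V$. Moreover, all irreducible forerunners coincide (\Cref{thm:non_stratify_closeds_ex}). So you need the forerunner of a non-irreducible twist, and a tensor product of irreducible forerunners will not do (Corollary in \Cref{sec-covers}). A suitable twist is $\catB(\overline{\bbR G})$, where $\bbR G=\bbR\oplus\overline{\bbR G}$. Its h-mark $[G:L]-1$ is strictly decreasing in $|L|$, so \Cref{prop:forerunner_props}(c) applies.

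\textbf{The within-stratum step.} This rests on a lift that you only assert. The lift is \cite[Proposition 6.1]{Mil25c}; compare \Cref{prop:lifting_lemma}. It is obtained by inflating along $N_G(H)\to\Weyl{G}{H}$ and then forming $\tr^G_{N_G(H)}(f\otimes\tilde\iota^H)$, whose non-identity conjugate summands are killed by $\Psi^H$.

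\textbf{The local homeomorphism.} Balmer's coherence theorem together with injectivity gives only a continuous bijection $U_\calM(H)\to\Spec^h(\calO^\sbull_\calM(H))$. A continuous bijection of spectral spaces need not be a homeomorphism; the identity from the discrete two-point space to the Sierpi\'nski space is an example. You assert that the sets $\supp(\cone(f))$, for $f$ homogeneous, generate the topology of $\Spc(\catL_\calM(H))$. That is exactly the embedding property to be proved. Nothing available says that $\supp(a)$, for an arbitrary compact $a$, is cut out by cones of homogeneous elements.

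The missing input is tt-stratification, as in \Cref{lem-homeo-subring}:
\begin{enumerate}
\item $\catD(G)|_{U_\calM(H)}$ is tt-stratified.
\item Noetherianity of $\calO^\sbull_\calM(H)$ gives the local-to-global principle for the BIK support $\supp_{\calO}$.
\item Injectivity of $\comp$ identifies $\Gamma_{\comp(\catP)}$ with $\on{g}(\catP)\otimes-$.
\item Hence $\comp(\supp a)=\supp_{\calO}(a)$ is specialization closed for every compact $a$.
\item So $\comp$ is a bijection on specialization-closed subsets, and therefore a homeomorphism.
\end{enumerate}
The paper uses the same mechanism, through a finitely generated subalgebra and \Cref{thm-homeo-fullcohomologyring}, to remove the Noetherian hypothesis in \Cref{thm-immersion-Mgraded}.
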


    In particular, Noetherianity of the $\calM$-graded twisted cohomology ring holds in the following case. 

  \begin{theorem}\cite{Mil25c}
        The $\calM$-graded twisted cohomology ring $\tH_\calM(G)$ is a finitely generated $k$-algebra, hence Noetherian, when $G$ is a Dedekind group (i.e., a group with all subgroups normal) or $D_8$. 
    \end{theorem}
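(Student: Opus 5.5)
The plan is to reduce finite generation of $\tH_\calM(G)$ to a finiteness statement for modules over a subring generated by finitely many forerunners together with an ordinary group cohomology ring. I would first treat Dedekind groups, then $D_8$ by hand. For a $p$-group $G$, $\calM(G)$ is generated as a monoid by $\calI(G)$ together with degree shifts, and $\calI(G)$ is finite. So it suffices to show two things: that $\bigoplus_{s}\Hom(\bbone,x[-s])$ is finitely generated over the subalgebra $A\subseteq \tH_\calM(G)$ generated by all forerunners $\iota^H_{x}$ with $x\in\calI(G)$, $H\in\Sub_p(G)$; and that this holds uniformly, i.e.\ the total ring is generated over $A$ by finitely many elements of bounded twist. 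Passing to $\bbN$-twists of the finitely many generators then gives finite generation of the whole ring.

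For $G$ Dedekind, every subgroup $H$ is normal. Hence for an irreducible real representation $V$ we have $V^H\in\{V,0\}$, according as $H\leq\ker V$ or not. The endotrivial $x_V=\catB(S^V)$ is therefore inflated from $G/\ker V$ (cyclic or $Q_8$), which acts freely on $S(V)$. Concretely, $x_V$ is a truncated periodic resolution $k(G/N)^{\oplus}\to\cdots\to k$ with $N=\ker V$, generalizing $u_N$. Its only forerunners are the two maps $\iota^G_{x_V}\colon\bbone\to x_V$ (the inclusion of $k$ in degree $0$) and the top-degree class $\iota^1_{x_V}\colon\bbone\to x_V[-\dim V]$.

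I would then induct on the twist. For $y\in\calM(G)$, I would take the triangle $\bbone\xrightarrow{\iota^G_{x_V}}x_V\to C_V\to$, in which $C_V$ is built from copies of $k(G/N)$ only. Tensoring with $y$ and applying $\Hom(\bbone,-[-s])$ gives a long exact sequence. In it, the image of multiplication by $\iota^G_{x_V}$ from the $y$-part is controlled by induction. The remaining term $\Hom(\bbone,C_V\otimes y[-s])$ is, by adjunction, a filtered piece of the analogous twisted groups for $N$ evaluated on $\Res^G_N y$; note $\Res^G_N x_V$ is a shift of the unit. Since $N$ is again Dedekind (or trivial), an induction on $|G|$ shows these are finitely generated modules over $\tH_\calM(N)$. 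By Evens--Venkov-type finiteness of transfer/restriction, they are then finitely generated over the image of $A$. Lifting finitely many generators through the connecting map gives finitely many new generators of $\tH_\calM(G)$ in bounded twist. The top-forerunner $\iota^1_{x_V}$ is what makes the periodicity of $C_V$ in high degrees absorb everything beyond a bounded range.

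For $D_8$ the only new phenomenon is the $2$-dimensional irreducible $V$. Its fixed points under the non-central subgroups of order $2$ are lines, so $x_V$ has a three-step h-mark function and four forerunners. Here I would write down an explicit $D_8$-CW structure on $S(V)$, namely the octagon with its reflection axes, and compute $\Hom(\bbone,x_V^{\otimes a}\otimes y[-s])$ by the same triangle argument. This time the cone involves $k(G/C_2)$ and $k(G/C_2')$ terms, and the induction lands in the abelian subgroups $C_2\times C_2$ and $C_4$, which are covered by the Dedekind case.

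The main obstacle I expect is the uniformity in the twist. One must show that the connecting homomorphisms do not produce infinitely many indecomposable generators as the exponent of $x_V$ grows. To handle this I would first establish that multiplication by $\iota^1_{x_V}$ induces an isomorphism $\Hom(\bbone,x_V^{\otimes a}\otimes y[-s])\to\Hom(\bbone,x_V^{\otimes(a+1)}\otimes y[-s-\dim V])$ once $a$ is large relative to $s$. This should be a stabilization argument analogous to the periodicity in the $\calE_p$ case, and it is where the Dedekind hypothesis (all fixed-point spheres are $V$ or empty) is used most essentially.
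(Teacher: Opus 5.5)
The paper does not prove this statement; it is quoted from \cite{Mil25c}. Your sketch therefore has to stand on its own, and as written it has a genuine gap exactly where the content lies.

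\textbf{Where the gap is.} Your reduction is fine in outline. Suppose that for each nontrivial irreducible $V$ the module $M_V=\bigoplus_{y\in\calM(G),\,s}\Hom_{\catK(G)}(\bbone,C_V\otimes y[-s])$ is finitely generated over the forerunner subalgebra $A$, which is Noetherian, being a finitely generated commutative $k$-algebra. Then the long exact sequence for $\iota^G_{x_V}$ lets you lift finitely many generators of the image of $\bigoplus\Hom(\bbone,x_V\otimes y[-s])\to M_V$, and an induction on $h_y(1)$ finishes. But nothing you write proves this finiteness.

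Note that $C_V$ is inflated from a complex of free $k(G/N)$-modules, not induced from $N$. Filtering it by its terms $k(G/N)=\mathrm{Ind}_N^G k$ gives graded pieces $\bigoplus_{y\in\calM(G),\,s}\Hom_{\catK(N)}(\bbone,\Res^G_N y[-s+i])$, on which $A$ acts through $\Res^G_N$. These pieces are indexed by $\calM(G)$. After correcting shifts, restriction $\calM(G)\to\calM(N)$ is neither injective nor in general surjective: for $C_4\geq C_2$ its image is the even tensor powers of the generator of $\calM(C_2)$. So these pieces are not $\tH_\calM(N)$-modules at all. In particular, the inductive hypothesis that $\tH_\calM(N)$ is a finitely generated algebra gives no finiteness over $\Res^G_N(A)$.

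\textbf{Why the cited finiteness does not help.} The ``Evens--Venkov-type finiteness'' you invoke is precisely the missing theorem. It does not follow from Evens--Venkov, since these are Bredon-type groups that see all the fixed-point data of $N$, not group cohomology. Finiteness over $\Res^G_N(\tH_\calM(G))$ would not help either: extracting a finitely generated kernel from the long exact sequence would then require the Noetherianity you are trying to prove.

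To close the induction you would need a stronger hypothesis, for instance that $\tH_\calM(N)$ is module-finite over its own forerunner subalgebra. You would also need a proof that forerunners of $N$ are integral over restrictions of forerunners of $G$. The latter is not formal, because restrictions of forerunners are generally not products of forerunners. Take $D_8$, its $2$-dimensional $V$, and a Klein four subgroup $E$ with non-central order-$2$ subgroups $N_1,N_2$. Under $\Res^{D_8}_E x_V\simeq u_{N_1}\otimes u_{N_2}$, one degree-$1$ forerunner of $x_V$ becomes a sum of two distinct products of forerunners of the $u_{N_i}$, and the other becomes $0$. Compare also \Cref{sec-covers}.

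\textbf{The other ingredients.} They do not fill this in. The stabilization under $\iota^1_{x_V}$ is unproved. Even if it holds, it only controls growth along one direction of the multigraded twist monoid, so it does not yield finiteness over $A$.

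The $D_8$ part is a plan for a computation rather than a computation, and it inherits the same gap. Moreover, for the $2$-dimensional $V$ the cells of $S(V)$ have isotropy the non-central subgroups of order $2$ and the trivial group. By adjunction the triangle therefore sends you to those subgroups, not to $C_2\times C_2$ and $C_4$; the latter only arise from the $1$-dimensional representations.
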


    \begin{remark}
    Even if one manages to show that $\tH_\calM(G)$ is Noetherian in full generality, a problem with this setting arises: the $\calM$-graded twisted cohomology ring is seemingly too much to handle. Analogously, Bredon homology is already known to be challenging to compute (without machine assistance) in generality. See e.g. \cite{Geo19}.
    \end{remark}

    \section{Reduced twisted cohomology}
    
    We now introduce the `reduced' permutation twisted cohomology ring, which is a more tractable version of the two previous incarnations of the permutation twisted cohomology ring, while still captures all the geometry of the derived category of permutation modules. The key point here is that to capture the geometry of $\catK(G)$, it seems that one does not need to twist by all oriented effective endotrivials, but rather, by enough that `see' the different $p$-subgroups. We choose the endotrivial $x\reg$; see \Cref{def:reg_oriented_endotriv}. Though we stress that this is not the only endotrivial for which one can obtain the results in the sequel (another candidate for instance could be the endotrivial obtained by tensoring all the irreducible endotrivials), this is a rather convenient one for a variety of reasons.

    \begin{definition}\label{def-red-tcring}
        Let $G$ be a finite group. We define the bigraded \emph{reduced (permutation) twisted cohomology ring} $\tH(G)$ as follows. \[\tH(G) \coloneqq  \tH(G, k) = \bigoplus_{s,n\in \bbN} \Hom_{\catK(G)}(\bbone, x\reg^{\otimes n}[-s]).\]
    \end{definition}

    \begin{remark}
       Note that the reduced twisted cohomology ring is graded-commutative if we only consider the shift, and commutative on the nose on if we only consider the `twist'.
   \end{remark}
    
    By the framework of \cite[Appendix A]{Gal25}, we have a well-defined, continuous comparison map. 
    
    \begin{proposition}\label{prop-comparison-reducedcohomology}
        Let $G$ be a finite group. We have a continuous comparison map \[\comp_G\colon \Spc(\catK(G)) \to \Spec^h(\tH(G))\] defined by \[\catP \in \Spc(\catK(G)) \mapsto \langle f\in \tH(G) \text{ \emph{homog.}}\mid \cone(f) \not\in \catP\rangle.\] It satisfies \[\comp_G\inv(Z(f)) = \supp(\cone(f)) = \{\catP \in \Spc(\catK) \mid f\text{ is not invertible in }\catK(G)/\catP\}.\] 
    \end{proposition}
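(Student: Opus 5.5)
The plan is to deduce this from the general framework of multigraded comparison maps in \cite[Appendix A]{Gal25}, specialized to the grading monoid $\bbN\times\bbN$ indexed by $(s,n)$ with twisting object $x\reg$. We have to check that $\tH(G)$ is a graded-commutative ring in the appropriate sense, that the proposed set $\rho(\catP)$ is a homogeneous prime, and that the preimage formula holds. Continuity then follows formally.

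First, the ring structure. For homogeneous $f\colon\bbone\to x\reg^{\otimes n}[-s]$ and $g\colon\bbone\to x\reg^{\otimes m}[-t]$, the product is
\[fg\colon \bbone\cong\bbone\otimes\bbone\xrightarrow{f\otimes g} x\reg^{\otimes n+m}[-s-t].\]
Commutativity up to sign is controlled by the symmetry on $x\reg^{\otimes n}\otimes x\reg^{\otimes m}$. Since $x\reg$ is invertible, its switch map $\tau_{x\reg,x\reg}$ is multiplication by a unit $\epsilon\in\End(\bbone)=k$ with $\epsilon^2=1$. This is exactly the setting of \cite[Appendix A]{Gal25}, where the twist acts by a sign, so homogeneous elements are central up to a unit. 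This is also the content of the preceding remark on graded-commutativity.

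Next, primality. For a prime $\catP$, the set $\rho(\catP)$ is generated by homogeneous $f$ with $\cone(f)\notin\catP$. I would show that a homogeneous $f$ lies in $\rho(\catP)$ exactly when $\cone(f)\notin\catP$, i.e.\ that these elements already form an ideal. The key identity is that $\cone(fg)$ lies in the thick subcategory generated by $\cone(f)\otimes x\reg^{\otimes m}[-t]$ and $\cone(g)$, by the octahedral axiom. Conversely, $\cone(f)$ is a summand of $\cone(fg)\otimes\cone(f)$-type combinations. More concretely, I would use the standard equivalence
\[\cone(f)\in\catP \iff f \text{ is invertible in } \catK(G)/\catP,\]
which holds because tensoring with invertible objects preserves $\catP$. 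This gives the product rule: $fg$ is invertible modulo $\catP$ if and only if both $f$ and $g$ are, since the homogeneous elements commute up to units. That yields multiplicativity of the complement and primality. For sums of homogeneous elements of the same degree, being in $\rho(\catP)$ means being non-invertible in the local category $\catK(G)/\catP$. Here I would invoke \cite[Appendix A]{Gal25}, or Balmer's argument in \cite{Bal10}: the graded endomorphism ring of $\bbone$ in the local category $\catK(G)/\catP$, twisted by an invertible object, is graded-local. Hence its non-invertible homogeneous elements form the unique homogeneous maximal ideal, and $\rho(\catP)$ is its preimage. This step, graded-locality of the twisted central ring in a local tt-category, is the main point. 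I expect it is the only real obstacle, and I would cite Gallauer's appendix rather than reprove it.

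Finally, the preimage formula. We have $\catP\in\comp_G\inv(Z(f))$ if and only if $f\in\rho(\catP)$, if and only if $\cone(f)\notin\catP$, if and only if $\catP\in\supp(\cone(f))$. Since the sets $Z(f)$ form a basis of closed sets of $\Spec^h(\tH(G))$ and each $\supp(\cone(f))$ is closed, the map $\comp_G$ is continuous.
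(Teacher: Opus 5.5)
Your proposal is correct and follows the paper's route: the paper simply invokes the multigraded comparison-map framework of \cite[Appendix A]{Gal25}, and you additionally spell out the Balmer-style verification (cones versus invertibility in $\catK(G)/\catP$, graded-locality of the twisted central ring of the local category $\catK(G)/\catP$, and the preimage formula that gives continuity). The only loose point is your remark that $\cone(f)$ is a summand of ``$\cone(fg)\otimes\cone(f)$-type combinations''; it is imprecise but superfluous, since your invertibility argument modulo $\catP$ already shows that $fg$ is invertible there if and only if both $f$ and $g$ are.
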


   \begin{remark}
    In fact, for any homogeneous subalgebra of $\tH(G)$, we obtain a comparison map that factors through $\comp_G$. This fact will be used regularly in the sequel.   
   \end{remark}

    \begin{notation}
        Because $x\reg$ is orientable by construction, we have forerunner homomorphisms. Let $\iota_{x\reg}^H$ denote the forerunner for $x\reg$ at $H$. For $i\geq0$ we set  \[\iota_i(H) =\iota_{G,i}(H) \coloneqq  (\iota_{x\reg}^H)^{\otimes i} \in \tH(G).\] Note that for $i > 1$, $\iota_i(H)$ need not be a forerunner in the sense of \Cref{cons:forerunners}, but $\iota_i(H)$ still satisfies all the properties of \Cref{prop:forerunner_props}. If the context is clear, we simply write $\iota(H) \coloneqq  \iota_1(H)$.
    \end{notation}

    \begin{remark}
        For $p$-groups $S$, we do not immediately have a graded algebra inclusion $\tH(S) \subset \tH_\calM(S)$, as all elements $x\in \calM$ have $h_x(S) = 0$, but $h_{x\reg}(S) = 1$ as $\bbR S$ admits $\bbR$ as a direct summand. This may be amended by setting $x\reg' \coloneqq  x\reg[-1]$, then we have $x\reg' \in \calM$ as $h_{x\reg'}(G) = 0$. Then one may identify \[\Hom_{\catK(S)}(\bbone, x\reg^{\otimes i}[s]) \cong \Hom_{\catK(S)}(\bbone,(x\reg')^{\otimes i}[s - i]).\]
    \end{remark}

    \subsection{tt-functors} The comparison map is compatible with tt-functors to an extent, as well as our central localizations. 

    \begin{observation}
        Let $F\colon \catK(G) \to \catK(G')$ be  a tt-functor sending the regular endotrivial $x\reg(G)$ to $x\reg(G')^{\otimes d}$ for some $d \in \bbN_+$. Observe that $F$ induces a ring homomorphism 
    \[F\colon \tH(G) \to \tH(G')\]
    that is homogeneous; indeed, the image of a homogeneous element of degree $(n,s)$ is homogeneous of degree $(d\cdot n, s)$, where $n$ corresponds to the part graded by $x\reg$ and $s$ to the shift. 
    Thus, we get an induced spectral map on twisted cohomology, 
    \[\Spec^h(F)\colon \Spec^h(\tH(G')) \to \Spec^h(\tH(G)).\]
    \end{observation}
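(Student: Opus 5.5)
The plan is to build the map degree by degree and then check that it is multiplicative. Fix once and for all an isomorphism $\varphi\colon F(x\reg(G)) \xrightarrow{\sim} x\reg(G')^{\otimes d}$. Also use the structure isomorphisms of the tt-functor $F$: $F(\bbone)\cong \bbone$, $F(a\otimes b)\cong F(a)\otimes F(b)$ and $F(a[s])\cong F(a)[s]$, all natural and coherent with the associativity, unit, symmetry and suspension constraints. Write $x\coloneqq x\reg(G)$ and $x'\coloneqq x\reg(G')$.

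For a homogeneous $f\colon \bbone \to x^{\otimes n}[-s]$ of degree $(n,s)$, define $F(f)$ as the composite
\[
\bbone \cong F(\bbone) \xrightarrow{F(f)} F(x^{\otimes n}[-s]) \cong F(x)^{\otimes n}[-s] \xrightarrow{\varphi^{\otimes n}[-s]} (x')^{\otimes dn}[-s].
\]
Here the last identification $(x'^{\otimes d})^{\otimes n} = x'^{\otimes dn}$ uses only associativity constraints. This gives a $k$-linear map $\Hom_{\catK(G)}(\bbone, x^{\otimes n}[-s]) \to \Hom_{\catK(G')}(\bbone, x'^{\otimes dn}[-s])$, since $F$ is additive and hence $k$-linear on hom-groups. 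Extending additively, we get a $k$-linear map $\tH(G)\to\tH(G')$ that sends degree $(n,s)$ to degree $(dn,s)$. The unit $\id_\bbone$ in degree $(0,0)$ goes to $\id_\bbone$, because $F$ is unital.

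Next I would verify multiplicativity. The product of $f\in(n,s)$ and $g\in(m,t)$ is the composite
\[
\bbone\cong\bbone\otimes\bbone \xrightarrow{f\otimes g} x^{\otimes n}[-s]\otimes x^{\otimes m}[-t]\cong x^{\otimes(n+m)}[-s-t],
\]
where the last isomorphism moves suspensions past each other and carries the usual sign. Applying $F$ and using the monoidality of $F$, $F(f\cdot g)$ agrees with $F(f)\otimes F(g)$ followed by $F$ of the identification. The compatibility of $F$ with the suspension and symmetry constraints (part of being a tt-functor) shows that $F$ of this identification is the corresponding identification in $\catK(G')$, with the same sign. Finally, $\varphi^{\otimes n}\otimes\varphi^{\otimes m}=\varphi^{\otimes(n+m)}$ up to associators, by functoriality of $\otimes$.

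This bookkeeping is the only real obstacle. One has to make sure that no symmetry isomorphism on a tensor power of an invertible object sneaks in. Such an isomorphism could act by a scalar $\pm1$ and would break strict multiplicativity. I would avoid this by fixing, throughout, the convention that the identifications $x^{\otimes n}\otimes x^{\otimes m}\cong x^{\otimes(n+m)}$ are built from associators only. The only braiding then used is the one commuting shifts, which $F$ preserves by definition.

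Once $F\colon\tH(G)\to\tH(G')$ is a unital homogeneous ring homomorphism, the rest is formal. For a homogeneous prime $\frakq\subset\tH(G')$, the preimage $F\inv(\frakq)$ is a prime ideal. It is proper because $1\mapsto 1\notin\frakq$. It is homogeneous because $F$ maps homogeneous components into homogeneous components, so the homogeneous components of an element of $F\inv(\frakq)$ map to the homogeneous components of an element of $\frakq$. Hence $\Spec^h(F)(\frakq)\coloneqq F\inv(\frakq)$ is well defined. It is continuous (indeed spectral) because $\Spec^h(F)\inv(Z(f)) = Z(F(f))$ for homogeneous $f$, and these sets form a basis of closed sets.
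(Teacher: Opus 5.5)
Your proposal is correct and follows the paper's approach. The paper's entire justification is the remark that $F$ sends the $(n,s)$-component to the $(dn,s)$-component, so the induced ring map pulls back homogeneous primes. You supply the bookkeeping the paper leaves implicit: fixing $\varphi$, using associator-only identifications so no stray signs appear, and checking that $F^{-1}(\frakq)$ is a homogeneous prime and that $\Spec^h(F)$ is continuous.

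Two small points. Additivity of $F$ only gives $\bbZ$-linearity, not $k$-linearity; this suffices for a ring homomorphism, and $k$-linearity does hold for the restriction and modular fixed point functors the paper uses. Also, homogeneity of $F^{-1}(\frakq)$ relies on the degree map $(n,s)\mapsto(dn,s)$ being injective, which is exactly where $d\geq 1$ is needed.
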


    \begin{remark}\label{rem-ringmap-ttfunctors}
        Restriction and modular fixed points both induce spectral maps on twisted cohomology. Indeed, the result can be checked from the formulas
        \[
        \Res^G_H (\bbR G) = (\bbR H)^{\oplus [G:H]} \quad \mbox{and} \quad (\bbR G)^H  = (\bbR\Weyl{G}{H})^{\oplus [G:N_G(H)]}
        \]
         via $\catB\colon \on{RO}(G) \to \Pic(\catK(G))$. However, inflation does \emph{not} induce a spectral map on reduced twisted cohomology, as the $\Inf^G_{G/N}(x\reg(G/N)) \neq x\reg(G)$.
    \end{remark}

    \begin{proposition}\label{prop:mod_fix_pts_surj}
        Let $H \in \Sub_p(G)$ be a normal subgroup. Then the induced map \[\Psi^H\colon \tH(G) \to \tH(G/H)\] is well-defined and surjective. Consequently, we induced map on spectra \[\psi^H\colon \Spec^h(\tH(G/H)) \to \Spec^h(\tH(G)))\] is injective. 

    \end{proposition}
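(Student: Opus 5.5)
The plan is to handle well-definedness via the compatibility of $\Psi^H$ with $\catB$, to obtain surjectivity by splitting $\Psi^H$ with inflation twisted by a forerunner, and then to deduce injectivity on spectra formally.

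\textbf{Well-definedness.} Since $H \trianglelefteq G$, we have $N_G(H)=G$, so the formula in \Cref{rem-ringmap-ttfunctors} becomes $(\bbR G)^H \cong \bbR(G/H)$ as $G/H$-representations. I will use that modular fixed points are compatible with $\catB$, i.e.\ $\Psi^H(\catB(S^V)) \cong \catB(S^{V^H})$. This holds because $\Psi^H$ of the cellular chains of a $G$-CW complex is the cellular chains of its $H$-fixed points. It gives an isomorphism $\Psi^H(x\reg(G)) \cong x\reg(G/H)$, and I fix one such isomorphism. By the Observation preceding \Cref{rem-ringmap-ttfunctors}, applied with $d=1$, we obtain a bidegree-preserving ring map $\tH(G)\to\tH(G/H)$.

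\textbf{Surjectivity.} I split $\Psi^H$ using inflation $\Inf\colon \catK(G/H)\to\catK(G)$, which satisfies $\Psi^H\circ\Inf\cong\id$ since $X^H=X$ for inflated $G$-sets. Inflation does not preserve the twist, so it must be corrected.
\begin{enumerate}
\item Decompose $\bbR G^{\oplus 2'} = \Inf(\bbR(G/H)^{\oplus 2'}) \oplus W$ with $W^H=0$; $W$ is a sum of two copies of the same representation when $p>2$. Then $x\reg(G)\cong \Inf(x\reg(G/H))\otimes y$ with $y\coloneqq\catB(S^W)$ and $h_y(H)=0$.
\item Check that $S^W$ is orientable mod $p$. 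For $p=2$ this is automatic. For $p$ odd, $W=W_0\oplus W_0$ carries a complex structure, so every element acts on each $W^K$ with positive determinant.
\item Hence the forerunner $\iota^H_y\colon \bbone\to y$ exists, in degree $h_y(H)=0$. By \Cref{prop:forerunner_props}(a), $\Psi^H(\iota^H_y)$ is a quasi-isomorphism $k\to \Psi^H(y)\simeq k$. This is a nonzero scalar, so after rescaling it becomes the identity under our identifications.
\item Given a homogeneous $f\colon\bbone\to x\reg(G/H)^{\otimes n}[-s]$, set $\tilde f \coloneqq \Inf(f)\otimes(\iota^H_y)^{\otimes n}\colon \bbone\to x\reg(G)^{\otimes n}[-s]$. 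Then $\Psi^H(\tilde f)=f\otimes \id = f$ under the fixed identifications, which proves surjectivity.
\end{enumerate}

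\textbf{Injectivity on spectra.} This is formal. A surjective homogeneous ring map $\tH(G)\twoheadrightarrow\tH(G/H)$ identifies $\tH(G/H)$ with $\tH(G)/I$ for a homogeneous ideal $I$. Then $\Spec^h(\tH(G/H))\to\Spec^h(\tH(G))$ is the inclusion of the closed subset $Z(I)$, and in particular it is injective: a prime $\frakq$ is recovered as the image of its preimage.

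\textbf{Main obstacle.} The delicate part is bookkeeping. The isomorphisms $\Psi^H(x\reg(G))\cong x\reg(G/H)$, $\Psi^H\Inf\cong\id$, and $x\reg(G)\cong\Inf(x\reg(G/H))\otimes y$ must be chosen compatibly, so that $\Psi^H(\tilde f)$ is literally $f$ rather than $f$ composed with some automorphism. Any such discrepancy is an automorphism of the invertible object $x\reg(G/H)^{\otimes n}[-s]$, hence a unit of $\End(\bbone)$ that can be absorbed. To make this precise I would argue that $\End_{\catK(G/H)}(x)\cong\End(\bbone)$ is a local ring, so these units simply rescale $\tilde f$.
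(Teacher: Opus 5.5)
Your proof is correct, and on the key step it is more careful than the paper's.

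\textbf{Where you agree with the paper.} Well-definedness via $(\bbR G)^H\cong\bbR(G/H)$ and compatibility of $\Psi^H$ with $\catB$ is exactly the paper's argument. The passage to injectivity on spectra is also the same.

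\textbf{Where you differ.} For surjectivity, the paper only argues that $\Psi^H\circ\Inf^G_{G/H}=\id$ makes $\Psi^H$ ``essentially surjective and full''. It then concludes that every bidegree is hit. This does not follow:
\begin{enumerate}
\item A section of $\Psi^H$ only gives surjectivity on Hom-sets between inflated objects. But $x\reg(G)$ is not inflated, as the paper itself notes in \Cref{rem-ringmap-ttfunctors}.
\item $\Psi^H$ is in fact not full, even for normal $H$. Take $G=H=C_p$ and $b=(k\xrightarrow{N}kC_p)$ with $k$ in degree $1$. Then $\Psi^H(b)\cong k[1]$. But every chain map $k[1]\to b$ vanishes because the norm $N$ is injective on $k$. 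Hence $\Hom_{\catK(G)}(k[1],b)=0$, while $\Hom_{\catK(1)}(k[1],k[1])=k$.
\end{enumerate}
Your decomposition $x\reg(G)\cong\Inf(x\reg(G/H))\otimes y$ with $\Psi^H(y)\cong\bbone$ supplies exactly what is missing. Together with the degree-zero class $\iota^H_y\colon\bbone\to y$, which becomes invertible after $\Psi^H$, it makes the lifting work. The paper's shortcut buys only brevity.

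\textbf{Two small corrections.}
\begin{enumerate}
\item The orientability check in your step (b) is unnecessary. Since $W^H=0$, also $W^G=0$, so $\iota^H_y$ is simply the map on reduced cellular chains induced by the $G$-equivariant inclusion $S^0=(S^W)^H\hookrightarrow S^W$ (an Euler class). Applying $\Psi^H$ to it gives the identity of $k$ on the nose, with no orientation input.
\item In your bookkeeping paragraph, the relevant fact is not that $\End(\bbone)$ is local. It is that $\End_{\catK(G/H)}(\bbone)=k$, so every automorphism of the invertible object $x\reg(G/H)^{\otimes n}[-s]$ is a nonzero scalar. That is what lets you rescale $\tilde f$ so that $\Psi^H(\tilde f)=f$ exactly.
\end{enumerate}
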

    \begin{proof}
        As observed in \Cref{rem-ringmap-ttfunctors}, we have that $(\bbR G)^H = \bbR (G/H)$, and consequently by $\catB$ (using biset functoriality), $\Psi^H(x\reg(G)) \cong x\reg(G/N)$. Thus,  well-definedness follows. Now, since $\Psi^H \circ \Inf^G_{G/H} = \id_{\catK(G/H)}$, we have that $\Psi^H$ is essentially surjective and full. By our choice of grading, 
        \[
        \Psi^H\colon \tH(G) \to \tH(G/H)
        \]
        does not miss any degrees, therefore is a surjective ring homomorphism.
    \end{proof}
    
    \begin{remark}
        \Cref{prop:mod_fix_pts_surj} more generally holds for $\Psi^H\colon \on{R}^{\sbull\sbull}(G) \to \on{R}^{\sbull\sbull}(G/H)$ where $\on{R}^{\sbull\sbull}(G)$ is a graded subalgebra of $\tH(G)$ and $\on{R}^{\sbull\sbull}(G/H) = \Psi^H(\on{R}^{\sbull\sbull}(G))$.
    \end{remark}

    \subsection{Opens and localizations} We now define an open cover of the spectrum and their corresponding localizations. 
    
    \begin{definition}
        Let $G$ be a finite group. Set \[U(H) = U_G(H)\coloneqq  \open(\iota(H)) = \supp(\cone(\iota(H)))^c\subseteq \Spc(\catK(G)),\] the open locus where $\iota(H)$ is invertible, or equivalently the open locus where $\iota_i(H)$ is invertible for any $i \in \bbN_+$.
    \end{definition}
    
    \begin{proposition}
        Let $G$ be a finite group and $H, K \in \Sub_p(G)$. The closed point $\catM(K) \in V_{\Weyl{G}{K}}$ belongs to $U_G(H)$ if and only if $K=_G H$. In particular, $\{U_G(H)\}_{H \in \Sub_p(G)/G}$ is a quasi-compact open cover of $\Spc(\catK(G))$.
    \end{proposition}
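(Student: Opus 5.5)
Recall that $\catM(K) = \catP(K,\frakm)$ is the image under $\check\psi^K$ of the unique closed point of $V_{\Weyl{G}{K}}$, namely the zero ideal of $\on{D}_b(k(\Weyl{G}{K}))$. The plan is to rephrase membership in $U_G(H)$ as a quasi-isomorphism condition. By \Cref{prop-comparison-reducedcohomology}, $\catM(K)\in U_G(H)$ if and only if $\cone(\iota(H))\in \catM(K) = (\check\Psi^K)\inv(0)$. This holds exactly when $\check\Psi^K(\iota(H))$ is an isomorphism in $\on{D}_b(k(\Weyl{G}{K}))$, i.e. when $\Psi^K(\iota(H))$ is a quasi-isomorphism.

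For the direction $K=_G H$, I would use \Cref{prop:forerunner_props}(b) to reduce to $K = H$. Then \Cref{prop:forerunner_props}(a) says $\Psi^H(\iota^H_{x\reg})$ is a quasi-isomorphism, so $\catM(H)\in U_G(H)$.

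For the converse, suppose $K \neq_G H$. I would compare h-marks: $h_{x\reg}(H) = 2'[G:H]$ and $h_{x\reg}(K) = 2'[G:K]$.
\begin{itemize}
\item If $|H|\neq |K|$, then the h-marks differ. Part (d) of \Cref{prop:forerunner_props} gives directly that $\Psi^K(\iota(H))$ is not a quasi-isomorphism. Concretely, its source $k$ has homology in degree $0$, while $\Psi^K(x\reg[-h(H)])$ has homology in degree $h(K)-h(H)\neq 0$.
\item If $|H|=|K|$ but $K\neq_G H$, I would invoke part (c). Any $p$-subgroup $L$ containing conjugates of both is strictly larger than $H$, so $h(L) = 2'[G:L] < h(H) = h(K)$. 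The hypotheses of (c) therefore hold, and $\Psi^K(\iota(H)) = 0$. A zero map $k\to k[0]$ (up to shift) is not a quasi-isomorphism, since the target is nonzero in $\on{D}_b$.
\end{itemize}
This is the step needing the most care. If no such $L$ exists, the hypothesis of (c) is vacuous, which I read as fine. I would also double-check that (c) applies to $x\reg$, which is orientable by construction.

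For the covering statement, every prime $\catP$ contains a closed point, since $\Spc$ of an essentially small tt-category is spectral and every point specializes to a closed point. The closed points are exactly the $\catM(K)$, by the classification of points recalled above together with \cite{BG25}. Each open $U_G(K)$ contains $\catM(K)$ and is closed under generalization, so $\catP\in U_G(K)$ whenever $\catM(K)\in \overline{\{\catP\}}$. Hence $\Spc(\catK(G)) = \bigcup_{K} U_G(K)$.

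Finiteness of $\Sub_p(G)/G$ gives a finite cover. Each $U_G(H)$ is quasi-compact, being the complement of the support of a single object.
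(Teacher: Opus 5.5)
Your proposal is correct and follows the paper's proof. You rephrase membership of $\catM(K)$ in $U_G(H)$ as $\Psi^K(\iota(H))$ being a quasi-isomorphism, settle that via \Cref{prop:forerunner_props}, and conclude the covering statement from every point specializing to some closed point $\catM(K)$. You usefully make explicit the case split between parts (d) and (c) using $h_{x\reg}(L)=2'[G:L]$; the vacuous case you worry about never actually arises, since a Sylow $p$-subgroup always contains conjugates of both $H$ and $K$.
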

    \begin{proof}
        One may determine from \Cref{prop:forerunner_props} that $\Psi^K(\iota(H))$ is a quasi isomorphism if and only if $H =_G K$. Since $\catM(K) = \ker(\check\Psi^K)$ and $\cone(\iota(K)) \in \ker(\check\Psi^K)$ if and only if $\iota(K)$ is a quasi-isomorphism, the first statement follows. Since every point of the spectrum specializes to a closed point, which are precisely those of the form $\catM(K)$ for $K \in \Sub_p(G)$, the result follows.
    \end{proof}
    
    \begin{definition}\label{def:localizations}
        Let $H \in \Sub_p(G)$, and let $S_{H} \subset \tH(G)$ be the multiplicative subset generated by $\iota(H)$. 
        
        \begin{enumerate}
            \item We define the $\bbZ$-graded ring 
            \[\calO^\sbull(H) = \calO^\sbull_{G}(H) \coloneqq  \big(\tH(G)[S_{H}\inv]\big)_{\on{0-twist}}\] as the twist-zero part of the localization of $\tH(G)$ with respect to $S_{H}$. 
            \item We define the central localization \[\catL(H) = \catL_{G}(H) \coloneqq  \catK(G)[S_{H}\inv]\] with respect to the central multiplicative subset $S_{H}$. This category has $\bbZ$-graded endomorphism ring $\calO^\sbull_{G}(H)$. See \cite{Bal10} and \cite{DS14} for details. 
        \end{enumerate}
        
    \end{definition}
    
    Every central localization, upon idempotent-completion, identifies with the familiar localization with respect to a quasi-compact open.
    
    \begin{proposition}\label{prop:central_locs_are_open_locs}
        Let $G$ be a finite group. Then the localization $\catK(G)|_{U_G(H)} = (\catK/\catK_{U_G(H)^c})^\natural$ is tt-equivalent to the idempotent-completion $(\catL_{G}(H))^\natural$. In particular, $\Spc(\catL_{G}(H)) \cong U_G(H)$.
    \end{proposition}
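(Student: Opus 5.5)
This is an instance of Balmer's general result on central localization \cite{Bal10}, and I will reduce to it. Set $f \coloneqq \iota(H)\colon \bbone \to x\reg[-n]$ with $n = h_{x\reg}(H)$, and let $J \coloneqq \langle \cone(f)\rangle$ be the thick $\otimes$-ideal it generates. Because $\cone(f)$ is a single compact object, $\supp(\cone(f))$ is a closed subset with quasi-compact complement, and $J = \catK_{U_G(H)^c}$ is the thick $\otimes$-ideal of objects supported on $U_G(H)^c$. The goal is to show that the central localization $\catK(G)\to \catL_G(H)$ coincides with the Verdier quotient $\catK(G)\to\catK(G)/J$. Taking idempotent completions then gives the first claim, and the identification of spectra follows from $\Spc(\catT^\natural)\cong\Spc(\catT)$ together with $\Spc(\catK/J)\cong \{\catP \mid J\subseteq\catP\} = U_G(H)$.

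\emph{Step 1 (centrality).} First I check that $S_H$ is a central multiplicative system in the twisted sense. By \Cref{def-red-tcring}, the morphisms of $\tH(G)$ are graded by the invertible object $x\reg$ and by shifts, and they act on every $\Hom_{\catK(G)}(a,b)$ by tensoring. Commutativity in the twist is on the nose. For the shift, graded-commutativity introduces signs, but these are harmless: when $p$ is odd the forerunners lie in even $\bbZ$-degree, and when $p=2$ signs are irrelevant. So the localization $\catK(G)[S_H^{-1}]$ is defined following \cite{Bal10,DS14}, with morphisms
\[
\Hom_{\catL}(a,b) = \operatorname{colim}_i \Hom_{\catK}(a, b\otimes x\reg^{\otimes i}[-in]),
\]
the transition maps being multiplication by $f$. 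The result is a tt-category, and the localization functor is a tt-functor. One subtlety needs care here: the symmetry isomorphism of $x\reg^{\otimes 2}$ must act trivially, up to sign, so that the multiplication is genuinely central. This is the cyclic-invariance hypothesis in \cite{DS14} and \cite[Appendix A]{Gal25}. I would verify it using the fact that $x\reg$ is orientable, so that each $\check\Psi^K(x\reg)$ is a shift of $k$, and I expect this to be the main obstacle.

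\emph{Step 2 (identifying kernels).} Next I compare kernels. An object $a$ dies in $\catL$ exactly when $f^{i}\otimes a = 0$ for some $i$. Using the distinguished triangle $\bbone\to x\reg[-n]\to\cone(f)$, the standard argument of \cite[Theorem 3.6]{Bal10} shows two things. First, $f\otimes\cone(f)$ is nilpotent, in fact $f^{\otimes 2}\otimes\cone(f)=0$, so $\cone(f)$ and hence all of $J$ is killed. Conversely, if $f^i\otimes a=0$, then $a$ is a summand of $a\otimes\cone(f^i)$, and $\cone(f^i)\in J$ by the octahedral axiom. Together these give $\ker = J$.

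\emph{Step 3 (the universal property).} Finally I show that the induced functor $\catK/J\to\catL$ is an equivalence. It is bijective on objects, so it remains to check that it is fully faithful. The key point is that in $\catK/J$ the morphism $f$ becomes invertible, since its cone lies in $J$. The universal property of the central localization then gives an inverse functor $\catL\to\catK/J$, and the two composites are the identity on the canonical functors, so they agree with the identity.

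Once Step 1 is settled, the rest is formal; passing to idempotent completions then yields the stated equivalence, and $\Spc(\catL_G(H))\cong U_G(H)$.
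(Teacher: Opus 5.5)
Your proposal is correct and is essentially the paper's argument. The paper simply cites \cite[Theorem 3.6]{DS14} (see also \cite[Section 3]{Bal10}) for $\catK(G)[S_H^{-1}]\simeq\catK(G)/\langle\cone(g)\mid g\in S_H\rangle$, identifies this ideal with $\catK_{U_G(H)^c}$, and passes to idempotent completions and spectra; your Steps 2--3 just unpack that cited theorem. The sign issue you single out as the main obstacle is automatic: since $\End(\bbone)=k$, the switch on $x\reg^{\otimes 2}$ is $\pm\id$, and the relevant signs are trivial because $\Res^G_1 x\reg\simeq k[2'|G|]$ and $\iota(H)$ has even shift-degree $2'[G:H]$ when $p$ is odd (signs being irrelevant when $p=2$).
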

    \begin{proof}
        Indeed, $U(H)$ has complement $Z = \supp(\cone(\iota(H)))$, and the associated tt-ideal $\catK_Z$ is precisely $\langle \cone(g)\mid g \in S_H\rangle.$ The identification now follows by \cite[Theorem 3.6]{DS14} (see also \cite[Section 3]{Bal10}) which gives us that $  \catK(G)[S_{H}\inv] \simeq  \catK(G)/\langle \cone(g)\mid g \in S_H\rangle$. Since $\catK(G)|_{U(H)}$ has spectrum $U(H)$, and idempotent completion does not change the spectrum, the result follows.
    \end{proof}
    
    We conclude this subsection by recording a few observations on the compatibility of the comparison map with certain tt-functors and with the central localizations considered above.
    
    \begin{proposition}\label{prop:comm_square_from_functor}
        Let $F\colon \catK(G) \to \catK(G')$ be a tt-functor which sends $x\reg(G)$ to $x\reg(G')^{\otimes n}$ for some $n \in \bbN$. Then the following square commutes.
        \begin{center}
            \begin{tikzcd}
                \Spc(\catK(G')) \ar[r, "\Spc(F)"] \ar[d, "\comp_{G'}"']& \Spc(\catK(G)) \ar[d, "\comp_{G}"]\\
                \Spec^h(\tH(G')) \ar[r, "\Spec^h(F)"]& \Spec^h(\tH(G))
            \end{tikzcd}
        \end{center}
    \end{proposition}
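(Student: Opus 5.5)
We will check commutativity of the square directly on points, using the defining formula for the comparison map in \Cref{prop-comparison-reducedcohomology}. Fix a prime $\catQ \in \Spc(\catK(G'))$, so that $\Spc(F)(\catQ) = F\inv(\catQ)$. The two paths around the square send $\catQ$ to the following homogeneous primes of $\tH(G)$:
\[
\comp_G(F\inv(\catQ)) = \langle f \mid \cone(f) \notin F\inv(\catQ)\rangle,
\qquad
\Spec^h(F)(\comp_{G'}(\catQ)) = F\inv\big(\langle f' \mid \cone(f')\notin \catQ\rangle\big).
\]
Here $F\colon\tH(G)\to\tH(G')$ is the homogeneous ring map of the earlier observation. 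It sends $f\colon \bbone \to x\reg(G)^{\otimes m}[-s]$ to the composite of $F(f)$ with the chosen isomorphism $F(x\reg(G)^{\otimes m}[-s]) \cong x\reg(G')^{\otimes nm}[-s]$.

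The plan has three steps.

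First, we reduce both primes to statements about homogeneous generators. A homogeneous prime is determined by which homogeneous elements it contains. So it suffices to show, for every homogeneous $f \in \tH(G)$,
\[
f \in \comp_G(F\inv\catQ) \iff F(f) \in \comp_{G'}(\catQ).
\]

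Second, we need the characterization that a homogeneous $f$ lies in $\comp(\catP)$ exactly when $\cone(f) \notin \catP$. This is the content of the formula $\comp_G\inv(Z(f)) = \supp(\cone(f))$ in \Cref{prop-comparison-reducedcohomology}. For homogeneous elements the ideal generated by $\{f : \cone(f)\notin\catP\}$ adds no new homogeneous members. This is standard from the framework of \cite[Appendix A]{Gal25}: the set of such $f$ is already a homogeneous prime ideal, by the octahedral axiom and the primality of $\catP$.

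Third, we compare the cones. Since $F$ is exact, $\cone(F(f)) \cong F(\cone(f))$; composing with an isomorphism of targets does not change the cone up to isomorphism. Hence
\[
\cone(F(f)) \notin \catQ \iff F(\cone(f)) \notin \catQ \iff \cone(f)\notin F\inv(\catQ),
\]
which is exactly the equivalence required in the first step.

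The main obstacle is the second step. One must make sure that membership of a homogeneous element in the prime generated by $\{f : \cone(f)\notin \catP\}$ is still detected by its cone. We would argue this by showing the set of such homogeneous $f$ is closed under homogeneous sums and multiples, and is prime. Closure under products uses the octahedral axiom, which places $\cone(fg)$ in the thick tensor-ideal generated by $\cone(f)$ and $\cone(g)$. The converse direction, for primality, uses that $\cone(f)\otimes\cone(g)$ lies in the thick tensor-ideal generated by $\cone(fg)$, together with the primality of $\catP$. We would cite \cite[Appendix A]{Gal25} for these facts rather than reprove them.

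A minor additional point is the case $n=0$, where all twisted degrees collapse to zero. The same argument applies verbatim, since the ring map is still homogeneous for the coarser grading.
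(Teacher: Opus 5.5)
Your proposal is correct and is the paper's argument, whose entire proof is ``routine verification using $\cone(F(f)) \cong F(\cone(f))$''; you have unpacked it via the criterion that a homogeneous $f$ lies in $\comp(\catP)$ if and only if $\cone(f)\notin\catP$, which is the formula in \Cref{prop-comparison-reducedcohomology}. Your aside on $n=0$ is too quick, however. When the twist grading collapses, $F\inv$ of a homogeneous prime need not be homogeneous, so $\Spec^h(F)$ is only defined as stated for $n\geq 1$ (as in the paper's preceding observation); your check on homogeneous elements does go through once $\Spec^h(F)$ is redefined by passing to the ideal generated by the homogeneous elements of the preimage.
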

    \begin{proof}
        Routine verification using $\cone(F(f)) \cong F(\cone(f))$.
    \end{proof}
    
    \begin{remark}
        In particular, \Cref{prop:comm_square_from_functor} applies to restriction and modular fixed points. \Cref{prop:comm_square_from_functor} also holds for graded subalgebras of $\tH(G)$ and $\tH(G')$ compatible with $F$.
    \end{remark}
    
    \begin{proposition}\label{prop:comm_square_from_localization}
        For every $H \in \Sub_p(G)$, the following square commutes.
        \begin{center}
            \begin{tikzcd}
                U_G(H) = \Spc(\catL_{G}(H)) \ar[r, hookrightarrow] \ar[d, "\comp_{\catL(H)}"'] & \Spc(\catK(G)) \ar[d, "\comp_G"]\\
                \Spec^h(\calO^\sbull_{G}(H))  \ar[r, hookrightarrow] & \Spec^h(\tH(G))
            \end{tikzcd}
        \end{center}
    \end{proposition}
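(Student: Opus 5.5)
We must show the square commutes: for a prime $\catP \in U_G(H)$, viewed as a prime $\catP'$ of $\catL_G(H)$, the two routes agree in $\Spec^h(\tH(G))$. First we fix the maps precisely. The top horizontal map is the identification of \Cref{prop:central_locs_are_open_locs}: a prime $\catP\in U_G(H)$ (so $\cone(\iota(H))\notin\catP$) corresponds to the prime $\catP' = q(\catP)$ (up to idempotent completion) of $\catL_G(H)$, where $q\colon \catK(G)\to\catL_G(H)$ is the localization. Conversely, $\catP = q^{-1}(\catP')$. The bottom horizontal map is induced by the ring map
\[
\tH(G)\to \tH(G)[S_H^{-1}]\twoheadleftarrow \tH(G)[S_H^{-1}]_{\on{0-twist}} = \calO^\sbull_G(H).
\]
Concretely, it sends a homogeneous prime $\frakq\subseteq\calO^\sbull_G(H)$ to the set of homogeneous $f\in\tH(G)$ of twist $n$ with $f/\iota(H)^{n}\in\frakq$. (When $h_{x\reg}(H)$ is odd and $p$ is odd, one uses $\iota_2(H)$ and even twists, or simply notes that an element is in a prime iff its square is.) This is the standard homeomorphism of $\Spec^h(\calO^\sbull_G(H))$ onto the open $D(\iota(H))\subseteq\Spec^h(\tH(G))$: a homogeneous prime not containing $\iota(H)$ is determined by its degree-zero-twist part after inverting $\iota(H)$, exactly as in the $\mathbb{N}$-graded Proj setting. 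We would check this by the usual argument, observing that every homogeneous element of the localization is a unit times a twist-zero element.

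The main step is then elementwise. Let $f\colon\bbone\to x\reg^{\otimes n}[-s]$ be homogeneous. The upper route gives $f\in\comp_G(\catP)$ iff $\cone(f)\notin\catP$. For the lower route, $\iota(H)$ becomes invertible in $\catL_G(H)$, so $q(x\reg)\cong\bbone[h]$ with $h=h_{x\reg}(H)$, via the image of $\iota(H)$. Hence $q(f)$ corresponds to the endomorphism $f/\iota(H)^n\in\calO^\sbull_G(H)$ of degree $s-nh$. Its cone satisfies
\[
\cone(f/\iota(H)^n)\cong \cone(q(f))\otimes(\text{invertible}) = q(\cone(f))\otimes(\text{invertible}),
\]
so $f/\iota(H)^n\in\comp_{\catL(H)}(\catP')$ iff $q(\cone(f))\notin\catP'$. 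Since $\catP=q^{-1}(\catP')$, this holds iff $\cone(f)\notin\catP$. The two primes therefore have the same homogeneous elements, and the square commutes.

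We expect the main obstacle to be bookkeeping rather than content. Three points need care: the identification of primes under idempotent completion (harmless, since cones of maps live in $\catL_G(H)$ itself and thick-ideal primes correspond bijectively); the sign and shift conventions for the degree of $f/\iota(H)^n$; and graded-commutativity. On the last point, the forerunners sit in even shift degree when $p$ is odd, so $S_H$ is central and the localization $\catK(G)[S_H^{-1}]$ of \cite{Bal10} is well behaved. The proof is otherwise the same as the cited \cite[Remark 14.19]{BG25}, and we would follow its structure.
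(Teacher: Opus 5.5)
Your proposal is correct and is essentially the paper's argument: the paper only says the square commutes by construction, and you spell this out elementwise using $\cone(f/\iota(H)^n)\cong q(\cone(f))$ (up to an invertible twist) together with $\catP = q^{-1}(\catP')$. One slip to fix: $\catP\in U_G(H)=\supp(\cone(\iota(H)))^c$ means $\cone(\iota(H))\in\catP$, not $\cone(\iota(H))\notin\catP$ — this is what lets $\catP$ descend to a prime of $\catL_G(H)$ and what puts $\comp_G(\catP)$ in $D(\iota(H))$ — and the twist-zero part includes into $\tH(G)[S_H^{-1}]$ rather than being a quotient of it.
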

    \begin{proof}
        Follows easily from constructions.
    \end{proof}

    \subsection{The cohomological open} We turn to the open $U_G(1)$ and corresponding localizations.

    \begin{theorem}\label{thm:coh_open}
        The open $U_G(1) \subseteq \Spc(\catK(G))$ is equivalently the cohomological open $\supp(\catK_{\on{ac}}(G))^c$. Consequently, 
        \[\catL_{G}(1)\cong \catK(G)|_{U_G(1)} \cong \on{D}_b(kG)\] 
        and $\calO_{G}^\sbull(1) \cong \on{H}^\sbull(G;k)$ as graded $k$-algebras.
    \end{theorem}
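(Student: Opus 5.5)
We first show that $U_G(1)=\supp(\catK_{\on{ac}}(G))^c$ as subsets of $\Spc(\catK(G))$, using the parametrization of primes $\catP(H,\frakp)$ from \Cref{not:primes}. Note that $h_{x\reg}(1)=2'|G|$ and $h_{x\reg}(H)=2'[G:H]<2'|G|$ for every nontrivial $p$-subgroup $H$. Thus the forerunner $\iota(1)\colon \bbone\to x\reg[-2'|G|]$ lives in the top degree, and $1$ is the unique subgroup with this $h$-mark.

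\emph{Inclusion $U_G(1)\subseteq V_G$.} Let $\catP(H,\frakp)$ be a prime with $H\neq 1$. Apply $\check\Psi^H$ to $\iota(1)$. Since $h_{x\reg}(H)\neq h_{x\reg}(1)$, the complex $\check\Psi^H(x\reg)\cong k[h_{x\reg}(H)]$ (by orientability) is concentrated in a degree different from the target degree of $\iota(1)$. Hence $\check\Psi^H(\iota(1))$ is a map $k\to k[h_{x\reg}(H)-2'|G|]$ in $\on{D}_b(k(\Weyl{G}{H}))$. I would argue that this map is zero: after applying $\Psi^H$ it is a chain map from $k$ in degree $0$ into a complex with no $H$-fixed cells in the relevant degree, since $\Psi^H$ of the top-degree term $kG^{\oplus i}$ vanishes. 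Then $\cone(\iota(1))\notin \catP(H,\frakp)$, because $\check\Psi^H(\cone(\iota(1)))\cong k\oplus k[\ast]$ is nonzero at every prime of $V_{\Weyl{G}{H}}$, including the maximal one. So $\catP(H,\frakp)\notin U_G(1)$.

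\emph{Inclusion $V_G\subseteq U_G(1)$.} For $\catP(1,\frakp)$, \Cref{prop:forerunner_props}(a) says $\Psi^1(\iota(1))=\iota(1)$ is a quasi-isomorphism. So its cone is acyclic, i.e. $\cone(\iota(1))\in\catK_{\on{ac}}(G)\subseteq\catP(1,\frakp)$, since $\catP(1,\frakp)$ is the preimage of a prime of $\on{D}_b(kG)$. This gives the claimed equality of opens.

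For the categorical consequence, \Cref{prop:central_locs_are_open_locs} identifies $(\catL_G(1))^\natural$ with $\catK(G)|_{U_G(1)}$. The latter is $(\catK(G)/\catK_{\on{ac}}(G))^\natural\cong\on{D}_b(kG)$, which is already idempotent complete. To drop the idempotent completion, I would use the Dell'Ambrogio--Stevenson description: the central localization equals the Verdier quotient by $\langle\cone(\iota_i(1))\rangle$. By the computation above this tt-ideal has the same support as $\catK_{\on{ac}}(G)$, so the two thick tensor ideals agree (radical ideals in a rigid category are determined by support). Hence $\catL_G(1)\simeq\catK(G)/\catK_{\on{ac}}(G)\simeq\on{D}_b(kG)$ on the nose.

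Finally, $\calO^\sbull_G(1)$ is the graded endomorphism ring of $\bbone$ in $\catL_G(1)$. In $\on{D}_b(kG)$ the image of $x\reg$ is $k[2'|G|]$, since $\iota(1)$ becomes invertible. So the twist-zero part of the localized ring identifies with $\bigoplus_s\Hom_{\on{D}_b(kG)}(k,k[s])=\on{H}^\sbull(G;k)$, and the identification is compatible with multiplication.

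The main obstacle is the vanishing $\check\Psi^H(\iota(1))=0$ for $H\neq1$, together with checking that the ideal generated by $\cone(\iota(1))$ is all of $\catK_{\on{ac}}(G)$ rather than merely having the same radical. The first I would get from the explicit trace-sum construction, since $\iota(1)$ lands in free summands killed by $\Psi^H$. The second follows because every thick tensor ideal in a rigid tt-category is radical.
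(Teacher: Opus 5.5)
Your proposal is correct, but it reaches the key equality by a different route than the paper.

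\textbf{The paper's route.} The paper never examines individual primes. It sets $y\coloneqq\cone(\iota(1))^*[n+1]$ with $n=2'|G|$, and observes that $y_0=k$ and $y_1$ is projective, because the top term of $x\reg$ is free. It then applies the generation criterion \cite[Corollary 3.20]{BG25} to conclude directly that $\catK_{\on{ac}}(G)$ is the thick tensor ideal generated by $y$, hence by $\cone(\iota(1))$. The equality $U_G(1)=V_G$ follows as a consequence. So does $\catL_G(1)\simeq\catK(G)/\catK_{\on{ac}}(G)\simeq\on{D}_b(kG)$, via the Dell'Ambrogio--Stevenson description, with no further input.

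\textbf{Your route.} You go the other way round. First you verify $U_G(1)=V_G$ pointwise, using the decomposition of $\Spc(\catK(G))$ into the pieces $\check\psi^H(V_{\Weyl{G}{H}})$. The key input is the chain-level vanishing $\Psi^H(\iota(1))=0$ for $H\neq 1$: the free top term of $x\reg$ is killed by the Brauer quotient. This makes $\check\Psi^H(\cone(\iota(1)))$ have $k[1]$ as a summand, so it lies in no prime. You then promote equality of supports to equality of ideals via Balmer's classification, using that every thick tensor ideal in a rigid category is radical. Both steps are sound, and dropping the idempotent completion is then justified exactly as you say.

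\textbf{Comparison.} The paper's argument is more economical. It uses only the shape of $\iota(1)$ in its top degree plus a single generation lemma. It needs neither the global description of the points of $\Spc(\catK(G))$ nor the classification of radical ideals. Yours is more geometric and reuses the pointwise technique the paper itself applies later: for the cover property of $\{U_G(H)\}$, and in \Cref{thm:injectivity}, where it again notes $\Psi^{H_2}(\iota(H_1))=0$ when $|H_1|<|H_2|$. The cost is that your proof rests on the considerably deeper theorem describing all primes of $\catK(G)$.

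Your preliminary remark identifying $\check\Psi^H(\iota(1))$ as a map $k\to k[h_{x\reg}(H)-2'|G|]$ is unnecessary. The chain-level observation alone does the work.
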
 
    \begin{proof}
        We invoke \cite[Corollary 3.20]{BG25}. First, since the dimension of the regular real representation of $G$ is precisely the order of $G$, we have $h_{x\reg}(1)=2'\cdot |G|\eqqcolon n$. Moreover, note that $x\reg$ in degree $n$ is a projective $kG$-module. With these observations in hand, we deduce that 
         \[y \coloneqq  \cone(\iota_1(1))^*[n+1]\] 
         is a bounded chain complex satisfying $y_0 = k$ and $y_1$ is a projective $kG$-module. Now, every $x \in \catK_{\on{ac}}(G)$ satisfies $x \otimes y_1 = 0$, therefore \cite[Corollary 3.20]{BG25} implies that $\catK_{\on{ac}}(G)$ is generated by $y$. Consequently, $\supp(\catK_{\on{ac}}(G)) = \supp(y)$, and hence $U_G(1) \coloneqq  \supp(y)^c = \supp(\catK_{\on{ac}}(G))^c$, as desired. The last statement follows by general tt-geometry and \Cref{prop:central_locs_are_open_locs}, as $\on{D}_b(kG) \cong \catK(G)/\catK_{\on{ac}}(G)$ by \cite{BG23a}.
    \end{proof}

    \begin{remark}\label{rem-lifting-generators-cohring}
       Recall that the algebra $\on{H}^\sbull(G;k)$ is finitely generated by a result of Evens--Venkov \cite{Eve61, Ven59}.
        Consequently, by clearing denominators,  there exists a finitely generated graded subalgebra $\on{R}^{\sbull\sbull}(G) \subseteq\tH(G)$ with $\iota(1)\in \on{R}^{\sbull\sbull}(G)$ such that \[(\on{R}^{\sbull\sbull}(G)[S_1\inv])_{\on{0-twist}} = \calO^\sbull_G(1) \cong \on{H}^\sbull(G).\] 
    \end{remark}
    
       
    \section{Immersion for reduced twisted cohomology}

  The main goal of this section is to prove that, for a finite group $G$, the comparison map $\comp_G$ from \Cref{prop-comparison-reducedcohomology} is an open immersion, which restricts to homeomorphisms along our open cover. We begin with some preliminary results.
    
    \begin{proposition}\label{prop:map_from_local_coh}
        Let $H \in \Sub_p(G)$. We have a commutative square:
        \begin{center}
            \begin{tikzcd}
                V_{\Weyl{G}{H}}=\Spc(\on{D}_b(k(\Weyl{G}{H}))) \ar[r, hookrightarrow, "\check\psi^H"] \ar[d, "\comp_{\on{D}_b}"] & \Spc(\catK(G)) \ar[d, "\comp_G"]\\
                \Spec^h(\on{H}^\sbull(\Weyl{G}{H})) \ar[r] & \Spec^h(\tH(G))
            \end{tikzcd}
        \end{center}
        where the bottom map is induced by the tt-functor $\Psi^H_G$. 
        If $H$ is normal in $G$, then the diagonal is injective. Moreover, the diagram also commutes for any graded subalgebra $\on{R}^{\sbull\sbull}(G) \subseteq \tH(G)$ containing $\iota(H)$. 
    \end{proposition}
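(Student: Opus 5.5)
The plan is to make the bottom map explicit, check commutativity prime by prime using the characterization of $\comp$ through cones, and then obtain injectivity for normal $H$ by factoring the diagonal through $\Weyl{G}{H}=G/H$.

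\textbf{Step 1: define the bottom map.} Write $\check\Psi^H\colon\catK(G)\to\on{D}_b(k(\Weyl{G}{H}))$ for the composite tt-functor, and set $h\coloneqq h_{x\reg}(H)$.
\begin{itemize}
\item Since $x\reg$ is orientable, $\check\Psi^H(x\reg)\cong k[h]$.
\item By \Cref{prop:forerunner_props}(a), $\check\Psi^H(\iota(H))$ is an isomorphism $k\xrightarrow{\sim}\check\Psi^H(x\reg)[-h]$. I use this specific isomorphism as the trivialization, rather than an arbitrary choice.
\item For a homogeneous $f\colon\bbone\to x\reg^{\otimes n}[-s]$, set
\[\phi(f)\coloneqq \check\Psi^H(\iota(H))^{-n}\circ\check\Psi^H(f)\colon k\to k[nh-s].\]
Up to the paper's sign/indexing conventions this is an element of $\on{H}^{s-nh}(\Weyl{G}{H};k)$, and $\phi$ is a homogeneous ring homomorphism. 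For $p=2$ no signs arise. For $p$ odd, $h$ is even because of the factor $2'$, so $\iota(H)$ sits in even shift degree and is central; this gives multiplicativity.
\item Equivalently, $\phi$ factors through $\tH(G)[S_H^{-1}]$, followed by $\calO^\sbull_G(H)$ and then the map induced by $\check\Psi^H$ on $\bbZ$-graded endomorphism rings. This formulation uses only the existence of $\iota(H)$, so it applies verbatim to any graded subalgebra $\on{R}^{\sbull\sbull}(G)\ni\iota(H)$, which gives the last claim.
\item The bottom map is $\Spec^h(\phi)$.
\end{itemize}

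\textbf{Step 2: commutativity.} Let $\frakp\in V_{\Weyl{G}{H}}$ and $\catP=\check\psi^H(\frakp)=(\check\Psi^H)^{-1}(\frakp)$. For homogeneous $f$ the chain of equivalences is:
\begin{itemize}
\item $f\in\comp_G(\catP)$ if and only if $\cone(f)\notin\catP$, if and only if $\check\Psi^H(\cone f)\cong\cone(\check\Psi^H f)\notin\frakp$.
\item Precomposing $\check\Psi^H(f)$ with the isomorphism $\check\Psi^H(\iota(H))^{-n}$ changes its cone only up to isomorphism, so this holds if and only if $\cone(\phi(f))\notin\frakp$.
\item The last condition says $\phi(f)\in\comp_{\on{D}_b}(\frakp)$.
\end{itemize}
Hence $\comp_G(\catP)=\phi^{-1}(\comp_{\on{D}_b}(\frakp))$, which is exactly commutativity of the square.

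\textbf{Step 3: injectivity for $H\trianglelefteq G$.} The diagonal equals $\comp_G\circ\check\psi^H$. Since $\check\Psi^H_G$ is the composite $\catK(G)\xrightarrow{\Psi^H}\catK(G/H)\twoheadrightarrow\on{D}_b(k(G/H))$, we have $\check\psi^H=\psi^H\circ j$, where $j\colon V_{G/H}\hookrightarrow\Spc(\catK(G/H))$ is the cohomological open. By \Cref{prop:comm_square_from_functor} and \Cref{prop:mod_fix_pts_surj}, the diagonal therefore equals $\Spec^h(\Psi^H)\circ\comp_{G/H}\circ j$. Injectivity then follows from two facts:
\begin{itemize}
\item $\Spec^h(\Psi^H)$ is injective, because $\Psi^H$ is surjective on reduced twisted cohomology (\Cref{prop:mod_fix_pts_surj}).
\item $\comp_{G/H}$ is injective on $j(V_{G/H})=U_{G/H}(1)$. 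By \Cref{thm:coh_open}, $U_{G/H}(1)$ is the cohomological open with $\calO^\sbull_{G/H}(1)\cong\on{H}^\sbull(G/H;k)$. By \Cref{prop:comm_square_from_localization}, $\comp_{G/H}$ restricted there is Balmer's comparison map for $\on{D}_b(k(G/H))$, a homeomorphism onto $\Spec^h\on{H}^\sbull(G/H;k)$, followed by the open embedding $\Spec^h(\calO^\sbull_{G/H}(1))\hookrightarrow\Spec^h(\tH(G/H))$.
\end{itemize}
The composite of these injective maps is the diagonal, so the diagonal is injective.

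\textbf{Main obstacle.} I expect the delicate point to be Step 1: making $\phi$ a genuinely homogeneous ring map, with consistent sign and indexing conventions, whose kernel-type behaviour matches cones. Trivializing via $\check\Psi^H(\iota(H))$ itself, rather than an arbitrary isomorphism $\check\Psi^H(x\reg)\cong k[h]$, is exactly what makes the identification of cones in Step 2 immediate.
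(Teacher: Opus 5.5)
Your proof is correct. Step 3 is the paper's injectivity argument: the functoriality square for $\Psi^H$, surjectivity of $\Psi^H$ on $\tH$, and the cohomological open. Steps 1--2, however, reach commutativity by a different route.

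\emph{The paper's route.} It never writes down a ring map $\tH(G)\to\on{H}^\sbull(\Weyl{G}{H})$ by hand. Instead it obtains the square as a composite of three squares it already has: \Cref{prop:comm_square_from_functor} for $\Res^G_{N_G(H)}$, the same for $\Psi^H\colon\catK(N_G(H))\to\catK(\Weyl{G}{H})$, and \Cref{prop:comm_square_from_localization} for $\Weyl{G}{H}$ at $H/H$, identified with the cohomological open via \Cref{thm:coh_open}. Its bottom map therefore trivializes $\check\Psi^H(x\reg(G))$ through $\Psi^H_G(x\reg(G))\cong x\reg(\Weyl{G}{H})^{\otimes[G:N_G(H)]}$ and the forerunner of $x\reg(\Weyl{G}{H})$ at the trivial subgroup.

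\emph{Your route.} You trivialize directly with $\check\Psi^H(\iota(H))$ and check commutativity in one line from $\comp^{-1}(Z(f))=\supp(\cone(f))$. The two trivializations differ by $c^n$ in twist $n$ for some $c\in k^\times$, so they induce the same map on $\Spec^h$. In any case, since $\comp_{\on{D}_b}$ is bijective, the bottom map is forced by commutativity.

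\emph{What each buys.} Your argument is more self-contained. It uses only that $\check\Psi^H$ inverts $\iota(H)$, and needs neither the factorization through the normalizer nor the computation of $\Psi^H_G(x\reg(G))$. The subalgebra claim then follows by restricting $\phi$, as in the paper's last paragraph. The paper's decomposition reuses general machinery and makes explicit the passage through $N_G(H)$ that \Cref{prop:lifting_lemma} and \Cref{thm-injectivity-diagonal} later exploit.

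\emph{One precision point.} Your $\phi$ collapses the bigrading to a $\mathbb{Z}$-grading, so $\phi^{-1}(\frakq)$ is not bihomogeneous. For instance, it contains $\iota(H)-1$, because $\phi(\iota(H))=1$. So the equality $\comp_G(\catP)=\phi^{-1}(\comp_{\on{D}_b}(\frakp))$ must be read with $\Spec^h(\phi)(\frakq)$ defined as the ideal generated by the bihomogeneous elements of $\phi^{-1}(\frakq)$. That is exactly what your element-wise check on homogeneous $f$ in Step 2 identifies with $\comp_G(\catP)$, so nothing further is needed beyond stating the convention.
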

    \begin{proof}
        Set $N=N_G(H)$, and $n = [G:N]$. Indeed, the above square is the composition of three commutative squares:
        
     \begin{adjustbox}{scale=0.9,center}
                    \begin{tikzcd}
                \Spc(\catL_{\Weyl{G}{H}}(H/H)) \ar[r, hookrightarrow] \ar[d, "\comp_{\catL(H/H)}"]& \Spc(\catK(\Weyl{G}{H})) \ar[r, hookrightarrow, "\psi^H"] \ar[d, "\comp_{\Weyl{G}{H}}"]& \Spc(\catK(N)) \ar[d, "\comp_{N}"]\ar[r, "\rho_{N}^G"] & \Spc(\catK(G)) \ar[d, "\comp_G"] \\
                \Spec^h(\calO^\sbull_{\Weyl{G}{H}}(H/H)) \ar[r, hookrightarrow] & \ar[r, hookrightarrow, "\psi^H"] \Spec^h(\tH(\Weyl{G}{H})) & \Spec^h(\tH(N) \ar[r, "\rho_{N}^G"] & \Spec^h(\tH(G))
            \end{tikzcd}       
     \end{adjustbox}

        The first square comes from \Cref{prop:comm_square_from_localization}, the second comes from \Cref{prop:comm_square_from_functor} applied to $\Psi^H\colon \catK(N) \to \catK(N/H)$, and the third comes from \Cref{prop:comm_square_from_functor} applied to $\Res^G_{N}\colon \catK(G) \to \catK(N)$.
        By \Cref{thm:coh_open}, the left-hand square identifies with 
        \begin{center}
            \begin{tikzcd}
                V_{\Weyl{G}{H}}=\Spc(\on{D}_b(k(\Weyl{G}{H}))) \ar[r, hookrightarrow] \ar[d, "\comp_{\on{D}_b}"] & \Spc(\catK(\Weyl{G}{H})) \ar[d, "\comp_{\Weyl{G}{H}}"]\\
                \Spec^h(\on{H}^\sbull(\Weyl{G}{H})) \ar[r] & \Spec^h(\tH(\Weyl{G}{H})),
            \end{tikzcd}
        \end{center}
        and we obtain the desired commutative diagram. Injectivity when $H \trianglelefteq G$ follows since in this case, the right-hand square is trivial, and the remaining the bottom-left path is injective, as the bottom $\psi^H$ is injective from \Cref{prop:mod_fix_pts_surj}. 

        For the final statement, consider the ring map $\on{R}^{\sbull\sbull}(G) \subseteq \tH(G) \to \tH(\Weyl{G}{H}) \to \calO^\sbull_{\Weyl{G}{H}}(H/H)$ which gives a commutative triangle
         \begin{center}
            \begin{tikzcd}
                \Spec^h(\calO^\sbull_{\Weyl{G}{H}}(H/H)) \ar[r] \ar[rd] & \Spec^h(\tH(\Weyl{G}{H})) \ar[d] \\ 
                & \Spec^h(\on{R}^{\sbull\sbull}(G)) 
            \end{tikzcd}
        \end{center}
        where the horizontal arrow is induced by the inclusion. The result now follows from the previous observation and the fact that the map $\Spc(\catK(G)) \to \Spec^h(\on{R}^{\sbull\sbull}(G))$ factors through $\comp_G$.
    \end{proof}
    
   Therefore, to establish the injectivity of the comparison map, it suffices to show that the diagonal map is injective when $H$ is non-normal, and that the images of the diagonal maps are disjoint for non-conjugate $p$-subgroups. We address the former first.

    For this next proposition, we label modular fixed point functors with subscripts to denote the ambient group for clarity. The following is an analogue of \cite[Proposition 6.1]{Mil25c}.
    
    \begin{proposition}\label{prop:lifting_lemma}
        Let $G$ be a finite group and $H \in \Sub_p(G)$. Set $n = [G: N_G(H)]$ For every homogeneous $f \in \tH(N_G(H))$, there exists a homogeneous $\hat{f} \in \tH(G)$ such that \[\check\Psi_{N_G(H)}^H(f) = \check\Psi_G^H(\hat{f})\in \on{H}^\sbull(\Weyl{G}{H}).\]
    \end{proposition}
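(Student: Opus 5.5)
Let $N \coloneqq N_G(H)$ and let $f \in \tH(N)$ be homogeneous, $f\colon \bbone \to x\reg(N)^{\otimes m}[-s]$. The natural candidate for $\hat f$ is a multiplicative (tensor) induction, i.e.\ a norm, of $f$ from $N$ to $G$. Additive induction alone will not work, because $\Psi^H$ of an induced morphism involves all double cosets. The plan is to take the Evens-type norm $\hat f \coloneqq \on{N}_N^G(f)$, realized as $\bigotimes_{g\in[G/N]} {}^g f$ on the underlying complex with its $G$-action permuting the tensor factors. Tensor induction of permutation complexes is again a complex of permutation modules. We have $\on{N}_N^G(x\reg(N)) \cong \catB(\on{Ind}_N^G \bbR N^{\oplus 2'}) = x\reg(G)$, since $\catB$ converts induction of representations into norms of spheres ($S^{\on{Ind} V} = \on{N}_N^G S^V$). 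Hence $\hat f\colon \bbone \to x\reg(G)^{\otimes m}[-ns]$, up to a sign or orientation character, which we kill by replacing $f$ with $f^{\otimes 2}$ if needed and correspondingly squaring the target class in the argument below. This places $\hat f$ in $\tH(G)$ as a homogeneous element.

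Next we compute $\check\Psi^H_G(\hat f)$. The modular fixed points of a norm satisfy a Mackey-type formula: $\Psi^H_G(\on{N}_N^G y) \cong \bigotimes_{g \in H\backslash G/N} \on{N}(\Psi^{H\cap {}^gN}(\cdots))$. The double coset $g=1$ contributes $\Psi^H_N(y)$ as a $\Weyl{G}{H}=\Weyl{N}{H}$-complex. For the other double cosets, $H \not\le {}^gN$ (these are exactly the $g\notin N$, since $N=N_G(H)$), and the factor is a norm from a proper subgroup of $H$ to $H$ followed by $H$-fixed points. This is precisely the situation of a Tate-type/Frobenius contribution. So the key identity to establish is
\[
\check\Psi^H_G(\hat f) = \check\Psi^H_N(f)\cdot \prod_{g \ne 1} c_g,
\]
where each $c_g$ lives in $\on{H}^\sbull(\Weyl{G}{H})$ and is the image under $\check\Psi^H$ of a norm of a forerunner-type class.

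To make this clean, we arrange that the extra factors are units, or at least exactly computable, rather than uncontrolled. We intend to instead take $\hat f \coloneqq \on{N}_N^G(f)$ multiplied by suitable powers of $\iota(H)\in\tH(G)$, or replace $f$ by $f\cdot\iota_N(H)^j$. Here we use \Cref{prop:forerunner_props}(a): $\check\Psi^H(\iota(H))$ is an isomorphism, so it is a nonzero scalar in $\on{H}^0$ after the identification $x\reg\cong k[i]$ at $H$. Concretely, we expect the off-diagonal factors to be $\check\Psi^H$ of the norm of the unit $\bbone \to \bbone$ twisted. Since norms of $\Psi^{K}$-invertible data remain invertible, each $c_g$ is a nonzero scalar, which we then absorb by rescaling $\hat f$. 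The result is then equality up to scalar, and exact equality after rescaling.

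The main obstacle is the Mackey/Tambara formula for $\Psi^H\circ\on{N}_N^G$ at the level of morphisms in $\catK$, not just objects, together with verifying that the non-identity double coset terms contribute only invertible scalars. A further difficulty is that the norm is not additive, which is harmless here since $f$ is homogeneous. I would first try to verify this on the chain level using the explicit description $\Psi^H(kX)=k(X^H)$. On a tensor-induced permutation complex, the $H$-fixed basis elements of $\bigotimes_{g}{}^gX$ are exactly the tuples constant along $H$-orbits of $G/N$. Since $H$ fixes only the coset $N$ in $G/N$ (as $N=N_G(H)$), this reduces $\Psi^H$ of the norm to $\Psi^H(f)$ tensored with diagonal/Frobenius-type terms on the free $H$-orbits. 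On those terms, a degree-zero-in-$h$-marks argument shows they are identity-like up to scalar after passing to $\on{D}_b$.
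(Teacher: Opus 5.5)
Your key identity fails, and the reason is not technical. You assert that $H\not\le {}^gN$ for every $g\notin N$, but this is false. $H$ fixes every coset $gN$ with $g^{-1}Hg\le N_G(H)$, and each such coset contributes to $\Psi^H_G(\on{N}_N^G f)$ a tensor factor ${}^g\Psi^{g^{-1}Hg}_N(f)$. That factor depends on $f$ and need not be invertible.

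Take $p=2$, $G=D_8=\langle r,s\rangle$ and $H=\langle s\rangle$. Then $N=N_G(H)=\langle s,r^2\rangle$ is normal of index $2$, so $H$ fixes both points of $G/N$. Since $\Res^G_N\on{N}_N^G(f)=f\otimes{}^rf$, you get $\Psi^H_G(\on{N}_N^Gf)=\Psi^H_N(f)\otimes{}^r\Psi^{H'}_N(f)$ with $H'=\langle sr^2\rangle$, which is not $N$-conjugate to $H$. Now take $f=\iota_{N,1}(H)\cdot u$. By \Cref{prop:forerunner_props}(c), $\Psi^{H'}_N(\iota_{N,1}(H))=0$, so $\check\Psi^H_G(\on{N}_N^Gf)=0$. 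Meanwhile $\check\Psi^H_N(f)=\check\Psi^H_N(u)$ can be arbitrary, e.g.\ $1$ for $u=1$. So your repair of multiplying $f$ by powers of $\iota_{N,1}(H)$ actually forces the bad factor to vanish, and no rescaling helps. A norm multiplies together the contributions of all $H$-orbits on $G/N$, so it cannot isolate the identity coset.

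Two further points would also need fixing. First, chain-level tensor induction does not descend to $\catK$. For instance, $\cone(\id_k)^{\otimes p}$ with $C_p$ permuting the factors is acyclic but not contractible over $kC_p$. So neither $\on{N}_N^G(x\reg(N))\cong x\reg(G)$ nor a Mackey formula for morphisms is available until you construct a genuine homotopical norm. Second, for $p$ odd and odd shift $s$, graded-commutativity forces $f\otimes f=0$, since the symmetry of $x\reg\otimes x\reg$ is trivial. So squaring away the sign character lifts nothing; in any case, a lift of $\check\Psi^H_N(f)^2$ is not a lift of $\check\Psi^H_N(f)$.

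The missing idea is that additive transfer does work if you first multiply by a cutoff class. The paper replaces $f$ by $f\otimes\iota_{N,1}(H)^{\otimes j}$ so that its twist is divisible by $n$. This makes the target a restriction $\Res^G_N(x\reg(G)^{\otimes l})$ without changing $\check\Psi^H_N(f)$. It then sets $\hat f=\tr^G_N(f\otimes\tilde\iota^H_{\on{reg}})$. Here $\tilde\iota^H_{\on{reg}}\colon k\to\Res^G_N(x\reg(G))[-2'[G:H]]$ is the un-traced forerunner of \Cref{cons:forerunners}. Its image lies in summands isomorphic to $k[N/H]$, and $\check\Psi^H_N(\tilde\iota^H_{\on{reg}})$ is invertible, identified with $1$.

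For $g\notin N$, the term $g(f\otimes\tilde\iota^H_{\on{reg}})$ is supported on basis elements involving cosets $gnH$. These are not $H$-fixed because $gn\notin N_G(H)$, and this holds even when $gN$ itself is $H$-fixed, as in the $D_8$ example. So $\Psi^H_G$ kills these terms, leaving $\Psi^H_G(\hat f)=\Psi^H_N(f)\otimes\Psi^H_N(\tilde\iota^H_{\on{reg}})$, which is $\check\Psi^H_N(f)$.

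This answers your objection to additive induction. The cutoff works at the level of $G/H$ rather than $G/N$, so the unwanted cosets become summands that vanish rather than factors that multiply. The construction is also automatically well defined in $\catK$.
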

    \begin{proof}
        Let $f \colon k \to x[s]$ be the homogeneous element in $\tH(N_G(H))$, for $x = x\reg(N_G(H))^{\otimes i}$ for some $i\in \bbN$. First, we replace $f$ with $f\otimes f' \colon k \to x\reg(N_G(H))^{\otimes i} \otimes x\reg(N_G(H))^j$, where $j \in \bbN$ satisfies that $n$ divides $i+j$; this ensures that the target is of the form $\Res^G_{N_G(H)}(x\reg(G)^{\otimes l})$ with $nl = i+j$. Setting $f' = \iota(H)^{\otimes j}$; then $\Psi^H_{N_G(H)}(f)$ remains unchanged, since $\iota(H) = 1$ in $\calO^\sbull_{\Weyl{G}{H}}(H/H) \cong \on{H}^\sbull(\Weyl{G}{H})$ by construction.

        From the forerunner construction \Cref{cons:forerunners}, we have a (partial) forerunner homomorphism \[\tilde\iota^H_{\on{reg}}\colon k \to \Res^G_{N_G(H)}(x\reg(G))[-2'[G:H]] \cong x\reg(N_G(H))^{\otimes n}[-2'[G:H]]\] with image contained in a direct sum of transitive permutation modules isomorphic to $k[N_G(H)/H]$ satisfying that $\check\Psi_{N_G(H)}^H(\tilde\iota^H_{x\reg})$ is an isomorphism. Note we take $\tilde\iota^H_{\on{reg}}$ to be the forerunner from \Cref{cons:forerunners} (associated to $N_G(H)$) on the nose, and not a tensor product of forerunners. 

        We define \[\hat f \coloneqq  \tr^G_{N_G(H)}(f \otimes \tilde\iota^H_{\reg}) = \sum_{g\in [G/N_G(H)]} g(f\otimes \tilde\iota^H_{\reg}).\] We claim that \[\check\Psi^H_G(\hat f) = \check\Psi^H_{N_G(H)}(f) \in \calO^\sbull_{\Weyl{G}{H}}(H/H)\cong \on{H}^\sbull(\Weyl{G}{H};k)\] Indeed, we first observe \[\Psi^H_G(\hat f) = \Psi^H_{N_G(H)}(f) \otimes \Psi^H_{N_G(H)}(\tilde\iota^H_{\on{reg}}).\] This follows since the image of $\tilde\iota^H_{\on{reg}}$ has in homological degree 0 image contained inside a direct sum of permutation modules isomorphic to $k(N_G(H)/H)$. Therefore, the image of $g\cdot \iota^H_{\on{reg}}$ has image contained inside a direct sum of permutation modules isomorphic to $k({}^gN_G(H)/{}^gH)$. Consequently, in the trace sum $\tr^G_{N_G(H)}(f \otimes \tilde\iota^H_{\on{reg}})$, if $g \not\in N_G(H)$, then the image of $g(f\otimes \tilde\iota^H_{\on{reg}})$ is killed by $\Psi^H$, and the observation follows. 
        Now, $\check\Psi^H_{N_G(H)}(\tilde\iota^H_{\on{reg}})$ is an isomorphism in $\on{D}_b(\Weyl{G}{H})$, and in fact under the identification $\calO^\sbull_{\Weyl{G}{H}}(H/H) \cong \on{H}^\sbull(\Weyl{G}{H})$ identifies with $\id_k$. We conclude \[\Psi^H_G(\hat f) = \Psi^H_{N_G(H)}(f) \otimes \Psi^H_{N_G(H)}(\tilde\iota^H_{\on{reg}}) = \Psi^H_{N_G(H)}(f)\] as desired.
    \end{proof}

  \begin{theorem}\label{thm-injectivity-diagonal}
      There exists a finitely generated graded subalgebra $\on{R}^{\sbull\sbull}(G)\subseteq\tH(G)$ containing the forerunners $\iota(H)$ for all $H \in \Sub_p(G)$ and such that the ring map  
      \[
      \on{R}^{\sbull\sbull}(G) \subseteq \tH(G) \to \tH(\Weyl{G}{H}) \to \calO^\sbull_{\Weyl{G}{H}}(H/H)\cong \on{H}^\sbull(\Weyl{G}{H})
      \]
      is surjective, for any $H\in \Sub_p(G)$. In particular, the diagonal map from the square in \Cref{prop:map_from_local_coh} is injective. 
  \end{theorem}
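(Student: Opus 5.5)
The construction will be explicit, using \Cref{prop:lifting_lemma} and the finite generation of group cohomology. There are finitely many conjugacy classes of $p$-subgroups, so I fix representatives $H_1,\dots,H_r$. For each $H=H_j$, Evens--Venkov gives finitely many homogeneous generators $c_{H,1},\dots,c_{H,m_H}$ of $\on{H}^\sbull(\Weyl{G}{H};k)$.

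The first step is to lift each generator to $\tH(N_G(H))$. \Cref{thm:coh_open} applied to the group $\Weyl{G}{H}$ identifies $\calO^\sbull_{\Weyl{G}{H}}(H/H)$ with $\on{H}^\sbull(\Weyl{G}{H})$. Clearing denominators, as in \Cref{rem-lifting-generators-cohring}, writes each $c_{H,a}$ as $\tilde c_{H,a}/\iota(H/H)^{N}$ with $\tilde c_{H,a}\in\tH(\Weyl{G}{H})$. Next I use that $\Psi^H\colon\tH(N_G(H))\to\tH(\Weyl{G}{H})$ is surjective by \Cref{prop:mod_fix_pts_surj}, since $H\trianglelefteq N_G(H)$. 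This gives $f_{H,a}\in\tH(N_G(H))$ with $\Psi^H(f_{H,a})=\tilde c_{H,a}$, so $\check\Psi^H_{N_G(H)}(f_{H,a})=c_{H,a}$ in $\calO^\sbull_{\Weyl{G}{H}}(H/H)$, because $\iota(H/H)$ maps to $1$ there. I also need $\Psi^H(\iota_{N_G(H)}(H))$ to be, up to a unit, a power of $\iota(H/H)$. I would check this from the compatibility of forerunners with modular fixed points, or simply absorb it, since only the image in the twist-zero localization matters.

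The second step applies \Cref{prop:lifting_lemma} to each $f_{H,a}$. This produces homogeneous $\hat f_{H,a}\in\tH(G)$ with $\check\Psi^H_G(\hat f_{H,a})=c_{H,a}$. I then define $\on{R}^{\sbull\sbull}(G)$ to be the $k$-subalgebra generated by all $\hat f_{H_j,a}$ and all forerunners $\iota(K)$. It is generated by finitely many homogeneous elements, because $\iota(K)$ depends only on the conjugacy class of $K$ by \Cref{prop:forerunner_props}(b). The composite ring map from $\on{R}^{\sbull\sbull}(G)$ to $\on{H}^\sbull(\Weyl{G}{H})$ is a $k$-algebra homomorphism whose image contains every $c_{H,a}$, so it is surjective.

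The statement is claimed for all $H$, not only the representatives. For $H'={}^gH$, conjugation by $g$ identifies the two targets $\on{H}^\sbull(\Weyl{G}{H'})\cong\on{H}^\sbull(\Weyl{G}{H})$ compatibly with $\check\Psi$. This holds because elements of $\tH(G)$ are $G$-equivariant, so $\check\Psi^{{}^gH}(f)={}^g\check\Psi^H(f)$, and surjectivity transfers.

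For the final claim, surjectivity of a graded ring map $A\to B$ makes $\Spec^h(B)\to\Spec^h(A)$ injective. So $\Spec^h(\on{H}^\sbull(\Weyl{G}{H}))\to\Spec^h(\on{R}^{\sbull\sbull}(G))$ is injective, and by the last part of \Cref{prop:map_from_local_coh} this map is the diagonal for $\on{R}^{\sbull\sbull}(G)$. The diagonal into $\Spec^h(\tH(G))$ composes with restriction to $\Spec^h(\on{R}^{\sbull\sbull}(G))$ to give this injective map, so it is itself injective.

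I expect the main obstacle to be the bookkeeping in the first step. One must check that the normalizations of the forerunners and of \Cref{thm:coh_open} for $\Weyl{G}{H}$ match, so that $\check\Psi^H$ really lands in the twist-zero localization and sends the lifts to the chosen generators rather than to unit multiples or shifts of them. Unit multiples are harmless, since they still generate.
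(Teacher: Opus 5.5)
Your proposal is correct and follows essentially the same route as the paper: you clear denominators for generators of $\on{H}^\sbull(\Weyl{G}{H})$ and lift them through the surjection $\Psi^H\colon\tH(N_G(H))\to\tH(\Weyl{G}{H})$. You then push them up to $\tH(G)$ with the trace construction of \Cref{prop:lifting_lemma} and adjoin all forerunners $\iota(H)$. For the final injectivity claim you also need that $\comp_{\on{D}_b}\colon V_{\Weyl{G}{H}}\to\Spec^h(\on{H}^\sbull(\Weyl{G}{H}))$ is injective. This is the classical homeomorphism $V_{\Weyl{G}{H}}\cong\Spec^h(\on{H}^\sbull(\Weyl{G}{H}))$, which the paper also uses implicitly.
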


\begin{proof}
    Fix $H\in \Sub_p(G)$. From \Cref{rem-lifting-generators-cohring}, we obtain a finitely generated graded subalgebra 
    \[
    \on{R}^{\sbull\sbull}(\Weyl{G}{H})\subseteq \tH(\Weyl{G}{H})
    \]
    such that $(\on{R}^{\sbull\sbull}(\Weyl{G}{H})[S_{H/H}\inv])_{\on{0-twist}} \cong \on{H}^\sbull(\Weyl{G}{H})$. We now let $\on{R}^{\sbull\sbull}(N_G(H))\subseteq \tH(N_G(H))$ be the graded subalgebra generated by lifts of a finite set of generators of $\on{R}^{\sbull\sbull}(\Weyl{G}{H})$. Such lifts exist because the map 
    \[
    \Psi^H_{N_G(G)}\colon  \tH(N_G(H))\to \tH(\Weyl{G}{H})
    \]
    is surjective; see \Cref{prop:mod_fix_pts_surj}).  We now invoke \Cref{prop:map_from_local_coh} to lift a finite set of generators of $\on{R}^{\sbull\sbull}(N_G(H))$ to $\tH(G)$, say $\mathcal{G}(H)$. 
    
    We let $\on{R}^{\sbull\sbull}(G)\subseteq \tH(G)$ be the finitely generated graded subalgebra generated by $\mathcal{G}(H)$ with $\iota(H)$ for all $H\in \Sub_p(G)$. By construction, this graded subalgebra satisfies the required properties.

    For the second claim, it suffices to observe that we have a commutative diagram
    \begin{center}
            \begin{tikzcd}
              V_{\Weyl{G}{H}} \ar[r,"\comp_{\on{D}_b}"] &  \Spec^h(\on{H}^\sbull(\Weyl{G}{H})) \ar[r] \ar[rd] & \Spec^h(\tH(\Weyl{G}{H})) \ar[d] \\ 
                & & \Spec^h(\on{R}^{\sbull\sbull}(G)) 
            \end{tikzcd}
        \end{center}
        where the vertical arrow is induced by the inclusion, and the top path is precisely the diagonal map from \Cref{prop:map_from_local_coh}. The result therefore follows from the injectivity of the diagonal map if the above diagram. 
\end{proof}

\begin{theorem}\label{thm:injectivity}
        Let $\on{R}^{\sbull\sbull}(G)\xhookrightarrow{\alpha} \tH(G)$ be the graded subalgebra from \Cref{thm-injectivity-diagonal}. The comparison map 
        \[\rho= \Spec^h(\alpha) \circ\on{comp}_{G}\colon  \Spc(\catK(G)) \to \Spec^h(\on{R}^{\sbull\sbull}(G) )\] is injective. Consequently, $\on{comp}_{G}$ is injective. 
    \end{theorem}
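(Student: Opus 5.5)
We prove injectivity of $\rho$ by reducing, via the stratification $\Spc(\catK(G)) = \bigsqcup_{H \in \Sub_p(G)/G} \im \check\psi^H$, to two claims. First, $\rho$ is injective on each stratum $\im\check\psi^H$. Second, $\rho$ sends distinct strata to disjoint sets.

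\textbf{Injectivity on a stratum.} By \Cref{prop:map_from_local_coh}, applied to the subalgebra $\on{R}^{\sbull\sbull}(G)$ (which contains $\iota(H)$), the map $\rho\circ\check\psi^H$ coincides with the diagonal map
\[V_{\Weyl{G}{H}} \to \Spec^h(\on{H}^\sbull(\Weyl{G}{H})) \to \Spec^h(\on{R}^{\sbull\sbull}(G)).\]
By \Cref{thm-injectivity-diagonal} this diagonal map is injective. Since $\check\psi^H$ is a bijection onto its image, $\rho$ is injective on $\im\check\psi^H$.

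\textbf{Disjointness of strata.} Let $H, K \in \Sub_p(G)$ be non-conjugate, and suppose $\rho(\catP(H,\frakp)) = \rho(\catP(K,\frakq))$. We distinguish the two strata using the forerunners $\iota(H), \iota(K) \in \on{R}^{\sbull\sbull}(G)$ and the characterization $\comp_G\inv(Z(f)) = \supp(\cone(f))$ from \Cref{prop-comparison-reducedcohomology}. This characterization says that $f \notin \rho(\catP)$ exactly when $\cone(f) \in \catP$.

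\begin{itemize}
\item On the $H$-stratum, $\check\Psi^H(\iota(H))$ is an isomorphism by \Cref{prop:forerunner_props}(a). Hence $\cone(\iota(H)) \in \ker\check\Psi^H$, and therefore $\cone(\iota(H)) \in \catP(H,\frakp)$ for every $\frakp$, since each such prime is a preimage under $\check\Psi^H$ of a prime. So $\iota(H) \notin \rho(\catP(H,\frakp))$.
\item On the $K$-stratum, the question is whether $\cone(\iota(H)) \in \catP(K,\frakq)$. This holds if and only if $\check\Psi^K(\cone(\iota(H))) \in \frakq$.
\end{itemize}

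Consider $\check\Psi^K(\iota(H))\colon k \to \check\Psi^K(x\reg)[-h(H)]$. Since $x\reg$ is orientable, $\check\Psi^K(x\reg) \cong k[h(K)]$, so this map is an element of $\on{H}^{h(K)-h(H)}(\Weyl{G}{K})$. We split into cases according to the degree.

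\begin{enumerate}
\item If $h(K) \neq h(H)$, this element has nonzero cohomological degree and is therefore nilpotent. Indeed, negative degrees vanish outright, and positive-degree elements of $\on{H}^\sbull$ of a finite group lie in every homogeneous prime on the relevant stratum. A nilpotent element is not invertible locally at $\frakq$, so its cone does not lie in $\frakq$.
\item If $h(H) = h(K)$, then $[G:H] = [G:K]$, so neither of $H, K$ is subconjugate to the other. Using the cellular description in \Cref{cons:forerunners}, the map $\Psi^K(\iota(H))$ factors through modules $k(G/H)^K$, which vanish in the relevant degree. Hence $\Psi^K(\iota(H)) = 0$ (compare \Cref{prop:forerunner_props}(c)), and the cone of the zero map is not in any proper prime.
\end{enumerate}

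In either case $\cone(\iota(H)) \notin \catP(K,\frakq)$, so $\iota(H) \in \rho(\catP(K,\frakq))$ but $\iota(H) \notin \rho(\catP(H,\frakp))$. This gives disjointness, and since $\rho$ factors through $\comp_G$, injectivity of $\comp_G$ follows.

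\textbf{Main obstacle.} The delicate step is the degree-zero case, and in particular verifying $\Psi^K(\iota(H)) = 0$ when $h(H) = h(K)$ and $H, K$ are incomparable. \Cref{prop:forerunner_props}(c) requires a hypothesis on the minimal overgroups $L$. For $x\reg$ that hypothesis holds automatically: $h(L) = 2'[G:L]$ is strictly smaller for any $L$ strictly containing $H$ up to conjugacy. I would verify this carefully. If it were to fail, the fallback is the weaker statement that $\check\Psi^K(\iota(H))$ is a non-invertible degree-zero element. Degree-zero cohomology is $k$, so any non-invertible element is zero, which still suffices.
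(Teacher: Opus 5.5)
\textbf{There is a genuine error in your disjointness step: case (a) in the subcase $h(K) > h(H)$,} i.e.\ when $K$ is strictly smaller than $H$. There $\check\Psi^K(\iota(H))$ lies in $\on{H}^{h(K)-h(H)}(\Weyl{G}{K})$ in positive degree. It is false that positive-degree cohomology classes are nilpotent or lie in every homogeneous prime; the polynomial generator of $\on{H}^\sbull(C_p;k)$ is a counterexample.

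Worse, the conclusion of that subcase is itself false. $U(H)=\open(\iota(H))$ is an open set containing $\catM(H)$, so it contains every generalization of $\catM(H)$. Primes $\catP(K,\frakq)$ with $K<_G H$ can contain $\catM(H)$ \cite[Corollary 7.13]{BG25}. Concretely, take $G=C_2$, $p=2$, $H=G$, $K=1$:
\begin{itemize}
\item $\Spc(\catK(G))$ has three points, and its generic point $\catP(1,\frakq)$ specializes to both closed points, so it lies in $U(G)$.
\item Equivalently, $\check\Psi^1(\iota(G))$ is a nonzero, hence non-nilpotent, element of $\on{H}^{h(1)-h(G)}(C_2;k)=\on{H}^1(C_2;k)$.
\end{itemize}
So $\iota(H)$ simply cannot separate the $H$-stratum from the $K$-stratum when $K$ is the smaller subgroup. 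The step you flagged as the main obstacle (equal $h$-values) is actually fine; the positive-degree subcase is the one that breaks.

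\textbf{The repair is a symmetry-breaking choice, which is exactly what the paper does.} A single element of $\on{R}^{\sbull\sbull}(G)$ suffices to separate two points. So relabel so that $|H|\le |K|$, i.e.\ $h(H)\ge h(K)$, and test with the forerunner at the \emph{smaller} subgroup. Then only two subcases remain:
\begin{itemize}
\item $h(H)>h(K)$: here $\check\Psi^K(\iota(H))$ has negative degree and vanishes. The paper sees this on the chain level: $\iota(H)$ lands in degree $h(H)$, above the top degree of $\Psi^K(x\reg)$, so $\Psi^K(\iota(H))=0$.
\item $h(H)=h(K)$: you handle this correctly via $(G/H)^K=\varnothing$.
\end{itemize}
The paper treats the case $|H_1|>|H_2|$ ``dually'' by switching to $\iota(H_2)$.

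With this change, the rest of your argument matches the paper's proof. That includes injectivity on each stratum via \Cref{prop:map_from_local_coh} and \Cref{thm-injectivity-diagonal}, and the observation that $\cone(\iota(H))\in\catP(H,\frakp)$ for all $\frakp$.
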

    \begin{proof}
        Let $\catP = \catP(H_1,\mathfrak{p})$ and $\catQ = \catP(H_2, \mathfrak{q})$ in $\Spc(\catK(G))$ satisfy $\rho(\catP) = \rho(\catQ)$ in $\Spec^h(\on{R}^{\sbull\sbull}(G))$. This implies that $\catP \in \open(f)$ if and only if $\catQ \in \open(f)$ for every homogeneous $f \in \on{R}^{\sbull\sbull}(G)$. In particular, for every $p$-subgroup $H\in \Sub_p(G)$, we have $\catP \in \open(\iota(H))$ if and only if $\catQ \in \open(\iota(H))$.
        
        Suppose for contradiction that $H_2 \neq_G H_1$. If $|H_1| < |H_2|$, then $\iota(H_1)$ and $\iota(H_2)$ have images (as morphisms of chain complexes) in different homological degree, and in particular, $\Psi^{H_2}(\iota(H_1)) = 0$. Otherwise if $|H_1| = |H_2|$ but $H_1 \neq_G H_2$, \Cref{prop:forerunner_props} implies both $\Psi^{H_2}(\iota(H_1)) = 0$ is zero and $\Psi^{H_1}(\iota(H_2)) = 0$.  
        Now, consider the map $\check{\psi}^{H_2}\colon  V_{\Weyl{G}{H_2}} \hookrightarrow \Spc(\catK(G))$. In either case, we have:
        
        \begin{align*}
            (\check{\psi}^{H_2})\inv(\open(\iota(H_1)))&= \open(\cone(\check{\Psi}^{H_2}(\iota(H_1))))\\
            &= \open(0\colon  k  \to \check{\Psi}^{H_2}(C))\\
            &= \varnothing
        \end{align*}
        
        Therefore, $V_{\Weyl{G}{H_2}}\cap \open(\iota_i(H_1)) = \varnothing$ in $\Spc(\catK(G))$. On the other hand, $V_{\Weyl{G}{H_1}} \subseteq \open(\iota(H_1))$, since $\iota(H_1)$ is invertible in $\catK(G)/\catM(H_1)$. Therefore, $\catP \in\open(\iota(H_1)) $ but $\catQ \not\in \open(\iota(H_1))$, a contradiction. The remaining case of $|H_1| > |H_2|$ follows dually, and we conclude $H_1 =_G H_2$.

        Using that $\Spc(\catK(G)) = \bigsqcup_{H \in \Sub_p(G)/G} V_{\Weyl{G}{H}}$, we are left to verify that 
        \[
        V_{\Weyl{G}{H}} \xrightarrow{\check{\psi}^{H}} \Spc(\catK(G)) \xrightarrow{\rho} \Spec^h(\on{R}^{\sbull\sbull}(G))
        \]
      is injective. But this map is the diagonal of the corresponding square from \Cref{prop:map_from_local_coh} associated to $\on{R}^{\sbull\sbull}(G)$ which is injective by \Cref{thm-injectivity-diagonal}. 
    \end{proof}

    \begin{remark}\label{rmk:restricted_comp_map}
        Given $\on{R}^{\sbull\sbull}(G) \subseteq \tH(G)$ as in \Cref{thm-injectivity-diagonal}, because $\iota(H) \in \on{R}^{\sbull\sbull}(G)$ for any $H \in \Sub_p(G)$, one can form the analogous localization \[\calO_R^\sbull(H) \coloneqq  \big(\on{R}^{\sbull\sbull}(G)[S_H\inv]\big)_{0\text{-twist}},\] and we have an inclusion $\alpha_H\colon \calO_R^\sbull(H) \to \calO_G^\sbull(H)$. Consequently, we have the following commutative diagram from \Cref{prop:comm_square_from_localization}.
        \begin{center}
            \begin{tikzcd}
                U_G(H) = \Spc(\catL_{G}(H)) \ar[r, hookrightarrow] \ar[d, "\comp_{\catL(H)}"'] & \Spc(\catK(G)) \ar[d, "\comp_G"]\\
                \Spec^h(\calO^\sbull_{G}(H))  \ar[r, hookrightarrow] \ar[d, "\Spec^h(\alpha_H)"'] & \Spec^h(\tH(G)) \ar[d, "\Spec^h(\alpha)"']\\
                \Spec^h(\calO^\sbull_R(H)) \ar[r,hookrightarrow] & \Spec^h(\on{R}^{\sbull\sbull}(G))
            \end{tikzcd}
        \end{center}
    \end{remark}

    \begin{theorem}
  \label{thm:open_immersion_subring}
       The comparison map 
       \[\comp_G\colon \Spc(\catK(G)) \to \Spec^h(\tH(G))\] is an open immersion, which restricts to a homeomorphism \[\comp_{\catL(H)}\colon U_G(H)=\Spc(\catL_G(H)) \to \Spec^h(\calO_{G}^\sbull(H))\] 
       for all $H \in \Sub_p(G)$.
    \end{theorem}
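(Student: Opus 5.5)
The plan is to reduce to the local statement on each $U_G(H)$ and then glue. By \Cref{prop:comm_square_from_localization}, $\comp_G$ restricted to $U_G(H)$ is $\comp_{\catL(H)}$ followed by the open inclusion $\Spec^h(\calO^\sbull_G(H)) \hookrightarrow \Spec^h(\tH(G))$. That inclusion is the basic open $D(\iota(H))$, since localizing at $S_H$ and taking twist-zero parts identifies homogeneous primes of $\tH(G)[S_H\inv]$ with those of $\calO^\sbull_G(H)$, because $\iota(H)$ is a unit of positive twist degree. Also $\comp_G^{-1}(D(\iota(H))) = U_G(H)$ by \Cref{prop-comparison-reducedcohomology}. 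So once each $\comp_{\catL(H)}$ is a homeomorphism, $\comp_G$ is a local homeomorphism onto the open set $\bigcup_H D(\iota(H))$, relative to the open cover $\{U_G(H)\}$. Together with injectivity from \Cref{thm:injectivity}, this makes $\comp_G$ an open immersion.

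For the local homeomorphism I would work with the finitely generated subalgebra $\on{R}^{\sbull\sbull}(G)$ of \Cref{thm-injectivity-diagonal} and the diagram of \Cref{rmk:restricted_comp_map}. Since $\on{R}^{\sbull\sbull}(G)$ is finitely generated, $\calO^\sbull_R(H)$ is a finitely generated $\bbZ$-graded $k$-algebra: it is generated by degree-zero-twist fractions $f/\iota(H)^m$ built from the finitely many generators. Hence it is Noetherian. The composite $\rho_H\colon U_G(H) = \Spc(\catL_G(H)) \to \Spec^h(\calO^\sbull_R(H))$ is injective by \Cref{thm:injectivity}. I would then invoke Balmer's criterion \cite{Bal10} in its form for a graded central subring: if the graded ring acting centrally is Noetherian and the comparison map is injective, it is a homeomorphism onto its image and is surjective. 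Alternatively, I would show surjectivity directly. Here the key input is that $\catL_G(H)$ has Noetherian spectrum $U_G(H)$, and that every homogeneous element of $\calO^\sbull_G(H)$ is integral, or at least nilpotent modulo $\calO^\sbull_R(H)$, in the sense needed for Balmer's \cite[Thm.~7.3]{Bal10}. One needs the endomorphism ring of $\bbone$ in $\catL_G(H)$ to be "coherent" enough, with Homs finitely generated over it. I would argue this via the local-to-cohomology picture: on $V_{\Weyl{G}{H}}$ the map is identified with $\comp_{\on{D}_b}$ composed with the surjection of \Cref{thm-injectivity-diagonal}, and that composite is a homeomorphism by Benson--Carlson--Rickard/Balmer.

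Given that $\rho_H$ is a homeomorphism, I would factor it as $\Spec^h(\alpha_H)\circ \comp_{\catL(H)}$. Then $\comp_{\catL(H)}$ is injective and closed-embedding-like. It remains to show that $\Spec^h(\alpha_H)$ is injective, i.e. that $\calO^\sbull_G(H)$ is integral or radicially related over $\calO^\sbull_R(H)$. I would get this by observing that we may enlarge $\on{R}^{\sbull\sbull}(G)$ by any finitely many further homogeneous elements without losing the previous properties. Hence, for any homogeneous $f\in\calO^\sbull_G(H)$, the map $\comp_{\catL(H)}$ is compared against the homeomorphism for $\on{R}[f]$. Every prime of $\calO^\sbull_G(H)$ then lies over a point in the image, and distinct primes are separated by some such $f$. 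This gives bijectivity of $\comp_{\catL(H)}$. Continuity and openness then follow because basic opens $D(f)$ pull back to $\open(f)$, which are opens of the form $\rho'^{-1}$ for the enlarged subalgebra, which is a homeomorphism.

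The main obstacle is the surjectivity/homeomorphism step for $\rho_H$ with $H\ne 1$. There $U_G(H)$ contains primes $\catP(K,\frakp)$ with $K\ne_G H$, which are not covered by the cohomological identification of \Cref{thm-injectivity-diagonal}. The first thing I would try is Balmer's Noetherian criterion directly, using finite generation of $\calO^\sbull_R(H)$ and injectivity. If that criterion requires more hypotheses than available, I would fall back on a constructibility argument. $U_G(H)$ is a spectral space partitioned into finitely many locally closed strata $V_{\Weyl{G}{K}}$, each mapping homeomorphically onto a constructible subset. One then checks that specialization relations between strata are detected by the forerunners $\iota(K)\in\on{R}^{\sbull\sbull}(G)$.
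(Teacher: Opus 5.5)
Your overall architecture matches the paper's. You reduce, via injectivity (\Cref{thm:injectivity}) and \Cref{prop:comm_square_from_localization}, to showing that each $\comp_{\catL(H)}$ is a homeomorphism. You prove this first for the finitely generated subalgebra $\calO^\sbull_R(H)$. You then pass to $\calO^\sbull_G(H)$ by adjoining separating elements, which is essentially the argument of \Cref{thm-bijection}. The gap is the middle step: that $\rho_H\colon U_G(H)\to\Spec^h(\calO^\sbull_R(H))$ is a homeomorphism. To my knowledge \cite{Bal10} contains no criterion of the form ``Noetherian plus injective implies homeomorphism''. What Balmer's argument gives, and what \Cref{thm:surj} adapts to subalgebras, is only surjectivity. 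Injectivity plus surjectivity gives a bijective spectral map between spectral spaces. Such a map is a homeomorphism only if it also reflects specialization: $\rho_H(\catP)\subseteq\rho_H(\catQ)$ must force $\catQ\subseteq\catP$, equivalently $\rho_H(\supp a)$ must be closed for every object $a$. Nothing in your proposal proves this. Your fallbacks do not either:
\begin{itemize}
\item the identification with $\comp_{\on{D}_b}$ only controls the single stratum $V_{\Weyl{G}{H}}$;
\item integrality of $\calO^\sbull_G(H)$ over $\calO^\sbull_R(H)$ is neither proved nor needed;
\item the forerunners $\iota(K)$ only tell strata apart, whereas the inclusions $\catP(K,\frakp)\subseteq\catP(L,\frakq)$ between strata depend on the cohomological primes. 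So ``checking that specialization is detected by forerunners'' amounts to knowing the topology of $\Spc(\catK(G))$ in advance.
\end{itemize}

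The missing ingredient is tt-stratification. The paper uses that $\catD(G)$ is stratified \cite{BG25}, hence so is $\catD(G)|_{U_G(H)}$. It also uses that the Noetherian ring $\calO^\sbull_R(H)$ acts centrally, so the BIK local cohomology functors $\Gamma_{\frakp}$ are available. Injectivity of $\rho_H$ gives $\Gamma_{\rho_H(\catP)}x\simeq \on{g}(\catP)\otimes x$, whence $\rho_H(\supp x)=\supp_{\on{R}}(x)$ for compact $x$. The right-hand side is specialization closed. Together with the local-to-global principle, this yields both surjectivity and that $\rho_H$ carries closed sets to closed sets (\Cref{lem-homeo-subring}). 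After that, your enlargement step goes through. It only needs bijectivity for each finitely generated enlargement $\on{R}[g]$, which \Cref{thm:surj} and the inherited injectivity supply. The homeomorphism for $\calO^\sbull_G(H)$ then follows because $\rho_H$ factors through the continuous bijection $\comp_{\catL(H)}$, as in \Cref{thm-homeo-fullcohomologyring}.
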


     \begin{proof}
         Since $\{U_G(H)\mid H\in \Sub_p(G)\}$ is an open cover of $\Spc(\catK(G))$ and $\comp_G$ is injective by \Cref{thm:injectivity}, it suffices to prove the second claim.

         Let $H \in \Sub_p(G)$.  In view of \Cref{prop:central_locs_are_open_locs}, and the fact that idempotent completion does not change the spectrum, we may instead work with $\catK(G)|_{U_G(H)}$ and its corresponding comparison map 
         \[
         \comp_{U_G(H)}\colon U_G(H)= \Spc(\catK(G)|_{U_G(H)}) \to \Spec^h(\calO_{G}^\sbull(H)).
         \] 
         Let $\alpha\colon  \on{R}^{\sbull\sbull}(G)\to \tH(G)$ be the graded subalgebra from \Cref{thm-injectivity-diagonal}. Since $\Spec^h(\alpha)\circ \comp_G$ is injective by \Cref{thm:injectivity}, we get that $\Spec^h(\alpha_H)\circ \comp_{U_G(H)}$ is injective; here we use the notation from \Cref{rmk:restricted_comp_map}. 

         Now recall that $\catK(G)|_{U_G(H)}$ is the rigid-compact part of the big tt-category $\catD(G)|_{U_G(H)}$. Since $\catD(G)$ is tt-stratified by \cite{BG25}, so is $\catD(G)|_{U_G(H)}$. Moreover, since $\on{R}^{\sbull\sbull}(G)$ is finitely generated, so is $\calO_R^\sbull(H)$. We may therefore invoke \Cref{thm-homeo-fullcohomologyring}, which gives the desired result. 
     \end{proof}
   
     \begin{corollary}\label{coro-dirac-scheme}
        Let $\calO^\sbull_G$ denote the sheaf of $\bbZ$-graded rings given by sheafifying $U \mapsto \Spec^h(\End_{\catK(G)|_{U}})^\sbull(\bbone)$. Then $(\Spc(\catK), \calO^\sbull_G)$ is a Dirac scheme in the sense of \cite{HP25}.
    \end{corollary}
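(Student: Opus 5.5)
We deduce the corollary from \Cref{thm:open_immersion_subring} by gluing affine Dirac schemes. In the sense of \cite{HP25}, a Dirac scheme is a locally graded-ringed space that is locally isomorphic to $\Spec^h$ of a graded-commutative ring with its structure sheaf. The plan is to show that $\{U_G(H)\}_{H\in\Sub_p(G)/G}$ is a cover of $\Spc(\catK(G))$ by affine Dirac opens, namely $(U_G(H),\calO^\sbull_G|_{U_G(H)})\cong \Spec^h(\calO^\sbull_G(H))$.

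First, fix $H$ and recall from \Cref{prop:central_locs_are_open_locs} that $\catK(G)|_{U_G(H)}\simeq(\catL_G(H))^\natural$. Idempotent completion changes neither the spectrum nor the graded endomorphism ring of the unit. Hence $\End^\sbull_{\catK(G)|_{U_G(H)}}(\bbone)\cong\calO^\sbull_G(H)$ by \Cref{def:localizations}(b).

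Second, for a smaller open $V\subseteq U_G(H)$, we compare the presheaf value $\End^\sbull_{\catK(G)|_V}(\bbone)$ with the structure sheaf of $\Spec^h(\calO^\sbull_G(H))$ on $\comp_{\catL(H)}(V)$. Take a basic open $V=\open(f)\cap U_G(H)$ with $f\in\tH(G)$ homogeneous; after multiplying by powers of $\iota(H)$ we may regard $f$ as an element of $\calO^\sbull_G(H)$. The localization $\catK(G)|_V$ is then again a central localization of $\catL_G(H)$, this time at $f$, so its graded unit endomorphisms are $\calO^\sbull_G(H)[f\inv]$. Since $\comp_{\catL(H)}$ is a homeomorphism by \Cref{thm:open_immersion_subring} and $\comp^{-1}(D(f))=\open(f)$ by \Cref{prop-comparison-reducedcohomology}, basic opens correspond to basic opens. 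On these opens the presheaf therefore agrees with the standard presheaf $D(f)\mapsto\calO^\sbull_G(H)[f\inv]$ defining the structure sheaf of $\Spec^h$.

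After sheafification, $\comp_{\catL(H)}$ thus becomes an isomorphism of graded-ringed spaces $(U_G(H),\calO^\sbull_G|_{U_G(H)})\cong\Spec^h(\calO^\sbull_G(H))$. It remains to check that the stalks are graded-local, which is automatic for $\Spec^h$, and that the structure maps are compatible on overlaps. Compatibility holds because all identifications are induced by the global comparison map $\comp_G$ through the commuting squares of \Cref{prop:comm_square_from_localization}.

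The main obstacle is the second step: justifying that localizing $\catK(G)$ on the open $\open(f)\cap U_G(H)$ is exactly the central localization at $\{\iota(H),f\}$, and that these basic opens form a basis of $U_G(H)$. For the first point I would use \cite[Theorem 3.6]{DS14} exactly as in \Cref{prop:central_locs_are_open_locs}. For the second, the basis property transports from $\Spec^h(\calO^\sbull_G(H))$ via the homeomorphism of \Cref{thm:open_immersion_subring}, so no stratification input beyond that theorem is needed.
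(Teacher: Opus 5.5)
Your proposal is correct and follows the paper's route: the paper's proof is the single observation that $\{U_G(H)\}_{H\in\Sub_p(G)/G}$ is an affine cover by \Cref{thm:open_immersion_subring}, and you supply the identification (via central localization on basic opens and sheafification) of $\calO^\sbull_G|_{U_G(H)}$ with the structure sheaf of $\Spec^h(\calO^\sbull_G(H))$, which the paper leaves implicit. Two minor points: to land in twist zero you must divide $f$ by $\iota(H)^n$ rather than multiply by it, and no compatibility on overlaps is needed, since $\calO^\sbull_G$ is already a global sheaf and it suffices that each restriction $\calO^\sbull_G|_{U_G(H)}$ is affine.
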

    \begin{proof}
        The cover $\{U_G(H)\}_{H \in \Sub_p(G)/G}$ is affine by \Cref{thm:open_immersion_subring}.
    \end{proof}

    \section{Immersion for $\calM(G)$-graded twisted cohomology}

    We next show that the analogous results of the previous section also hold for Miller's $\calM(G)$-graded twisted cohomology ring of \cite{Mil25c}, resolving Miller's (weaker) conjecture. The proof follows analogously to the previous section. 

    \begin{proposition}\label{prop:map_from_local_Mgraded}
       Let $G$ be a $p$-group, and $H\leq G$ be a subgroup. For any
       graded subalgebra $\alpha\colon \on{R}^{\sbull\sbull}_{\calM}(G) \to \tH_\calM(G)$ containing the forerunners $\iota^H_x$, for all $x\in \calI(G)$, we have a commutative square:
        \begin{center}
            \begin{tikzcd}
                V_{\Weyl{G}{H}}=\Spc(\on{D}_b(k(\Weyl{G}{H}))) \ar[r, hookrightarrow, "\check\psi^H"] \ar[d, "\comp_{\on{D}_b}"] & \Spc(\catK(G)) \ar[d, "\comp_{\on{R}}"]\\
                \Spec^h(\on{H}^{\sbull}(\Weyl{G}{H})) \ar[r] & \Spec^h(\on{R}^{\sbull\sbull}_\calM(G))
            \end{tikzcd}
        \end{center}
        where $\comp_{\on{R}}=\Spec^h(\alpha)\circ \comp_G$, and the bottom map is induced by the tt-functor $\Psi^H_G$.
    \end{proposition}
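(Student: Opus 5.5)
The plan is to mirror the proof of \Cref{prop:map_from_local_coh}: build the square as a horizontal composite of commutative squares and then use the localization at $H/H$ to identify the left-hand column with cohomology. Set $N = N_G(H)$. Two wrinkles differ from the reduced setting. First, $\Psi^H$ and $\Res^G_N$ need not preserve the $\calM$-grading in a way landing in $\tH_\calM(\Weyl{G}{H})$ on the nose. Second, $\on{R}^{\sbull\sbull}_\calM(G)$ is only a subalgebra. To sidestep both, I will not route through $\tH_\calM$ of the smaller groups. Instead I will define the bottom map directly at the level of rings. The homomorphism
\[
\theta_H\colon \on{R}^{\sbull\sbull}_\calM(G) \xrightarrow{\alpha} \tH_\calM(G) \to \on{H}^\sbull(\Weyl{G}{H})
\]
sends a homogeneous $f\colon \bbone \to x[-s]$ as follows. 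Apply $\check\Psi^H_G$ to get $\check\Psi^H(f)\colon k \to \check\Psi^H(x)[-s]$ in $\on{D}_b(k(\Weyl{G}{H}))$. Since $G$ is a $p$-group, $x$ is orientable, so $\check\Psi^H(x) \cong k[h_x(H)]$, and we read off an element of $\on{H}^{\sbull}(\Weyl{G}{H})$.

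To make this a canonical ring map rather than one depending on choices of isomorphisms, I will use the forerunners. By \Cref{prop:forerunner_props}(a), $\check\Psi^H(\iota^H_x)$ is an isomorphism $k \cong \check\Psi^H(x)[-h_x(H)]$. This holds for each $x\in\calI(G)$, and by tensoring it holds for every $x\in\calM(G)$, since $\calM(G)$ is generated by $\calI(G)$ up to shift. Multiplicativity follows from monoidality of $\check\Psi^H$. Equivalently, $\theta_H$ factors through the localization $\on{R}^{\sbull\sbull}_\calM(G)[S_H^{-1}]_{\on{0-twist}}$, on which the forerunners become units. This is exactly why the hypothesis $\iota^H_x\in\on{R}^{\sbull\sbull}_\calM(G)$ is needed. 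The resulting ring map is homogeneous, so it induces $\Spec^h(\on{H}^\sbull(\Weyl{G}{H}))\to\Spec^h(\on{R}^{\sbull\sbull}_\calM(G))$, and this is the bottom map.

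Commutativity will be checked via the characterization of comparison maps by supports: it suffices to show that for each homogeneous $f\in \on{R}^{\sbull\sbull}_\calM(G)$, the preimages of $Z(f)$ along the two composites agree. The top-right path gives
\[
(\check\psi^H)^{-1}\supp(\cone f) = \supp(\check\Psi^H\cone f) = \supp(\cone \check\Psi^H f)
\]
in $V_{\Weyl{G}{H}}$. The bottom-left path gives $\comp_{\on{D}_b}^{-1}(Z(\theta_H f)) = \supp(\cone(\theta_H f))$ by Balmer's theorem for $\on{D}_b$. These agree because $\theta_H f$ differs from $\check\Psi^H f$ only by composition with the invertible morphisms $\check\Psi^H(\iota^H_{x})^{-1}$, which do not change the cone up to isomorphism and shift. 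Since points of a homogeneous spectrum are determined by which homogeneous elements they contain, the square commutes.

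The main obstacle is the well-definedness and multiplicativity of $\theta_H$ across the whole monoid $\calM(G)$. In particular, normalizing via $\iota^H_x$ must be compatible with products: one must verify that $\check\Psi^H(\iota^H_{x\otimes y})$ agrees with $\check\Psi^H(\iota^H_x\otimes\iota^H_y)$ up to a unit. This is sidestepped by inverting the products of the $\iota^H_x$ for $x\in\calI(G)$ themselves, since in the $0$-twist localization any two such normalizations differ by a unit of $\on{H}^0=k$, which does not affect prime ideals.
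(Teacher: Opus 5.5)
Your argument is correct, but it takes a different route from the paper. The paper's proof is essentially a citation. It takes the bottom map to be $\Spec^h$ of the composite $\on{R}^{\sbull\sbull}_\calM(G)\subseteq\tH_\calM(G)\to\tH_\calM(\Weyl{G}{H})\to\calO^\sbull_{\calM(\Weyl{G}{H})}(H/H)\cong\on{H}^\sbull(\Weyl{G}{H})$. It then imports commutativity of the corresponding square for the full ring $\tH_\calM(G)$ from \cite[Proposition 5.10]{Mil25c}, the $\calM$-graded analogue of \Cref{prop:map_from_local_coh}, after which composing with $\Spec^h(\alpha)$ is formal.

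You instead define $\theta_H$ directly as $\check\Psi^H$ followed by normalization with the isomorphisms $\check\Psi^H(\iota^H_x)$. You then verify commutativity from scratch by comparing preimages of the closed sets $Z(f)$, using only $\comp^{-1}(Z(f))=\supp(\cone f)$ on both sides and $\Spc(F)^{-1}(\supp a)=\supp(Fa)$. This is self-contained and avoids tracking how $\Res^G_{N_G(H)}$ and $\Psi^H$ move the $\calM$-grading. It also shows that the hypothesis $\iota^H_x\in\on{R}^{\sbull\sbull}_\calM(G)$ plays no role in this statement. Contrary to your remark, the factorization through the localization of $\on{R}^{\sbull\sbull}_\calM(G)$ is never used in your support argument. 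The paper's proof does not use the hypothesis either; it is carried along for \Cref{prop-fg-subalg-Mgraded}.

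What the paper's route buys is that its bottom map is literally $\Spec^h$ of the ring map whose surjectivity is proved in \Cref{prop-fg-subalg-Mgraded}. For your $\theta_H$ you should add that, on each homogeneous piece, it agrees with that composite up to a scalar in $k^\times$. Both normalize $\check\Psi^H(f)$ by some isomorphism $k\cong\check\Psi^H(x)[-h_x(H)]$, and $\End_{\on{D}_b(k(\Weyl{G}{H}))}(k)=k$. Hence the two maps have the same image and induce the same map on spectra.

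One point to tighten: saying that ``any two normalizations differ by a unit of $k$'' does not by itself make $\theta_H$ a ring homomorphism. The cleanest fix uses that for a $p$-group $\calM(G)$ is the free commutative monoid on $\calI(G)$. Indeed, $h$ is injective on $\Pic(\catK(G))$, and the dimension functions of pairwise non-Galois-conjugate irreducible real representations are linearly independent, since they determine the rationalized characters. So normalizing by the product of the irreducible forerunners is canonical and multiplicative, and $\theta_H$ is an honest homogeneous ring map. Alternatively, a map that is $k$-linear on homogeneous pieces and multiplicative up to $k^\times$ still pulls homogeneous primes back to homogeneous primes, with $Z(f)$ pulling back to $Z(\theta_H f)$. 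That is all your support argument needs.
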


    \begin{proof}
         The bottom map is induced by the ring map 
        \[
        \on{R}^{\sbull\sbull}_{\calM}(G) \subseteq \tH_\calM(G) \to \tH_{\calM}(\Weyl{G}{H}) \to \calO^\sbull_{\calM({\Weyl{G}{H}})}(H/H).
        \]
        Hence the result follows by the commutativity of the square from \cite[Proposition 5.10]{Mil25c}.
    \end{proof}

   \begin{proposition}\label{prop-fg-subalg-Mgraded}
      Let $G$ be a $p$-group. There exists a finitely generated graded subalgebra $\on{R}^{\sbull\sbull}_\calM(G)\subseteq\tH_\calM(G)$ containing the forerunners $\iota^H_x$ for all $H \leq G$ and all $x\in \calI(G)$, such that the ring map  
      \[
      \on{R}^{\sbull\sbull}_\calM(G) \subseteq \tH_\calM(G) \to \tH_\calM(\Weyl{G}{H}) \to \calO^\sbull_{\calM(\Weyl{G}{H})}(H/H)\cong \on{H}^\sbull(\Weyl{G}{H})
      \]
      is surjective for any $H\leq G$. Here, the isomorphism follows from \cite[Theorem 4.21]{Mil25c}. In particular, the diagonal map in the square from \Cref{prop:map_from_local_Mgraded} is injective.
   \end{proposition}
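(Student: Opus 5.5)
The plan is to mirror the proof of \Cref{thm-injectivity-diagonal}, replacing $x\reg$-twisting with $\calM$-twisting. Fix $H\leq G$ and write $N=N_G(H)$. By \cite[Theorem 4.21]{Mil25c}, $\calO^\sbull_{\calM(\Weyl{G}{H})}(H/H)\cong \on{H}^\sbull(\Weyl{G}{H})$, and the latter is finitely generated by Evens--Venkov. Clearing denominators (inverting only the forerunners $\iota^{H/H}_y$, $y\in\calI(\Weyl{G}{H})$) gives a finite set of homogeneous elements of $\tH_\calM(\Weyl{G}{H})$ whose images generate $\on{H}^\sbull(\Weyl{G}{H})$. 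I then lift these in two steps, as before.

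\textbf{Step 1: lift along $\Psi^H_N$.} I need surjectivity of $\Psi^H\colon \tH_\calM(N)\to \tH_\calM(N/H)$ onto the relevant degrees. Since $\Psi^H\circ\Inf^N_{N/H}=\id$, every morphism $\bbone\to y[-s]$ in $\catK(N/H)$ is $\Psi^H$ of $\Inf(f)\colon \bbone\to \Inf(y)[-s]$. The twist $\Inf(y)$ is effective with $h_{\Inf y}(S)=h_y(S/H)=0$ for $S$ Sylow, hence lies in $\calM(N)$. Thus $\Psi^H$ is surjective on $\calM$-graded rings, which is exactly where the inflation argument of \Cref{prop:mod_fix_pts_surj} works here (unlike for $x\reg$).

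\textbf{Step 2: lift from $N$ to $G$.} This adapts \Cref{prop:lifting_lemma}. Given homogeneous $f\colon \bbone\to x[-s]$ with $x\in\calM(N)$, I first tensor with forerunners $\iota^H_{y}$ for suitable $y\in\calI(N)$; this does not change the image in $\calO^\sbull_{\calM(\Weyl{G}{H})}(H/H)$, where the forerunners become $1$. The aim is to make the twist a restriction $\Res^G_N(z)$ with $z\in\calM(G)$. Concretely, I would take $z = \catB(\on{Ind}_N^G V)$ shifted to lie in $\calM$, where $x=\catB(V)$ up to shift, using surjectivity of $\catB$ for $p$-groups and the fact that $\Res^G_N\on{Ind}_N^G V$ contains $V$ plus conjugates ${}^gV$. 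I then multiply by the partial forerunner $\tilde\iota^H$ for $\Res^G_N$ of the complementary part, supported on $k(N/H)$-summands, and set $\hat f=\tr^G_N(f\otimes\tilde\iota^H)$. As in \Cref{prop:lifting_lemma}, the terms with $g\notin N$ are killed by $\Psi^H$, so $\check\Psi^H_G(\hat f)=\check\Psi^H_N(f)$. I expect this to be the main obstacle: arranging the twist bookkeeping so that the complementary part is orientable (automatic for $p$-groups), carries a forerunner at $H$ whose $\check\Psi^H$ is the identity, and has total h-mark zero at the Sylow so everything stays in $\calM$. The fallback is to tensor with enough copies of $\iota^H_x$ over $x\in\calI(N)$ so that the twist becomes a restriction of a product of elements of $\calI(G)$, using that every $\Res^G_N$ of an irreducible decomposes into irreducibles.

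\textbf{Step 3: assemble and conclude.} Let $\on{R}^{\sbull\sbull}_\calM(G)$ be generated by all these lifts $\mathcal{G}(H)$ together with the finitely many forerunners $\iota^H_x$, for $H\in\Sub_p(G)/G$ and $x\in\calI(G)$. This is a finite set because $\calI(G)$ is finite. Surjectivity onto $\on{H}^\sbull(\Weyl{G}{H})$ holds by construction. For the injectivity of the diagonal, factor the bottom map of \Cref{prop:map_from_local_Mgraded} as $\Spec^h(\on{H}^\sbull(\Weyl{G}{H}))\to\Spec^h(\on{R}^{\sbull\sbull}_\calM(G))$, induced by a surjective ring map, hence injective. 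The map $\comp_{\on{D}_b}$ is a homeomorphism by Balmer's theorem, so the composite is injective, exactly as in the proof of \Cref{thm-injectivity-diagonal}.
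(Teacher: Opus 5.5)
\textbf{Where your proposal differs.} Your Steps 1 and 3 are the paper's argument: finite generation of $\on{H}^\sbull(\Weyl{G}{H})$, lifting generators, and adjoining the forerunners, exactly as in \Cref{thm-injectivity-diagonal}. The difference is Step 2. The paper gets the lift from $N=N_G(H)$ to $G$ by citing \cite[Proposition 6.1]{Mil25c}. You try to reprove it, and the version you sketch does not work as stated.

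\textbf{Why Step 2 fails.} In \Cref{prop:lifting_lemma}, the terms with $g\notin N$ vanish because of a chain-level support argument inside a single $G$-complex.
\begin{enumerate}
\item The partial forerunner of $x\reg(G)$ takes values in the span of $N/H\subseteq G/H$ inside the $k(G/H)$-summands.
\item After padding, $f\otimes\tilde\iota^H$ takes values in the restriction of the $G$-complex $x\reg(G)^{\otimes(l+1)}$, on which $g$ acts diagonally.
\item So $g$ moves the support of the $\tilde\iota^H$-factor into the span of $gN/H$, which has no $H$-fixed points, and the Brauer quotient kills it.
\end{enumerate}
The Mackey formula alone only disposes of double cosets with $H\not\leq {}^gN$. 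When $H$ normalizes ${}^gH$, which does happen (e.g.\ for a reflection subgroup of $D_8$), this support control is genuinely needed.

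In your version, the partial forerunner belongs to the complementary summand $W$ of $\Res^G_N\on{Ind}^G_N V$, which is only an $N$-representation. This causes two problems.
\begin{enumerate}
\item The splitting $\Res^G_N\catB(S^{\on{Ind}V})\cong\catB(S^V)\otimes\catB(S^W)$ exists only in $\catK(N)$. An element $g\notin N$ permutes the summands of $\on{Ind}^G_N V$, moving $V$ into $W$ and part of $W$ onto $V$. So nothing controls where $g\cdot(f\otimes\tilde\iota^H)$ lands.
\item The forerunner of $\catB(S^W)$ at $H$ sits on $k(N/K)$-summands with $K\geq H$ maximal such that $W^K=W^H$. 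So it need not be supported on $k(N/H)$-summands, contrary to what you assert.
\end{enumerate}
Your fallback fixes the twist, but it still supplies no factor that makes the trace argument run.

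\textbf{The repair.} Separate the two roles you assigned to $W$.
\begin{enumerate}
\item First pad $f$ by the full forerunner $\iota^H_{\catB(S^W)}$ of the $N$-complex. After $\check\Psi^H_N$ this is an isomorphism (part (a) of \Cref{prop:forerunner_props}; orientability is automatic for $p$-groups). So it changes the image in $\calO^\sbull_{\calM(\Weyl{G}{H})}(H/H)$ only by a unit. The twist becomes $\Res^G_N(z)$ with $z=\catB(S^{\on{Ind}V})$ shifted into $\calM(G)$.
\item Then tensor with the partial forerunner $\tilde\iota^H$ at $H$ of the genuine $G$-complex $\catB(S^{\bbR G})$, shifted into $\calM(G)$. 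Since $\dim(\bbR G)^K=[G:K]$ strictly decreases as $K$ grows, $H$ is strictly maximal for this dimension function. Hence $\tilde\iota^H$ lives on the $N/H$-part of the $k(G/H)$-summands.
\end{enumerate}
The argument of \Cref{prop:lifting_lemma} then applies verbatim to $\hat f=\tr^G_N\bigl(f\otimes\iota^H_{\catB(S^W)}\otimes\tilde\iota^H\bigr)$. With this correction, or simply by citing \cite[Proposition 6.1]{Mil25c} as the paper does, your proof is complete.
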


   \begin{proof}
    We argue as in the proof of \Cref{thm-injectivity-diagonal}. Indeed, we use the fact that the cohomology ring $\on{H}^\sbull(\Weyl{G}{H})$ is a finitely generated algebra by \cite{Ven59} and \cite{Eve61}. Hence, we invoke \cite[Proposition 6.1]{Mil25c} to lift generators and add the forerunners to the resulting subalgebra.
   \end{proof}

     \begin{theorem}\label{thm-immersion-Mgraded}
        Let $G$ be a $p$-group. The comparison map 
        \[
        \comp_G\colon \Spc(\catK(G))\to \Spec^h(\tH_\calM(G))
        \] 
        is  is an open immersion, and restricts to a homeomorphism 
        \[
        \comp_{\catL_\calM(H)}\colon\Spc(\catL_\catM(G)) = U_\calM(H) \to \Spec^h(\calO^\sbull_\calM(H))
        \] 
        for any $H\leq G$.  
    \end{theorem}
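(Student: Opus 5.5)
The plan is to mirror the proof of \Cref{thm:open_immersion_subring}, replacing the reduced ring by $\tH_\calM(G)$ and the cover $\{U_G(H)\}$ by $\{U_\calM(H)\}_{H\in\Sub_p(G)/G}$.

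\textbf{Injectivity.} Fix the finitely generated graded subalgebra $\alpha\colon \on{R}^{\sbull\sbull}_\calM(G)\subseteq \tH_\calM(G)$ of \Cref{prop-fg-subalg-Mgraded}. First show that $\comp_{\on{R}}=\Spec^h(\alpha)\circ\comp_G$ is injective; injectivity of $\comp_G$ then follows. Suppose $\catP(H_1,\frakp)$ and $\catP(H_2,\frakq)$ have the same image. Then they lie in exactly the same opens $\open(\iota^{H}_x)$ for all $H\leq G$ and all $x\in\calI(G)$.

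If $H_1\neq_G H_2$, we need some $x\in\calI(G)$ with $\check\Psi^{H_2}(\iota^{H_1}_x)=0$, or the symmetric statement. Then $V_{\Weyl{G}{H_2}}\cap\open(\iota^{H_1}_x)=\varnothing$, while $V_{\Weyl{G}{H_1}}\subseteq\open(\iota^{H_1}_x)$ by \Cref{prop:forerunner_props}(a), a contradiction. To find such an $x$, I would argue as follows.
\begin{itemize}
\item Since $\catM(H_2)\notin U_\calM(H_1)$ when $H_2\neq_G H_1$ (the ``if'' direction of \Cref{thm-opencover-Mgraded} is not enough here), I would instead use that $\check\Psi^{H_2}(\iota^{H_1}_x)$ lands in $k[h_x(H_1)]$ versus $k[h_x(H_2)]$.
\item Because $G$ is a $p$-group, the h-mark homomorphism on $\Pic$ is an isomorphism, and the h-marks of the irreducible endotrivials separate conjugacy classes of subgroups (dimension functions of irreducibles detect subgroups up to conjugacy).
\item So some $x\in\calI(G)$ has $h_x(H_1)\neq h_x(H_2)$. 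By \Cref{prop:forerunner_props}(d), $\check\Psi^{H_2}(\iota^{H_1}_x)$ is then a map $k\to k[m]$ with $m\neq0$ in $\on{D}_b$. Such a map is not invertible, so its cone does not vanish at any point of $V_{\Weyl{G}{H_2}}$.
\end{itemize}
The cone of $\iota^{H_1}_x$ then lies outside every $\catP(H_2,-)$, giving the required separation. Once $H_1=_G H_2$, injectivity on $V_{\Weyl{G}{H}}$ is exactly the injectivity of the diagonal in \Cref{prop:map_from_local_Mgraded}, supplied by \Cref{prop-fg-subalg-Mgraded}.

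\textbf{Local homeomorphism.} By \Cref{thm-opencover-Mgraded}, $\{U_\calM(H)\}$ is an open cover. So it suffices to show that $\comp_{\catL_\calM(H)}\colon U_\calM(H)\to\Spec^h(\calO^\sbull_\calM(H))$ is a homeomorphism. As in \Cref{prop:central_locs_are_open_locs}, the central localization $\catL_\calM(H)$ at the multiplicative set $S_H$ generated by the $\iota^H_x$ idempotent-completes to $\catK(G)|_{U_\calM(H)}$, since the complement of $U_\calM(H)$ is $\bigcup_x\supp(\cone(\iota^H_x))$. Set
\[\calO^\sbull_{\on{R}}(H)\coloneqq\big(\on{R}^{\sbull\sbull}_\calM(G)[S_H\inv]\big)_{0\text{-twist}}.\]
This ring is finitely generated, since $\on{R}^{\sbull\sbull}_\calM(G)$ is and $S_H$ has finitely many generators. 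The localized analogue of the diagram in \Cref{rmk:restricted_comp_map} commutes, so $\Spec^h(\alpha_H)\circ\comp_{U_\calM(H)}$ is injective.

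Now $\catD(G)|_{U_\calM(H)}$ is stratified, because stratification passes to finite localizations of $\catD(G)$. Every endotrivial is a tt-line bundle on $U_\calM(H)$, so twist-zero endomorphisms capture all the graded endomorphisms. This puts us in the setting of \Cref{thm-homeo-fullcohomologyring}, applied with the finitely generated subring $\calO^\sbull_{\on{R}}(H)\subseteq\calO^\sbull_\calM(H)$, and it yields the homeomorphism. Combined with injectivity and the open cover, $\comp_G$ is an open immersion.

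\textbf{Main obstacle.} The subtle step is the separation of non-conjugate $H_1,H_2$. Unlike the reduced case, there is no single twist whose forerunners sit in degrees distinguishing subgroup orders. I expect to need the fact that h-marks of irreducible real representations of a $p$-group separate conjugacy classes of subgroups. If that fails, the fallback is \Cref{prop:forerunner_props}(c) applied to a suitable $x\in\calI(G)$ with $h_x(H_1)=h_x(H_2)$ exceeding $h_x(L)$ for the minimal common overgroups $L$, taking $x$ to be a tensor product of irreducibles if necessary.
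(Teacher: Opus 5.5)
Your local step (a finitely generated subalgebra $\on{R}^{\sbull\sbull}_\calM(G)$, its twist-zero localization $\calO^\sbull_{\on{R}}(H)$, stratification of $\catD(G)|_{U_\calM(H)}$, and \Cref{thm-homeo-fullcohomologyring}) is the paper's argument. The paper obtains injectivity of $\comp_G$ by citing \Cref{thm-inj-Mgraded} from \cite{Mil25c}. The gap is in your own proof that $\comp_{\on{R}}$ is injective, on which your local step also rests.

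\textbf{The separation claim is false.} You assert that h-marks of irreducible endotrivials separate conjugacy classes of subgroups, and \Cref{sec-covers} is a counterexample. In $G=\on{UT}_3(\bbF_4)$, conjugating $(x,0,0)$ by $(a,b,c)$ gives $(x,0,xb)$. Hence $H=\{(x,0,0)\}$ and $K=\{(x,0,x^2)\}$ are not conjugate, yet $(x,0,0)\mapsto(x,0,x^2)$ matches their elements up to $G$-conjugacy. Since $\dim V^L=\frac{1}{|L|}\sum_{l\in L}\chi_V(l)$ and $\catB$ is surjective for $p$-groups, $h_y(H)=h_y(K)$ for \emph{every} endotrivial $y$. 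So \Cref{prop:forerunner_props}(d) never applies to this pair.

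Your fallback through (c) with $x\in\calI(G)$ also fails. \Cref{thm:non_stratify_closeds_ex} gives $\iota^H_{x_V}\doteq\iota^K_{x_V}$ for every irreducible $V$, hence $\catM(K)\in U_\calM(H)$; the assertion in your first bullet that $\catM(H_2)\notin U_\calM(H_1)$ for $H_2\neq_G H_1$ is false for the same reason. Thus the irreducible forerunners placed in the subalgebra by \Cref{prop-fg-subalg-Mgraded} do not separate the strata over $H$ and $K$ by your argument. Products of such forerunners do not help either: by the corollary following \Cref{thm:non_stratify_closeds_ex}, a product of forerunners need not be the forerunner of the tensor product.

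\textbf{How to repair it.} Adjoin to $\on{R}^{\sbull\sbull}_\calM(G)$ the finitely many forerunners $\iota^L_{x\reg}$, $L\le G$, viewed in $\tH_\calM(G)$ through the shift $x\reg[-h_{x\reg}(G)]\in\calM(G)$. The subalgebra stays finitely generated and still surjects onto each $\on{H}^\sbull(\Weyl{G}{L})$. Since $h_{x\reg}(L)=2'[G:L]$ strictly decreases along proper inclusions, the separation runs exactly as in \Cref{thm:injectivity}:
\begin{enumerate}
\item for $|H_1|<|H_2|$, one has $\Psi^{H_2}(\iota^{H_1}_{x\reg})=0$ on the chain level;
\item for equal orders, \Cref{prop:forerunner_props}(c) applies.
\end{enumerate}

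\textbf{A smaller slip in your last bullet.} A map $k\to k[m]$ with $m>0$ is a positive-degree cohomology class, and it can be invertible at some points of $V_{\Weyl{G}{H_2}}$. For example, with $G=C_2$ and $x=u_1\in\calI(C_2)$, $\check\Psi^1(\iota^{C_2}_{x})$ is the polynomial generator of $\on{H}^\sbull(C_2;k)$, which is invertible at the generic point of $V_{C_2}$. You must always use the forerunner at the subgroup with the larger h-mark, so that the map has negative degree and is actually zero.
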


    \begin{proof}
        Given that $\{U(H)\}_{H\in \Sub_p(G)/G}$ is an open cover of $\Spc(\catK(G))$ by \Cref{thm-opencover-Mgraded} and that $\comp_G$ is injective by \Cref{thm-inj-Mgraded}, it is enough to prove the second claim. 

        The rest of the proof follows the proof of \Cref{thm:open_immersion_subring} verbatim, by considering the graded subalgebra $\on{R}^{\sbull\sbull}_\calM(G)\subseteq\tH_\calM(G)$ from \Cref{prop-fg-subalg-Mgraded} in place of the graded subalgebra $\on{R}^{\sbull\sbull}(G)\subseteq\tH(G)$ from \Cref{thm-injectivity-diagonal}, and \Cref{prop:map_from_local_Mgraded} in place of  \Cref{prop:map_from_local_coh}.  
    \end{proof}
  
    \section{Comparison of the open covers}\label{sec-covers}

     In this section, we address two open questions from \cite{Mil25c} that will, in particular, allow us to distinguish between the two open covers $\{U_\calM(H)\}$ and $\{U(H)\}$. To be more precise, we first set some notation.

        \begin{notation}
        Given two morphisms $f, g \in \Hom_{\catK(G)}(x,y)$, we write $f \doteq g$ if $f = ag$ for some $a \in k^\times$. 
    \end{notation}

    Let $G$ be a $p$-group. In \cite{Mil25c}, the author posed the question of whether, given any two effective endotrivials $x_1, x_2 \in \Pic(\catK(G))$ with associated cellularized representation spheres $S^{V_1}, S^{V_2}$ and any $H \in \Sub_p(G)$, one always has an equivalence $\iota^H_{x_1} \otimes \iota^H_{x_2} \doteq \iota^H_{x_1 \otimes x_2}.$ This holds if $H \in \{1,G\}$, see \cite[Remark 4.6(a)]{Mil25c}.
    
    Additionally, in \textit{loc. cit.}, the author posed the question of whether the open cover $\{U_\calM(H)\}$ satisfies that $\catM(K) \in U_\calM(G)$ if and only if $K =_G H$.
    We will give an example verifying that the answer to both questions is, in general, no. 

    For this section, let $G = \on{UT}_3(\bbF_4)$ and $p = 2$. We have $|G| = 64$. Write \[(a,b,c) \coloneqq  \begin{pmatrix}1 & a & c\\
        0 &1 & b \\ 
        0 & 0 & 1\end{pmatrix}\] and define $H \coloneqq  \{ (x,0,0) \mid x \in \bbF_4\}$ and $K  \coloneqq  \{(x, 0, x^2 )\mid x \in \bbF_4\}$.
        
    The following are straightforward computations and are left for the reader to check. 
    
    \begin{lemma}\label{lem:ex_lems}
     The following properties hold. 
        \begin{enumerate}
            \item The subgroups $H$ and $K$ are not $G$-conjugates, but are indistinguishable in the sense of \cite[Definition 4.11]{Mil25c}. That is, every element $h \in H$ is $G$-conjugate to an element in $K$. 
            \item We have $Z(G) = \{(0,0, x) \mid x\in \bbF_4\} \cong V_4$, and for every $1 < Z \leq Z(G)$, $HZ/Z$ and $KZ/Z$ are $G/Z$-conjugates.
        \end{enumerate}
    \end{lemma}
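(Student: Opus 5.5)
Both parts come down to explicit computation in $G=\on{UT}_3(\bbF_4)$, using the product formula $(a,b,c)(a',b',c')=(a+a',b+b',c+c'+ab')$. From it I will derive the conjugation formula
\[
(a,b,c)\,(x,y,z)\,(a,b,c)^{-1}=(x,y,z+ay+bx)
\]
(signs are irrelevant in characteristic $2$). I will also check that $H$ and $K$ are subgroups. For $K$ this reduces to $(x,0,x^2)(y,0,y^2)=(x+y,0,x^2+y^2)=(x+y,0,(x+y)^2)$, since the cross term $ab'$ vanishes when $b'=0$.

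For (a), specialise the conjugation formula to $y=0$. Conjugating $(x,0,z)$ by $(a,b,c)$ gives $(x,0,z+bx)$. For $x\neq 0$, letting $b$ run over $\bbF_4$ sweeps the last coordinate over all of $\bbF_4$. Hence $(x,0,0)$ is conjugate to $(x,0,x^2)\in K$, which gives the indistinguishability statement. For non-conjugacy, the same formula shows that the $G$-conjugates of $H$ are exactly the subgroups $H_b=\{(x,0,bx)\mid x\in\bbF_4\}$ for $b\in\bbF_4$. These are graphs of $\bbF_4$-linear maps. Since $K$ is the graph of the Frobenius $x\mapsto x^2$, which is not $\bbF_4$-linear (it fixes $1$ but moves $\omega$ to $\omega^2\neq\omega\cdot 1$), $K$ is no $H_b$.

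For (b), the commutator term $ay'+bx$ vanishes for all $(x,y')$ exactly when $a=b=0$, so $Z(G)=\{(0,0,w)\}\cong V_4$. A subgroup $1<Z\le Z(G)$ has the form $\{(0,0,w)\mid w\in L\}$ for a nonzero $\bbF_2$-subspace $L\subseteq\bbF_4$.
\begin{enumerate}
\item If $L=\bbF_4$, then $HZ=KZ$ and there is nothing to prove.
\item If $L=\bbF_2\lambda$ with $\lambda\neq 0$, I seek $b$ such that conjugation by $(0,b,0)$ carries $HZ$ onto $KZ$. By the formula above this amounts to requiring that the $\bbF_2$-linear map $f_b(x)=x^2+bx$ has image inside $L$. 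Equality then follows by order counting, as both groups have order $8$.
\end{enumerate}

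The remaining step is to choose $b$ in case (2), and it is the one place needing care, because $b=0$ gives image all of $\bbF_4$.
\begin{enumerate}
\item For $b\neq 0$, the kernel of $f_b$ is $\{0,b\}$, so the image is one-dimensional.
\item If $b\neq 1$, the image is spanned by $f_b(1)=1+b$.
\item If $\lambda\neq 1$, take $b=1+\lambda$. This is nonzero and $\neq 1$, and $f_b(1)=\lambda$, so the image is $L$.
\item If $\lambda=1$, take $b=1$. Then $f_1(x)=x^2+x$ takes values in $\{0,1\}=L$.
\end{enumerate}
In either case the conjugate of $HZ/Z$ by the image of $(0,b,0)$ in $G/Z$ is $KZ/Z$, which finishes the proof.
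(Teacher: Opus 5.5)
Your proposal is correct and is the direct matrix computation the paper intends. The paper gives no proof and leaves the lemma to the reader as a straightforward computation. The ingredients all check out:
\begin{itemize}
\item the conjugation formula $(x,y,z)\mapsto(x,y,z+ay+bx)$;
\item the description of the $G$-conjugates of $H$ as the graphs $H_b$ of $\bbF_4$-linear maps, which the Frobenius graph $K$ is not;
\item the choice of $b$ in part (b), which makes $x\mapsto x^2+bx$ take values in $L$ and then concludes by order counting.
\end{itemize}
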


    \begin{proposition}\label{prop:non_faithful_irreps}
        Every irreducible real representation of $G$ is inflated from an irreducible real representation of $G/Z'$ with $1 < Z' \leq Z(G)$. 
    \end{proposition}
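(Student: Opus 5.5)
The plan is to reduce the statement to the classical fact that a finite group with a faithful irreducible complex representation has cyclic center. Then I will use \Cref{lem:ex_lems}(b), which gives $Z(G)\cong V_4$, a non-cyclic group. No tt-geometry enters; this is pure character theory of the group $G=\on{UT}_3(\bbF_4)$.

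\emph{Step 1: pass from real to complex.} Let $V$ be an irreducible real representation of $G$. By the Frobenius--Schur trichotomy, $\bbC\otimes_\bbR V$ is one of three things, according to the type of $V$:
\begin{itemize}
\item $\chi$ (real type);
\item $\chi\oplus\bar\chi$ (complex type);
\item $\chi^{\oplus 2}$ (quaternionic type).
\end{itemize}
Here $\chi$ is an irreducible complex representation. Complexification does not change kernels, and $\ker\bar\chi=\ker\chi$. Hence in all three cases $\ker(V)=\ker(\chi)$ as subgroups of $G$. It therefore suffices to show that $\ker(\chi)\cap Z(G)\neq 1$ for every irreducible complex $\chi$.

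\emph{Step 2: Schur's lemma on the center.} By Schur's lemma, each $z\in Z(G)$ acts on the irreducible $\chi$ by a scalar. This gives a homomorphism $\omega_\chi\colon Z(G)\to\bbC^\times$ with $\ker\omega_\chi=\ker(\chi)\cap Z(G)$. The image of $\omega_\chi$ is a finite subgroup of $\bbC^\times$, hence cyclic. Since $Z(G)=\{(0,0,x)\mid x\in\bbF_4\}\cong V_4$ by \Cref{lem:ex_lems}(b), $Z(G)$ is not cyclic, so $\omega_\chi$ cannot be injective. Thus $Z'\coloneqq \ker(\chi)\cap Z(G)$ satisfies $1<Z'\leq Z(G)$.

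\emph{Step 3: conclude.} Since $Z'\leq\ker(V)$ and $Z'$ is normal (being central), $V$ is the inflation of a real representation of $G/Z'$. That representation is irreducible, because $G$-subrepresentations and $G/Z'$-subrepresentations of $V$ coincide.

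The only point requiring care is Step 1, namely checking that complexification preserves the kernel in each Frobenius--Schur type. This follows immediately from $\bbR$-linearity and faithfulness of scalar extension, so I do not expect a genuine obstacle. The one genuinely computational input is the description of $Z(G)$, and that is already recorded in \Cref{lem:ex_lems}.
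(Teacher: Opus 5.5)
Your proof is correct and takes the same route as the paper. The paper simply cites the standard fact that a group with a faithful irreducible representation has cyclic center and applies it with $Z(G)\cong V_4$; you additionally prove that fact via Schur's lemma and spell out the reduction from real to complex irreducibles through the Frobenius--Schur types, using only the direction of the cited fact that holds for arbitrary finite groups.
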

    \begin{proof}
        Follows from the standard fact that a group $G$ has a faithful irreducible representation if and only if $Z(G)$ is cyclic. 
    \end{proof}

  \begin{notation}
      Let $V$ a real representation of $G$. We write $x_V$ to denote the endotrivial complex $\mathcal{B}(S^V)$. 
  \end{notation}
   
    \begin{theorem}\label{thm:non_stratify_closeds_ex}
        For every irreducible real representation $V$ of $G$, we have $\iota^H_{x_V} \doteq \iota^K_{x_V}$. Consequently, $U_{\calM}(K) = U_\calM(H)$ and $\catM(H),\catM(K) \in U_\calM(H) = U_\calM(K)$. 
    \end{theorem}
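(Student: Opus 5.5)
By \Cref{prop:non_faithful_irreps}, every irreducible real representation $V$ of $G$ has the form $V = \Inf^G_{G/Z'} W$ for some $1 < Z' \leq Z(G)$ and some irreducible real representation $W$ of $\bar G \coloneqq G/Z'$. The plan is to reduce the comparison of $\iota^H_{x_V}$ and $\iota^K_{x_V}$ to the quotient $\bar G$, where \Cref{lem:ex_lems}(b) says $\bar H \coloneqq HZ'/Z'$ and $\bar K \coloneqq KZ'/Z'$ are conjugate.

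\textbf{Step 1: cells.} We cellularize $S^V$ by inflating a $\bar G$-CW structure on $S^W$. With this choice $x_V = \Inf^G_{\bar G}(x_W)$ as complexes of permutation modules. Since $Z'$ acts trivially on $V$, we have $V^H = V^{HZ'} = W^{\bar H}$, and similarly for $K$. In particular $\dim_V(H) = \dim_V(HZ')$, so $H$ is never maximal for its fixed-point dimension unless $Z' \leq H$.

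\textbf{Step 2: reduce to maximal subgroups.} By \Cref{cons:forerunners}, $\iota^H_{x_V} = \iota^{L}_{x_V}$, where $L \geq HZ'$ is the maximal subgroup with $V^L = V^H$. Concretely, $L$ is the preimage of the maximal subgroup $\bar L$ of $\bar G$ fixing $W^{\bar H}$, and the analogous statement holds for $K$.

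\textbf{Step 3: compare in the quotient.} Under inflation, the forerunner at such an $L \geq Z'$ is the inflation of the forerunner of $x_W$ at $\bar L$. This is the point that needs checking: the cells of $S^V$ stabilized by $L$ are exactly the cells of $S^W$ stabilized by $\bar L$, and the trace sums over $[G/N_G(L)] = [\bar G/N_{\bar G}(\bar L)]$ agree. Since $\bar H$ and $\bar K$ are $\bar G$-conjugate, the corresponding maximal subgroups $\bar L_H$ and $\bar L_K$ are conjugate. \Cref{prop:forerunner_props}(b) then gives $\iota^{\bar L_H}_{x_W} = \iota^{\bar L_K}_{x_W}$, and inflating yields $\iota^H_{x_V} = \iota^K_{x_V}$.

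The relation is $\doteq$ rather than $=$ because of choices. The isomorphism $k \cong \ker(d^H_n)$ in \Cref{cons:forerunners} is only canonical up to a scalar. Also, a different cellularization of $S^V$ might be needed if the paper fixes cellularizations beforehand; by the expectation in the remark following \Cref{prop:forerunner_props}, this again changes forerunners only by units. The expected main obstacle is the independence of cellularization. I would sidestep it by declaring that the forerunners for $x_V$ are computed from the inflated cellularization, which is legitimate since $\calI(G)$ consists of the endotrivials $\catB(S^V)$ for $V$ irreducible.

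\textbf{Step 4: consequences.} Since $\open(f) = \open(af)$ for $a \in k^\times$, we get $\open(\iota^H_{x}) = \open(\iota^K_{x})$ for every $x \in \calI(G)$. Taking the intersection over $\calI(G)$ gives $U_\calM(H) = U_\calM(K)$. By \Cref{thm-opencover-Mgraded}, $\catM(H) \in U_\calM(H)$ and $\catM(K) \in U_\calM(K) = U_\calM(H)$. This is the desired conclusion, noting that $H$ and $K$ are non-conjugate by \Cref{lem:ex_lems}(a).
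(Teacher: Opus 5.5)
Your proposal is correct and follows essentially the same route as the paper: pass to a quotient by a nontrivial central subgroup contained in the kernel of $V$ (the paper uses the full kernel $N$, you use $Z'$), use Lemma~\ref{lem:ex_lems}(b) to get conjugacy there, and transport the equality of forerunners back along inflation. You then use $V^H = V^{HZ'}$ (respectively $V^{HN}$) and the forerunner construction to replace $H$ and $K$ by the larger subgroups, and your extra care about the inflated cellularization and the maximal subgroup $L$ only makes explicit what the paper's short proof leaves implicit.
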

    \begin{proof}
        Suppose $V$ has kernel $N$. By \Cref{prop:non_faithful_irreps}, there exists a nontrivial subgroup $Z \leq Z(G)$ such that $Z \leq N$. Therefore, viewing $V$ as a faithful $G/N$-representation, we have $\iota^{HN/N}_{x_V} \doteq \iota^{KN/N}_{x_V}$. Therefore $\iota^{HN}_{x_V} \doteq \iota^{KN}_{x_V}$, viewing $V$ as a $G$-representation. Now since $N$ acts trivially on $S^V,$ we have $\dim V^H = \dim V^{NH} = \dim V^{NK} = \dim V^K$, and we conclude $\iota^H_{x_V} = \iota^K_{x_V}$ from the forerunner construction. The final statement follows via construction of the opens $U_\calM$.
    \end{proof}

    \begin{corollary}
        There exist forerunners $\iota^H_{x_1}$, $\iota^H_{x_2}$ for which $\iota^H_{x_1} \otimes \iota^H_{x_2} \not\doteq \iota^H_{x_1 \otimes x_2}$ in $\catK(G)$. 
    \end{corollary}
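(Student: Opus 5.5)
We argue by contradiction, working in the example $G = \on{UT}_3(\bbF_4)$ with the subgroups $H$ and $K$ fixed above. The aim is to find $x_1, x_2$ built from irreducible representations, together with a subgroup, for which the product of forerunners coincides with a forerunner at $H$ but not with the corresponding forerunner at $K$. This will contradict the multiplicativity $\iota^H_{x_1} \otimes \iota^H_{x_2} \doteq \iota^H_{x_1 \otimes x_2}$ once we combine it with \Cref{thm:non_stratify_closeds_ex}.

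The natural candidate is the tensor product of the irreducible endotrivials, $x \coloneqq \bigotimes_{V} x_V$, with $V$ ranging over the nontrivial irreducible real representations of $G$, or simply $x\reg$. Since $\bbR G = \bigoplus_V V^{\oplus m_V}$, we have $x\reg \cong \bigotimes_V x_V^{\otimes m_V}$ up to the trivial summand. Suppose, for contradiction, that $\iota^L_{x_1}\otimes \iota^L_{x_2} \doteq \iota^L_{x_1\otimes x_2}$ for all effective $x_1, x_2$ and all $L$. Iterating this over the decomposition gives
\[
\iota^H_{x\reg} \doteq \bigotimes_V (\iota^H_{x_V})^{\otimes m_V} \doteq \bigotimes_V (\iota^K_{x_V})^{\otimes m_V} \doteq \iota^K_{x\reg},
\]
where the middle equivalence is \Cref{thm:non_stratify_closeds_ex}. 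The trivial summand $\bbR$ contributes only a degree shift, and its forerunner is the identity for every subgroup, so it plays no role here.

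On the other hand, $H \neq_G K$ by \Cref{lem:ex_lems}(a), and both have order $4$. Hence $h_{x\reg}(H) = h_{x\reg}(K) = [G:4] = 16$. We will apply \Cref{prop:forerunner_props}(c) to $x\reg$. This requires checking that every minimal $L$ containing $G$-conjugates of both $H$ and $K$ has $[G:L] < 16$. That inequality is automatic, because such an $L$ properly contains $H$. Part (c) then gives $\Psi^K(\iota^H_{x\reg}) = 0$. By \Cref{prop:forerunner_props}(a), however, $\Psi^K(\iota^K_{x\reg})$ is a quasi-isomorphism. Therefore $\iota^H_{x\reg} \not\doteq \iota^K_{x\reg}$, which is the desired contradiction. (Equivalently, $\catM(H) \in U(H)$ but $\catM(H) \notin U(K)$, whereas the $U_\calM$ opens agree.)

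The main obstacle is making the contradiction precise. The failure of multiplicativity has to occur for some specific pair, and since it is derived by contradiction, finitely many applications of the hypothesised identity produce a concrete pair $x_1, x_2$ at $H$ or at $K$ where it fails. We must also make sure the iterated products are taken with the cellularizations used in the forerunner construction. We would handle this by phrasing the hypothesis at the level of the representations, with the product cellular structure on $S^{V\oplus W} = S^V\wedge S^W$. If this turns out to be delicate, the fallback is to state the corollary existentially, for some cellularization.
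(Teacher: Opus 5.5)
Your proposal is correct and is essentially the paper's argument. Both combine $\iota^H_{x_V}\doteq\iota^K_{x_V}$ from \Cref{thm:non_stratify_closeds_ex} with \Cref{prop:forerunner_props}(a),(c) applied to $x\reg$: then $\Psi^K$ kills $\iota^H_{x\reg}$ but sends the product of the irreducible forerunners to a quasi-isomorphism.

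The only difference is your detour back to $\iota^K_{x\reg}$. That step also assumes multiplicativity at $K$, so your contradiction only locates the failure at $H$ or at $K$. To pin it at $H$, as the statement reads, observe directly that $\Psi^K\bigl(\bigotimes_V(\iota^K_{x_V})^{\otimes m_V}\bigr)$ is a tensor product of quasi-isomorphisms, as the paper does.
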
    
    \begin{proof}
        We have from \Cref{prop:forerunner_props} that $\Psi^{H'}(\iota^H_{x\reg})$ is a quasi-isomorphism if and only if $H'=_G H$, but \[\Psi^K\left(\bigotimes_{V \mid \bbR G} \iota^H_{x_V}\right)\] is a quasi-isomorphism by \Cref{thm:non_stratify_closeds_ex}. Indeed, $\catM(K) \in U_\calM(H)$ if and only if $\catM(K) \in \open(\iota^H_{x_V})$ for all irreducible real representations $V$ of $G$. Since $\catM_K = \ker(\check\Psi^K)$, the result follows. 
    \end{proof}

    \begin{remark}
    In fact, for every irreducible real representation $V$ of $G$, $\iota^H_{x_V}$ has image in the maximal nonzero homological degree of $x_V$, and therefore \cite[Lemma 4.2]{Mil25c} implies $\dim_k\big(\Hom_{\catK(G)}(\bbone, x_V[-\dim_V(H)])\big) = 1$. So there is only one choice of forerunner $\iota^H_{x_V}$ up to scaling for which $\Psi^H(\iota^H_{x_V})$ is a quasi-isomorphism. Therefore, \Cref{thm:non_stratify_closeds_ex} asserts that in fact, it is impossible to distinguish $\catM(H), \catM(K)$ by \emph{any} forerunner construction for the irreducible endotrivials alone; to obtain a finer open cover, one must invert forerunners for non-irreducible endotrivials.      
    \end{remark}

   \begin{corollary}
       The open cover $\{U(H)\}$ of $\Spc(\catK(G))$ is \textit{finer} that the open cover $\{U_\calM(H)\}$. 
   \end{corollary}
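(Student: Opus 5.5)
The statement has two parts. First, every $U(L)$ is contained in some member of $\{U_\calM(L')\}$; I will aim for $U(L)\subseteq U_\calM(L)$ for every $L\in\Sub_p(G)$. Second, the two covers are genuinely different. The second part follows from what precedes. By \Cref{thm:non_stratify_closeds_ex} we have $U_\calM(H)=U_\calM(K)$, and this open contains both $\catM(H)$ and $\catM(K)$. But $H\neq_G K$ by \Cref{lem:ex_lems}, so the proposition on the cover $\{U_G(H)\}$ shows $\catM(K)\notin U(H)$, and hence $U(H)\neq U(K)$. So the cover $\{U(L)\}$ separates the two closed points $\catM(H),\catM(K)$, which no single member of $\{U_\calM(L)\}$ does. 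It therefore remains to prove the refinement $U(L)\subseteq U_\calM(L)=\bigcap_{V}\open(\iota^L_{x_V})$, the intersection running over nontrivial real irreducibles $V$.

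For the refinement I will work pointwise, using the parametrization $\catP=\catP(J,\frakp)$ of \Cref{not:primes}. A morphism $f$ is invertible at $\catP(J,\frakp)$ exactly when $\check\Psi^J(f)$ is invertible in $\on{D}_b(k(\Weyl{G}{J}))$ localized at $\frakp$, and this is the criterion I will use. Suppose $\catP(J,\frakp)\in U(L)$. Then $\check\Psi^J(\iota(L))$ is nonzero, so it is a nonzero map $k\to\check\Psi^J(x\reg)[-h_{x\reg}(L)]$. Since $\check\Psi^J(x\reg)\cong k[h_{x\reg}(J)]$ by orientability, a nonzero map forces a degree match. Concretely, I will use that $\check\Psi^J(x\reg)$ is concentrated in degree $h_{x\reg}(J)$, while the homological degree of $\iota(L)$ is $h_{x\reg}(L)=[G:L]$. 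Together with the cellular support of the forerunner, this should give $h_{x\reg}(J)=h_{x\reg}(L)$ and $J\leq_G L$. Since $p=2$ here, $h_{x\reg}(J)=[G:J]$, so these conditions force $J=_G L$. Up to conjugation the point therefore lies in the image of $\check\psi^L$, i.e.\ $\catP=\catP(L,\frakp)$. On that stratum each $\iota^L_{x_V}$ satisfies $\Psi^L(\iota^L_{x_V})$ a quasi-isomorphism by \Cref{prop:forerunner_props}(a). Hence $\cone(\iota^L_{x_V})\in\ker\check\Psi^L\subseteq\catP(L,\frakp)$, and $\catP\in\open(\iota^L_{x_V})$ for every $V$.

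The main obstacle is the first step, namely deducing from $\check\Psi^J(\iota(L))\neq 0$ that $J=_G L$. The proof of the proposition on $\{U_G(H)\}$ shows that $\Psi^J(\iota(L))$ is a quasi-isomorphism only when $J=_G L$. Here, however, I only know that it becomes invertible after localizing at $\frakp$, which is a priori weaker than being a quasi-isomorphism. I would try to argue as follows. $\check\Psi^J(\iota(L))$ is a morphism $k\to k[m]$ in $\on{D}_b(k(\Weyl{G}{J}))$ with $m=h_{x\reg}(J)-h_{x\reg}(L)$. If $m\neq 0$, this morphism is a cohomology class of nonzero degree. In that case I would use the explicit trace-sum form of \Cref{cons:forerunners}, with image in summands $k(G/L)$ in a fixed homological degree, to show that $\Psi^J$ of it vanishes unless $J\leq_G L$. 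I would then use that $L\mapsto[G:L]$ is strictly monotone, which makes $m\neq 0$ impossible once $J<_G L$. If this direct argument fails, the fallback is to replace pointwise invertibility with a factorization. I would build both forerunners from a product cellularization of $S^{\bbR G}=\bigwedge_V S^V$, and try to show that $\iota(L)$ divides a power of $\iota^L_{x_V}$ modulo terms killed by every $\Psi^J$ with $J\neq_G L$. That would give $\supp\cone(\iota^L_{x_V})\subseteq\supp\cone(\iota(L))$ directly.
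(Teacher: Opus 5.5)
Your second paragraph is the argument the paper actually relies on. The corollary has no separate proof; it is read off from \Cref{thm:non_stratify_closeds_ex}, which gives $\catM(H),\catM(K)\in U_\calM(H)=U_\calM(K)$, together with the proposition that $\catM(K)\in U_G(H)$ if and only if $K=_GH$. Your further claim that \emph{no} $U_\calM(L)$ separates the two points needs the inflation argument from the proof of \Cref{thm:non_stratify_closeds_ex}, rerun for arbitrary $L$. It does go through, because $HN/N$ and $KN/N$ are conjugate for the kernel $N$ of every nontrivial irreducible. The paper never proves an inclusion $U(L)\subseteq U_\calM(L)$, and the argument you give for it fails.

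The key step, that $\catP(J,\frakp)\in U(L)$ forces $J=_GL$, is false. $U(L)$ is an open neighbourhood of $\catM(L)$, so it contains every generalization of $\catM(L)$. These include primes $\catP(J,\frakq)$ with $J<_GL$, as the remark after the definition of $\catP(H,\frakp)$ notes. A concrete case is $G=C_2$ and $L=G$. There $x\reg$ is $kC_2\to k$ in degrees $2,1$, and $\check\Psi^1(\iota(G))$ is the non-nilpotent generator of $\on{H}^1(C_2;k)$. Hence the generic point of $V_G$ lies in $U(G)$.

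Your ``degree match'' confuses chain-level concentration with derived Hom. $\check\Psi^J(\iota(L))$ is a class in $\on{H}^m(\Weyl{G}{J};k)$ with $m=h_{x\reg}(J)-h_{x\reg}(L)$, and such classes can be nonzero and non-nilpotent when $m>0$. Monotonicity of $[G:-]$ also cuts the other way: $J<_GL$ \emph{forces} $m>0$ rather than excluding it. What the refinement actually requires is this: for $J<L$ (after conjugation) and each nontrivial irreducible $V$, every homogeneous prime of $\on{H}^\sbull(\Weyl{G}{J};k)$ avoiding $\check\Psi^J(\iota(L))$ must avoid $\check\Psi^J(\iota^L_{x_V})$. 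Equivalently, $\check\Psi^J(\iota(L))\in\sqrt{(\check\Psi^J(\iota^L_{x_V}))}$, and nothing in your proposal supplies this.

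Your fallback points the wrong way. An identity $(\iota^L_{x_V})^n=\iota(L)\cdot g$ yields $\supp\cone(\iota(L))\subseteq\supp\cone(\iota^L_{x_V})$, that is, $\open(\iota^L_{x_V})\subseteq U(L)$. You need instead that $\iota^L_{x_V}$ divides a power of $\iota(L)$. The preceding corollary shows $\iota^H_{x\reg}\not\doteq\bigotimes_V\iota^H_{x_V}$ in exactly this group, so the off-diagonal terms of the trace sums in a product cellularization are the real obstacle. Either prove such a divisibility stratum by stratum, or read ``finer'' as the paper does (the $U$-cover separates closed points that the $U_\calM$-cover cannot) and drop the first part.
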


    \appendix
    
    \section{Subrings of the cohomology ring} 
    
    Let $\catK$ be an essentially small tt-category. We set $\catR_\catK^\sbull \coloneqq  \bigoplus_{i \in \bbN} \Hom_\catK(\bbone, \bbone[i])$, following the notation of \cite{Bal10}.
    Recall we have the graded comparison map
    \[\comp_\catK\colon \Spc(\catK) \to \Spec^h(\catR_\catK^\sbull).\] 
    Let $\on{R}^\sbull_\catK \subseteq \catR_\catK^\sbull$ be a graded subalgebra of $\catR^\sbull_\catK$ and let $i\colon \on{R}^\sbull_\catK \to \catR_\catK^\sbull$ denote inclusion. 
    Then we obtain another comparison map for $\on{R}^\sbull_\catK$ via composition:
    \[\comp_R\colon \Spc(\catK) \xrightarrow{\comp_\catK} \Spec^h(\catR_\catK^\sbull) \xrightarrow{i^*} \Spec^h(\on{R}^\sbull_\catK).\]
    A standard result of Balmer \cite{Bal10} states that if $\catR_\catK^\sbull$ is \emph{coherent}, i.e., every finitely generated homogeneous ideal is finitely presented, then the comparison map is automatically surjective. 
    In particular, this holds if $\catR_\catK^\sbull$ is Noetherian. We show this also holds for any coherent graded subalgebra of $\catR_\catK^\sbull$, even if $\catR_\catK^\sbull$ need not be coherent itself. 
    The key point here is that one can still perform the central localization of a category with respect to any prime $\frakp^\sbull \subseteq \on{R}_\catK^\sbull$, as $\on{R}^\sbull_\catK \setminus \frakp^\sbull$ is a multiplicative subset of $\catR_\catK^\sbull$; the rest follows essentially the same as Balmer's proof. 
    
    As an application, we show that under suitable conditions, this result can then be lifted to the entire cohomology ring of $\catR_\catK^\sbull$. 
    
    \begin{proposition}\label{prop:reduction_to_local_endo}
        Let $\on{R}^\sbull_\catK\subseteq \catR_\catK^\sbull$ be a homogeneous subalgebra and let $\frakp \in \Spec^h(\on{R}^\sbull_\catK)$ be a homogeneous prime. Consider the localization \[\catL \coloneqq  \catK_{\frakp^\sbull} = S_{\frakp^\sbull}\inv\catK.\] 
        We have a corresponding local subalgebra of $\catR_\catL^\sbull$ 
        \[\on{R}^\sbull_\catL \coloneqq  \{r/s \in \catR_\catL^\sbull = S_{\frakp^\sbull}\inv\catR_\catK^\sbull \mid r \in \on{R}^\sbull_\catK\}= S_{\frakp^\sbull}\inv \on{R}^\sbull_\catK.\] Suppose that the maximal ideal of $\on{R}_\catL^\sbull$ belongs to the image of the comparison map \[\comp_R\colon \Spc(\catL) \to \Spec^h(\on{R}^\sbull_\catL)\] for $\catL$. Then $\frakp$ belongs to the image of $\comp_{\on{R}}$. 
    \end{proposition}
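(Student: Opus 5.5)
The plan is to imitate Balmer's argument from \cite{Bal10} for surjectivity of the comparison map, pulling a prime of $\catL$ back along the localization functor $q\colon \catK \to \catL = S_{\frakp^\sbull}\inv\catK$. Since $S_{\frakp^\sbull} = \on{R}^\sbull_\catK\setminus\frakp^\sbull$ consists of homogeneous elements, it is a central multiplicative subset of $\catR_\catK^\sbull$. The central localization of \cite{Bal10} therefore exists, with $\catR^\sbull_\catL = S_{\frakp^\sbull}\inv\catR^\sbull_\catK$. Moreover, $q$ identifies $\Spc(\catL)$ with the subspace $\{\catP \in \Spc(\catK)\mid \cone(s)\notin \catP \text{ for all } s\in S_{\frakp^\sbull}\}$, via $\Spc(q)$. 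We will also use the canonical localization map $\ell\colon \on{R}^\sbull_\catK \to \on{R}^\sbull_\catL = S_{\frakp^\sbull}\inv\on{R}^\sbull_\catK$.

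First we check the compatibility square
\[
\comp_{R,\catK}\circ \Spc(q) = \Spec^h(\ell)\circ \comp_{R,\catL}\colon \Spc(\catL)\to \Spec^h(\on{R}^\sbull_\catK).
\]
This is the usual naturality of comparison maps under tt-functors, now restricted to the subalgebras. Concretely, take $\catQ\in\Spc(\catL)$ and a homogeneous $f\in \on{R}^\sbull_\catK$. Then $\cone(f)\notin q\inv(\catQ)$ if and only if $\cone(q(f)) = \cone(f/1)\notin \catQ$, since $q$ is exact and sends $f$ to $f/1$. So the homogeneous elements of the prime $\comp_{R,\catK}(q\inv\catQ)$ are exactly the preimages under $\ell$ of the homogeneous elements of $\comp_{R,\catL}(\catQ)$. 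The same reasoning as in \Cref{prop-comparison-reducedcohomology}, with the description of $\comp$ through cones, makes this routine.

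Next, by hypothesis there is some $\catQ\in\Spc(\catL)$ with $\comp_{R,\catL}(\catQ) = \frakm$, the unique homogeneous maximal ideal $S_{\frakp^\sbull}\inv\frakp^\sbull$ of the local graded ring $\on{R}^\sbull_\catL$. Its preimage $\ell\inv(S_{\frakp^\sbull}\inv \frakp^\sbull)$ is $\frakp^\sbull$; this is standard localization, using that $\frakp^\sbull$ is prime and disjoint from $S_{\frakp^\sbull}$. The square then gives $\comp_{R,\catK}(q\inv\catQ) = \frakp$, as required.

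The main obstacle is making the central localization legitimate when the multiplicative set sits only in the subalgebra $\on{R}^\sbull_\catK$. Two points need care. First, the elements of $S_{\frakp^\sbull}$ must satisfy Balmer's centrality or sign hypothesis, which may force us to restrict to even degrees or to pass through $\epsilon$-central elements as in \cite{Bal10}. Second, the identification $\on{R}^\sbull_\catL = S_{\frakp^\sbull}\inv \on{R}^\sbull_\catK$ inside $\catR^\sbull_\catL$ must be correct: distinct fractions $r/s$ with $r\in\on{R}^\sbull_\catK$ could become equal through a witness $t\in S_{\frakp^\sbull}$. This is harmless, because $t$ lies in $\on{R}^\sbull_\catK$, so equality in the big localization coincides with equality in the small one. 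I would handle both points by citing \cite[Theorem 3.6]{DS14} and \cite[Section 3]{Bal10} directly for $S_{\frakp^\sbull}\subseteq\catR^\sbull_\catK$.
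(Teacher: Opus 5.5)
Your proposal is correct and takes the same approach as the paper. The paper also uses the commutative square relating $\comp_R$ for $\catL$ and for $\catK$, built by stacking Balmer's localization square for the full rings $\catR^\sbull_\catL \to \catR^\sbull_\catK$ on top of the square of subalgebra inclusions; you verify the combined square in one step via cones, and then note that $\Spec^h(\ell)$ sends the maximal ideal $S_{\frakp^\sbull}\inv\frakp^\sbull$ of $\on{R}^\sbull_\catL$ to $\frakp^\sbull$. The centrality caveat you raise is one the paper also leaves implicit, since the statement takes the localization $\catL$ as given.
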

    \begin{proof}
        The ring $R_\catL^\sbull$ is local since $\frakp^\sbull \in \Spec^h(\on{R}^\sbull_\catK)$ is prime. Now, the result follows by commutativity of the following diagram:
        \begin{center}
            \begin{tikzcd}
                \Spc(\catL) \ar[r, hookrightarrow] \ar[d, "\comp_\catL"] & \Spc(\catK) \ar[d, "\comp_\catK"]\\
                \Spec^h(\catR_\catL^\sbull) \ar[r, hookrightarrow] \ar[d, "\iota^*_\catL"] & \Spec^h(\catR_\catK^\sbull) \ar[d, "\iota^*"]\\
                \Spec^h(\on{R}_\catL^\sbull) \ar[r, hookrightarrow] & \Spec^h(\on{R}^\sbull_\catK),
            \end{tikzcd}
        \end{center}
        since the bottom horizontal map sends the maximal ideal of $\on{R}^\sbull_\catL$ to $\frakp^\sbull$.
    \end{proof}
    
    Note that in the above proposition, the ring $\catR_\catL^\sbull$ need not be local.
    
    \begin{proposition}\label{prop:Bal7.2}
        Suppose $\on{R}^\sbull_\catK \subseteq \catR^\sbull_\catK$ is a local homogeneous subalgebra with maximal homogeneous ideal $\frakm^\sbull$. The following are equivalent:
        \begin{enumerate}
            \item There exists a prime $\catP\in \Spec(\catK)$ such that $\comp_R(\catP) = \frakm^\sbull$.
            \item For every $n \geq 1$ and homogeneous $f_1, \dots, f_n\in \frakm^\sbull$, the product \[\cone(f_1)\otimes \cdots \otimes \cone(f_n)\] is nonzero in $\catK$. 
        \end{enumerate}
    \end{proposition}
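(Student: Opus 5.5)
This is the analogue of Balmer's \cite[Proposition 7.2]{Bal10}, and I would adapt his argument to the subalgebra $\on{R}^\sbull_\catK$. Throughout I use the characterization from \Cref{prop-comparison-reducedcohomology} (in its ungraded-twist form): for homogeneous $f$, $\comp_R(\catP)\ni f$ if and only if $\cone(f)\in\catP$. I also use that $\supp(\cone(f))=\{\catP \mid \cone(f)\notin\catP\}$ is closed.

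\textbf{(a) $\Rightarrow$ (b).} Suppose $\comp_R(\catP)=\frakm^\sbull$.
\begin{enumerate}
\item For homogeneous $f_1,\dots,f_n\in\frakm^\sbull$, each $f_i$ lies in $\comp_R(\catP)$.
\item By the characterization above, $\cone(f_i)\notin\catP$ for each $i$.
\item Since $\catP$ is prime, $\cone(f_1)\otimes\cdots\otimes\cone(f_n)\notin\catP$.
\item As $0\in\catP$, this product is in particular nonzero.
\end{enumerate}

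\textbf{(b) $\Rightarrow$ (a).} This direction carries the content.
\begin{enumerate}
\item Consider the family of closed subsets $Z_f\coloneqq\supp(\cone(f))$ for homogeneous $f\in\frakm^\sbull$.
\item Any finite intersection is $Z_{f_1}\cap\cdots\cap Z_{f_n}=\supp(\cone(f_1)\otimes\cdots\otimes\cone(f_n))$. By (b) the object is nonzero, so this support is nonempty, since objects with empty support are $\otimes$-nilpotent, hence zero.
\item $\Spc(\catK)$ is spectral, and the $Z_f$ are closed supports of objects, hence complements of quasi-compact opens, so they are closed in the constructible topology. The constructible topology is compact, so the finite intersection property gives a point $\catP\in\bigcap_{f\in\frakm^\sbull}Z_f$.
\item For this $\catP$, every homogeneous $f\in\frakm^\sbull$ has $\cone(f)\notin\catP$, i.e.\ $\frakm^\sbull\subseteq\comp_R(\catP)$.
\item It remains to rule out homogeneous $g\notin\frakm^\sbull$ lying in $\comp_R(\catP)$. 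Because $\on{R}^\sbull_\catK$ is local, such a $g$ is invertible in $\on{R}^\sbull_\catK\subseteq\catR^\sbull_\catK$.
\item An invertible $g$ is an isomorphism $\bbone\to\bbone[i]$, so $\cone(g)=0\in\catP$, and hence $g\notin\comp_R(\catP)$.
\item Therefore $\comp_R(\catP)=\frakm^\sbull$.
\end{enumerate}

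The main obstacle is the compactness step (3) in (b) $\Rightarrow$ (a). Balmer handles it via Hochster duality or the constructible topology. Alternatively, consider the thick $\otimes$-ideal $\calI$ generated by $\{\cone(f)\}$, or rather the multiplicative set $\catS$ of all finite tensor products of the $\cone(f)$. By (b), $0\notin\catS$, so Balmer's prime-existence lemma (a thick $\otimes$-ideal maximal among those disjoint from a $\otimes$-multiplicative set is prime) yields a prime $\catP$ with $\catP\cap\catS=\varnothing$. That is exactly step (4) and avoids the topology entirely; I would use this second route. One subtlety to check is that the multiplicative set of tensor products of cones is closed under $\otimes$ and excludes zero, which is precisely hypothesis (b).
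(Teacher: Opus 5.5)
Your proposal is correct and is the same adaptation of Balmer's proof of \cite[Proposition 7.2]{Bal10} that the paper invokes: (a)$\Rightarrow$(b) follows from primality of $\catP$, and for (b)$\Rightarrow$(a) you take a prime disjoint from the $\otimes$-multiplicative family of products of cones and use $\comp_R(\catP)\subseteq\frakm^\sbull$. Two small fixes: your opening characterization is reversed and should read $f\in\comp_R(\catP)$ iff $\cone(f)\notin\catP$, which is how you actually use it; and in your discarded topological route, ``empty support, hence zero'' needs rigidity, although (b) with repeated factors already shows that no such product is $\otimes$-nilpotent, so its support is nonempty anyway.
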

    \begin{proof}
        Follows analogously to \cite[Proposition 7.2]{Bal10}.
    \end{proof}
    
    \begin{proposition}\label{prop:Bal7.5}
        Suppose that $\on{R}^\sbull_\catK \subseteq \catR_\catK^\sbull$ is a coherent, local, homogeneous subalgebra with maximal homogeneous ideal $\frakm^\sbull$. Let $f \in \frakm^\sbull$ be homogeneous and let $a \in \catK$ be an object such that $\Hom_\catK^\sbull(\bbone, a)$ is nonzero and coherent as a graded $\on{R}_\catK^\sbull$-module. Then $\Hom_\catK^\sbull(\bbone, a \otimes \cone(f))$ is nonzero and coherent as well. 
    \end{proposition}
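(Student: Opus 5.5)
The plan is to follow Balmer's proof of \cite[Lemma 7.5]{Bal10}, working with graded modules over the subalgebra $\on{R}^\sbull_\catK$ rather than over all of $\catR^\sbull_\catK$. Write $M^\sbull \coloneqq \Hom^\sbull_\catK(\bbone, a)$; it is a graded $\catR^\sbull_\catK$-module, and we regard it as an $\on{R}^\sbull_\catK$-module by restriction. Since $f$ is central of some degree $d$, tensoring $a$ with the exact triangle $\bbone \xrightarrow{f} \bbone[d] \to \cone(f) \to \bbone[1]$ gives the triangle $a \xrightarrow{f\otimes a} a[d] \to a\otimes\cone(f) \to a[1]$. Applying $\Hom^\sbull_\catK(\bbone,-)$ yields a long exact sequence of graded $\on{R}^\sbull_\catK$-modules, and from it a short exact sequence
\[
0 \to \operatorname{coker}\bigl(f\colon M^\sbull \to M^{\sbull+d}\bigr) \to \Hom^\sbull_\catK(\bbone, a\otimes\cone(f)) \to \ker\bigl(f\colon M^{\sbull+1}\to M^{\sbull+1+d}\bigr) \to 0,
\]
where all maps are $\on{R}^\sbull_\catK$-linear by graded-commutativity. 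Centrality of $f$ is the standard fact for elements of $\catR^\sbull_\catK$, up to the usual sign.

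\textbf{Coherence.} Multiplication by $f$ is a graded $\on{R}^\sbull_\catK$-linear map between shifts of the coherent module $M^\sbull$. Over a coherent ring, the coherent modules form an abelian subcategory closed under extensions, so the kernel and cokernel of this map are coherent. The middle term of the short exact sequence is then coherent as an extension of coherent modules.

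\textbf{Nonvanishing.} Suppose $\Hom^\sbull_\catK(\bbone, a\otimes\cone(f)) = 0$. Then both outer terms vanish, so multiplication by $f$ is surjective, i.e. $M^\sbull = f M^\sbull$. Coherence of $M^\sbull$ implies it is finitely generated over $\on{R}^\sbull_\catK$. Because $f \in \frakm^\sbull$, the graded Nakayama lemma for the graded-local ring $\on{R}^\sbull_\catK$ forces $M^\sbull = 0$, which is a contradiction.

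The main point to check is this Nakayama step. The module is $\bbZ$-graded, possibly unbounded in both directions, and $d$ may be zero. I would use the local version: $\frakm^\sbull$ is the unique maximal homogeneous ideal, and every homogeneous element outside it is invertible. With this, the determinant trick applies to a finite set of homogeneous generators. Writing each generator as $m_i = \sum_j f r_{ij} m_j$ with $r_{ij}$ homogeneous, we need $\det(1 - f r_{ij})$ to be invertible. It is a homogeneous-degree-zero element of the form $1 + (\text{element of } \frakm^\sbull)$, hence not in $\frakm^\sbull$, hence a unit. Graded-commutativity signs need care here; if they cause trouble, I would work over the even-degree (commutative) part or invoke Balmer's argument verbatim.
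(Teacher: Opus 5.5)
Your proposal is correct and follows the route the paper intends: the paper's proof only says the statement follows analogously to \cite[Proposition 7.5]{Bal10}. That argument is the long exact sequence from $a \xrightarrow{f\otimes a} a[d] \to a\otimes\cone(f) \to a[1]$, closure of coherent modules over a coherent ring under kernels, cokernels and extensions, and graded Nakayama applied to $\operatorname{coker}(f)$, all carried out over the subalgebra $\on{R}^\sbull_\catK$ as you do. For the Nakayama step you can drop the determinant and its sign issues by inducting on a minimal homogeneous generating set $m_1,\dots,m_n$: writing $m_n = \sum_j f r_j m_j$ gives $(1-fr_n)m_n \in \langle m_1,\dots,m_{n-1}\rangle$, and $1-fr_n$ is homogeneous of degree $0$ and not in $\frakm^\sbull$, hence a central unit.
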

    \begin{proof}
        Follows analogously to \cite[Proposition 7.5]{Bal10}.
    \end{proof}
    
    \begin{theorem}\label{thm:surj}
        Suppose $\on{R}^\sbull_\catK \subseteq \catR_\catK^\sbull$ is a coherent (e.g., Noetherian) homogeneous subalgebra. Then the comparison map \[\comp_R\colon \Spc(\catK) \to \Spec^h(\on{R}^\sbull_\catK) \] is surjective.
    \end{theorem}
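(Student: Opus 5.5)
We follow Balmer's proof of surjectivity \cite[Theorem 7.3]{Bal10}, replacing the full graded endomorphism ring by the coherent subalgebra $\on{R}^\sbull_\catK$ throughout. Fix a homogeneous prime $\frakp^\sbull \in \Spec^h(\on{R}^\sbull_\catK)$. By \Cref{prop:reduction_to_local_endo}, it suffices to work in the central localization $\catL = S_{\frakp^\sbull}\inv\catK$ and show that the maximal ideal of the local subalgebra $\on{R}^\sbull_\catL = S_{\frakp^\sbull}\inv \on{R}^\sbull_\catK$ lies in the image of $\comp_R$ for $\catL$. Here $S_{\frakp^\sbull} = \on{R}^\sbull_\catK\setminus\frakp^\sbull$ is a multiplicative subset of $\catR^\sbull_\catK$ consisting of central elements (graded-commutativity up to sign; for $p$ odd one may restrict to even degrees or square elements), so the central localization exists by \cite{Bal10}. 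Coherence is preserved by localization, so $\on{R}^\sbull_\catL$ is coherent and local with maximal ideal $\frakm^\sbull = \frakp^\sbull_{\frakp^\sbull}$.

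We thereby reduce to the case where $\on{R}^\sbull_\catK$ is coherent and local with maximal homogeneous ideal $\frakm^\sbull$. By \Cref{prop:Bal7.2}, it suffices to show that for any homogeneous $f_1,\dots,f_n \in \frakm^\sbull$ the object $\cone(f_1)\otimes\cdots\otimes\cone(f_n)$ is nonzero. We argue by induction on $n$ using \Cref{prop:Bal7.5}. Start with $a_0 = \bbone$: the module $\Hom^\sbull_\catK(\bbone,\bbone) = \catR^\sbull_\catK$ is nonzero, but we need it coherent as an $\on{R}^\sbull_\catK$-module, and this is not automatic. Instead we take $a_1 = \cone(f_1)$ directly. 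From the exact sequence
\[\catR^\sbull_\catK \xrightarrow{f_1}\catR^\sbull_\catK \to \Hom^\sbull(\bbone,\cone(f_1)) \to \catR^\sbull_\catK\xrightarrow{f_1}\catR^\sbull_\catK\]
we see that we again need finiteness of $\catR^\sbull_\catK$ over $\on{R}^\sbull_\catK$.

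\textbf{This is the main obstacle.} The cleanest fix is to rerun Balmer's argument with the $\on{R}^\sbull_\catK$-module $\on{R}^\sbull_\catK$ itself in place of $\Hom^\sbull(\bbone,a)$. That is, prove the stronger inductive statement that for $a = \cone(f_1)\otimes\cdots\otimes\cone(f_j)$ there is a nonzero coherent $\on{R}^\sbull_\catK$-submodule (or subquotient) of $\Hom^\sbull(\bbone,a)$ on which multiplication by $f_{j+1}$ is analyzed as in \cite[Proposition 7.5]{Bal10}. Nonvanishing then propagates by graded Nakayama, since $f_{j+1}\in\frakm^\sbull$ and coherent (finitely generated, bounded-below graded) modules $M\neq 0$ have $M/f M\neq 0$. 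If that proves delicate, the fallback is to note that the intended applications only require the ambient ring to be finite over the subalgebra locally, and to add that hypothesis.

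Finally, once each $\frakm^\sbull$ is hit in the local case, \Cref{prop:reduction_to_local_endo} transports this back, so every $\frakp^\sbull$ lies in the image of $\comp_R$, giving surjectivity.
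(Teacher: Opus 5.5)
Your plan (localize at $\frakp^\sbull$ via \Cref{prop:reduction_to_local_endo}, then reduce to \Cref{prop:Bal7.2} and induct with \Cref{prop:Bal7.5}) is exactly the paper's proof. Your diagnosis is also correct. Starting the induction at $a=\bbone$ requires $\Hom^\sbull_\catL(\bbone,\bbone)=S_{\frakp^\sbull}\inv\catR^\sbull_\catK$, graded over all of $\bbZ$ as the long exact sequences demand, to be a nonzero coherent $\on{R}^\sbull_\catL$-module. Nothing in the hypotheses gives this, and the paper's one-line appeal to \cite[Theorem 7.3]{Bal10} passes over the same point.

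Your proposed repair cannot work. The triangle $a\xrightarrow{f}a[d]\to a\otimes\cone(f)\to a[1]$ shows that $\Hom^\sbull(\bbone,a\otimes\cone(f))$ is an extension of $\ker(f)$ by $M/fM$. Here $M=\Hom^\sbull(\bbone,a)$ is the \emph{whole} module, so this Hom vanishes exactly when $f$ acts bijectively on $M$. Knowing that some nonzero coherent submodule $M'\subseteq M$ has $M'/fM'\neq 0$ tells you nothing about that.

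In fact no repair exists, because the statement is false as written. Take $\catK=\on{stmod}(kC_2)$ with $k$ of characteristic $2$.
\begin{enumerate}
\item $\Spc(\catK)$ is a single point.
\item $\Omega k\cong k$ gives an isomorphism $x\colon\bbone\to\bbone[1]$, so $\Hom^\sbull_\catK(\bbone,\bbone)=k[x^{\pm1}]$.
\item The subalgebra $\on{R}^\sbull_\catK=k[x]$ is Noetherian and graded-local, but $\Spec^h(k[x])=\{(0),(x)\}$ has two points.
\item Since $x$ is invertible, $\cone(x)=0$ lies in every prime, so no prime containing $x$ is in the image of $\comp_{\on{R}}$.
\end{enumerate}
This is your submodule idea failing at the first step: with $M'=k[x]\subseteq M=k[x^{\pm1}]$ and $f=x$, one has $M'/xM'=k\neq 0$, yet $\cone(x)=0$. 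Under the appendix's $\bbN$-graded convention for $\catR^\sbull_\catK$, the same example even has $\on{R}^\sbull_\catK=\catR^\sbull_\catK=k[x]$. More generally, any homogeneous $f\in\on{R}^\sbull_\catK$ that is a non-unit in $\on{R}^\sbull_\catK$ but a unit in $\catR^\sbull_\catK$ obstructs surjectivity, and coherence does not rule this out.

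So the only sound route is your fallback, which proves a different theorem. Assume, for instance, that $\Hom^\sbull_\catK(\bbone,\bbone)$ is a coherent $\on{R}^\sbull_\catK$-module (say, finitely generated over a Noetherian $\on{R}^\sbull_\catK$). This hypothesis survives localization, and the localized module is nonzero because it contains $\on{R}^\sbull_\catL\neq 0$. Balmer's induction then runs verbatim: coherent modules are closed under kernels, cokernels and extensions, and graded Nakayama gives $M/fM\neq 0$. However, nothing in the paper shows that this extra hypothesis holds where the theorem is applied. Your remark that the intended applications only need it is therefore not yet justified.
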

    \begin{proof}
        Follows analogously to \cite[Theorem 7.3]{Bal10}, using \Cref{prop:reduction_to_local_endo} and \Cref{prop:Bal7.2} to reduce to the case of $\on{R}^\sbull_\catK$ a local ring and then using \Cref{prop:Bal7.5}.
    \end{proof}

    \begin{theorem}\label{thm-bijection}
        Let $\on{R}^\sbull_\catK \subseteq \catR^\sbull_\catK$ be a finitely generated graded subalgebra of the cohomology ring of $\catK$. If $\comp_R\colon \Spc(\catK) \to \Spec^h(\on{R}^\sbull_\catK)$ is injective, then the map $\comp_\catK\colon \Spc(\catK) \to \Spec^h(\catR^\sbull_\catK)$ is a bijection.
    \end{theorem}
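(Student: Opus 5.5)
The plan is to get injectivity of $\comp_\catK$ formally and surjectivity by pinning down the fibres of $i^*$. Injectivity is immediate: $\comp_R = i^*\circ\comp_\catK$ is injective by hypothesis, so $\comp_\catK$ is injective. Since $\on{R}^\sbull_\catK$ is a finitely generated graded $k$-algebra, it is Noetherian, hence coherent. So \Cref{thm:surj} applies and $\comp_R$ is surjective, hence a bijection $\Spc(\catK)\to\Spec^h(\on{R}^\sbull_\catK)$. What remains is surjectivity of $\comp_\catK$: given $\frakq\in\Spec^h(\catR^\sbull_\catK)$, put $\frakp\coloneqq i^*(\frakq)=\frakq\cap\on{R}^\sbull_\catK$ and let $\catP$ be the unique prime with $\comp_R(\catP)=\frakp$. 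The goal is $\comp_\catK(\catP)=\frakq$. Both primes lie over $\frakp$, so it suffices to show that the fibre of $i^*$ over $\frakp$ is a single point.

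First I would localize at $S\coloneqq \on{R}^\sbull_\catK\setminus\frakp$, which is central multiplicative in $\catR^\sbull_\catK$, as in \Cref{prop:reduction_to_local_endo}. Since localization commutes with comparison maps in the diagram there, this reduces to the case where $\on{R}^\sbull$ is local with maximal ideal $\frakm^\sbull=\frakp$. In this case $\catP$ is the unique point of $\Spc(\catL)$ with $\comp_R(\catP)=\frakm^\sbull$, and $\frakq$ is a prime of $\catR^\sbull_\catL$ containing $\frakm^\sbull$.

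The key claim is then that every homogeneous $f\in\catR^\sbull_\catL$ either is invertible (so lies in no prime) or is nilpotent modulo $\frakm^\sbull\catR^\sbull_\catL$, i.e.\ $f^n\in\frakm^\sbull\catR^\sbull_\catL$ for some $n$. Granting this, any prime over $\frakm^\sbull$ equals $\sqrt{\frakm^\sbull\catR^\sbull_\catL}$ intersected with the non-units, which is determined by $\frakm^\sbull$ alone; so the fibre is a point and $\frakq=\comp_\catK(\catP)$. To prove the claim I would use the bijectivity of $\comp_R$ on $\Spc(\catL)$. If $f$ is not invertible, then $\supp(\cone f)\neq\varnothing$, and it is closed under specialization, so it contains the unique closed point $\catP$ of $\Spc(\catL)$. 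Uniqueness holds because every point specializes to a closed point, and closed points map to the maximal ideal $\frakm^\sbull$ by continuity and injectivity. Next, choose generators $g_1,\dots,g_r$ of $\frakm^\sbull$, which is possible since $\on{R}^\sbull$ is Noetherian. Their cone supports cut out exactly $\{\catP\}$, because $\comp_R$ is a bijection and $\comp_R^{-1}(Z(g))=\supp(\cone g)$. Hence $\bigotimes_j\cone(g_j)\in\langle\cone f\rangle$. Finally, I would apply Balmer's nilpotence lemma \cite{Bal10}: if $\cone(g)\in\langle\cone(f)\rangle$ then $g^n$ factors through $f$. Running this "dually" with Koszul objects should yield $f^n\in(g_1,\dots,g_r)\catR^\sbull_\catL$.

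I expect the last step to be the main obstacle. Balmer's lemma naturally produces statements of the form ``$g^n$ is divisible by $f$'', whereas I need $f^n\in\frakm^\sbull\catR^\sbull_\catL$, so some inverse implication is required. What I would try first is: since $\cone(f)$ is supported only at $\catP$, the unit $\bbone$ localized away from $\catP$ sees $f$ as invertible, so $f$ becomes invertible in each $\catL[g_j^{-1}]$. Hence $g_j^{m}=f\cdot h_j$, meaning each $g_j$ is nilpotent modulo $(f)$. If $f\notin\frakq$ this forces $\frakm^\sbull\catR^\sbull_\catL\not\subseteq\frakq$, a contradiction. Thus every non-unit homogeneous $f$ lies in $\frakq$, so $\frakq$ is the ideal of all homogeneous non-units. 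This is exactly $\comp_\catK(\catP)$, because $\catP$ is the unique closed point and every non-unit has a cone supported there.
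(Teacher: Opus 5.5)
Your opening steps are correct and match the paper. Since $\comp_R=i^*\circ\comp_\catK$ is injective, so is $\comp_\catK$. \Cref{thm:surj} makes $\comp_R$ a bijection. Everything then reduces to showing that the fibres of $i^*\colon\Spec^h(\catR^\sbull_\catK)\to\Spec^h(\on{R}^\sbull_\catK)$ are singletons.

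The gap is in your local analysis. The claim that $\catP$ is the \emph{unique} closed point of $\Spc(\catL)$ does not follow from continuity and injectivity, since a continuous bijection need not reflect closed points. The claim is in fact false. With it fall your key claim and the assertion that $\catP\in\supp(\cone f)$ for every non-unit $f$.

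Here is a concrete counterexample. Take $p=2$ and $\catK=\on{D}_b(k)\times\on{stmod}(kC_2)$, with endomorphism rings $\bbZ$-graded as in Balmer's setting. Let $\on{R}^\sbull=k[t]$ with $t=(0,u)$, where $u\in\hat{\on{H}}^1(C_2;k)$ is the invertible periodicity class.
\begin{itemize}
\item $\on{R}^\sbull$ is finitely generated and graded-local, so $\catL=\catK$.
\item The points $\catP=0\times\on{stmod}(kC_2)$ and $\catQ=\on{D}_b(k)\times 0$ are both closed, and $\comp_R$ sends them bijectively to $(t)$ and $(0)$.
\item The idempotent $e_1=(1,0)$ is a homogeneous non-unit with $\supp(\cone e_1)=\{\catQ\}\not\ni\catP$.
\item Also $e_1\notin\sqrt{\frakm\catR^\sbull_\catL}=0\times\hat{\on{H}}^\sbull(C_2;k)$.
\end{itemize}
So the homogeneous non-units do not form an ideal, and the prime over $(t)$ is not ``the ideal of all non-units''. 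This happens even though that fibre really is a single point.

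There are two further problems. Your fallback assumes $\supp(\cone f)=\{\catP\}$ for non-units $f$, which fails already for $f=0$. And, as you suspected, vanishing of $f^n$ on a Koszul object only produces nullhomotopies in that object's endomorphism ring. It does not give an element of $(g_1,\dots,g_r)\catR^\sbull_\catL$.

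The missing idea is that \Cref{thm:surj} applies to \emph{every} finitely generated intermediate algebra, which makes the fibre statement formal. Suppose $\frakq_1\neq\frakq_2$ in $\Spec^h(\catR^\sbull_\catK)$ have the same contraction to $\on{R}^\sbull_\catK$. Pick a homogeneous $f$ lying in exactly one of them, and set $\on{S}^\sbull=\on{R}^\sbull_\catK[f]$.
\begin{itemize}
\item $\on{S}^\sbull$ is still finitely generated, hence Noetherian, so $\comp_S$ is surjective by \Cref{thm:surj}.
\item $\comp_S$ is injective because $\comp_R=i^*_{S,R}\circ\comp_S$ is.
\item Hence $i^*_{S,R}$ is a bijection.
\end{itemize}
But $i^*_{S,R}$ sends the distinct primes $\frakq_1\cap\on{S}^\sbull\neq\frakq_2\cap\on{S}^\sbull$ to the same point, a contradiction. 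This is the paper's argument. It needs no localization, closed-point analysis, or nilpotence input.
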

    \begin{proof}
        \Cref{thm:surj} immediately gives that $\comp_R$ is a bijection. Since $\comp_R = i_R^*\circ \comp_\catK$, $\comp_\catK$ must be injective as well. Suppose for contradiction that $\comp_\catK$ is not surjective. Then there exist two primes $\frakp,\frakq \in \Spec^h(\catR_\catK^\sbull)$ such that $\frakp \in \im(\comp_\catK)$, $\frakq\not\in \im(\comp_\catK)$, and $i^*(\frakp) = i^*(\frakq)$, or equivalently, $\on{R}^\sbull_\catK \cap \frakp = \on{R}^\sbull_\catK \cap \frakq$. 

        For any finitely generated intermediate subalgebra $\on{S}^\sbull_\catK \supseteq \on{R}^\sbull_\catK$ of $\catR_\catK^\sbull$, we have that $\comp_S$ is also surjective by \Cref{thm:surj} and injective since $\comp_R$ is injective and $\comp_R$ factors through $\comp_S$. Since $\frakp, \frakq$ are distinct, there exists a homogeneous element $f \in \catR^\sbull_\catK$ such that either $f \in \frakp \setminus \frakq$ or $f \in \frakq \setminus \frakp$. In either case, choosing $\on{S}^\sbull_\catK$ to be the subalgebra generated by $\on{R}^\sbull_\catK$ and $f$ satisfies that $\on{S}^\sbull_\catK \cap \frakp \neq \on{S}^\sbull_\catK \cap \frakq$ by construction. Set $\frakp' = \on{S}^\sbull_\catK \cap \frakp \in \Spec^h(\on{S}^\sbull_\catK)$ and $\frakq' = \on{S}^\sbull_\catK \cap \frakq \in \Spec^h(\on{S}^\sbull_\catK)$.
        
        We claim that in this case, $\comp_S$ cannot be surjective, a contradiction. Indeed, let $i_{S,R}\colon \on{R}^\sbull_\catK \to \on{S}^\sbull_\catK$ denote inclusion, then by construction, $i_{S,R}^*$ is non-injective, as $i_{S,R}^*(\frakp') = i_{S,R}^*(\frakq')$. Since $\comp_R$ is injective and $\comp_R = i^*_{S,R} \circ \comp_S$, it follows that $\frakq' \not\in \im(\comp_S)$, as desired. Therefore $\comp_\catK$ is a bijection. 
    \end{proof}
    
    We now give a criterion for promoting the previous result to a homeomorphism. Our argument relies on the theory of \textit{tt-stratification} for big tt-categories \cite{BF11,Ste13,BHS23}, together with stratification with respect to a central action of a graded ring \cite{BIK11b}. We refer to \cite{Kra26} for further details on both approaches.   

    \begin{lemma}\label{lem-homeo-subring}
        Assume $\catK = \catT^c$ for $\catT$ a tt-stratified big tt-category.     Let $\on{R}^\sbull_\catK \subseteq \catR^\sbull_\catK$ be a Noetherian graded subalgebra of the cohomology ring of $\catK$. If $\comp_R\colon \Spc(\catK) \to \Spec^h(\on{R}^\sbull_\catK)$ is injective, then it is a homeomorphism.
    \end{lemma}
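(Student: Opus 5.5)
By \Cref{thm:surj}, $\comp_R$ is surjective because $\on{R}^\sbull_\catK$ is Noetherian, hence coherent. Together with the injectivity hypothesis, $\comp_R$ is a continuous bijection, and it remains to show that it is a homeomorphism. Since $\Spc(\catK)$ and $\Spec^h(\on{R}^\sbull_\catK)$ are both spectral spaces and $\comp_R$ is a spectral map, the cleanest route is to show that $\comp_R$ reflects specialization, i.e.\ $\comp_R(\catP)\subseteq\comp_R(\catQ)$ implies that $\catP$ and $\catQ$ are in the corresponding specialization relation in $\Spc(\catK)$. A spectral bijection that is an order isomorphism for the specialization orders need not be a homeomorphism in general, so I would rather show directly that $\comp_R$ sends Thomason closed subsets to closed subsets. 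Concretely, I would show that for every $a\in\catK$ the set $\comp_R(\supp(a))$ is closed, of the form $Z(I)$ for a homogeneous ideal $I$. Since $\supp(a)$ for $a\in\catK$ form a basis of closed sets and $\comp_R$ is bijective, this gives that $\comp_R$ is closed, hence a homeomorphism.

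To produce $I$, I would use stratification. Since $\on{R}^\sbull_\catK$ is a graded-commutative Noetherian ring mapping to the graded centre of $\catT$, it acts centrally on $\catT$ in the sense of Benson--Iyengar--Krause, and yields BIK supports $\supp_R(t)\subseteq\Spec^h(\on{R}^\sbull_\catK)$ via local cohomology functors $\Gamma_\frakp$. The key comparison I would establish is
\[\supp_R(t)=\comp_R(\supp_{\catT}(t))\]
for all $t\in\catT$, where $\supp_\catT$ denotes Balmer--Favi support. To prove it, I would first check the formula on Koszul objects $\cone(f)$ for homogeneous $f\in\on{R}^\sbull_\catK$: here $\supp(\cone(f))=\comp_R\inv(Z(f))$ by the characterization of the comparison map, and BIK support of $\kos{}$-type objects is $Z(f)$. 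I would then extend to arbitrary $t$ using that $\Gamma_\frakp\bbone$ can be built from Koszul objects and localizations at elements, while Balmer--Favi idempotents are built from the same Thomason subsets pulled back along the bijection $\comp_R$.

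Granting this comparison, for compact $a$ the BIK support $\supp_R(a)$ is closed, equal to $Z(\on{ann}\,\Hom^\sbull_\catK(\bbone,\hom(a,a)))$ when this endomorphism module is finitely generated. This is the standard BIK result for compact objects, but it needs finite generation of $\End^\sbull(a)$ over $\on{R}$, which is not assumed. This is the main obstacle. If finite generation fails, I would argue instead via tt-stratification: the map $\supp$ induces a bijection between localizing ideals and subsets of $\Spc(\catK)$, and by minimality at each prime, $\comp_R$ being bijective forces the BIK local cohomology objects to coincide with the Balmer--Favi idempotents $g(\catP)$. Then the thick ideal generated by $a$ corresponds, via the BIK classification of thick subcategories of compacts, to a specialization-closed subset that is closed because it is a finite union of irreducible closed sets (using Noetherianity of $\on{R}$). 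Concretely, $\supp(a)$ has finitely many generic points $\catP_1,\dots,\catP_r$, and I expect $\comp_R(\supp(a))=\bigcup_i Z(\comp_R(\catP_i))$ by specialization reflection, which is closed. The delicate point is showing that the specialization order is reflected, i.e.\ that $\comp_R(\catQ)\subseteq\comp_R(\catP)$ implies that $\catQ$ lies in the closure of $\catP$, and I would deduce this from the minimality of stratification: $g(\catP)\otimes\Gamma_{V(\comp_R(\catQ))}\neq0$.
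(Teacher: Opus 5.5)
There is a genuine gap, in the step after the support comparison. Beyond that, the obstacle you flagged cannot be bypassed without an extra finiteness hypothesis.

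\textbf{What matches the paper.} Your comparison $\supp_R(t)=\comp_R(\supp_{\catT}(t))$ is the same key input as in the paper. The mechanism you mention last is what actually proves it. For specialization closed $\mathcal{V}\subseteq\Spec^h(\on{R}^\sbull_\catK)$, the BIK idempotent $\Gamma_{\mathcal{V}}$ is tensoring with the Balmer--Favi idempotent of $\comp_R^{-1}(\mathcal{V})$. So injectivity alone gives $\Gamma_{\comp_R(\catP)}\simeq g(\catP)\otimes-$, and no induction over Koszul objects is needed.

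\textbf{The gap in your final step.} You assert that a spectral bijection which is an isomorphism of specialization orders need not be a homeomorphism. This is false. Such a map is a continuous bijection of the compact Hausdorff patch topologies, so it sends closed sets to patch-closed sets that are closed under specialization. In a spectral space, such sets are closed. This is exactly how the paper concludes. It claims $\comp_R(\supp a)=\supp_R(a)$ is specialization closed for compact $a$, and that gives reflection of specialization: if $a\in\catP\setminus\catQ$ and $\comp_R(\catQ)\subseteq\comp_R(\catP)$, then $\comp_R(\catP)\in\supp_R(a)$, so $\catP\in\supp(a)$ by injectivity, a contradiction. Your substitute argument does not work, for three reasons:
\begin{enumerate}
\item That $\supp(a)$ has finitely many generic points is unproved, and it is essentially the closedness you are trying to show.
\item Your reflection statement is reversed: from $\comp_R(\catQ)\subseteq\comp_R(\catP)$ one needs $\catP\in\overline{\{\catQ\}}$, i.e.\ $\catP\subseteq\catQ$.
\item The condition $g(\catP)\otimes\Gamma_{\mathcal{V}(\comp_R(\catQ))}\bbone\neq0$ merely restates $\comp_R(\catQ)\subseteq\comp_R(\catP)$.
\end{enumerate}

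\textbf{The obstacle is real.} The problem you flagged, that $\End^\sbull(a)$ need not be finitely generated over $\on{R}^\sbull_\catK$, cannot be circumvented: without a finiteness hypothesis of this kind the statement fails. Take $\catT=\on{D}(k(t)\times k)$, which is stratified, and $\on{R}^\sbull_\catK=k[t]_{(t)}$, embedded in $\catR^\sbull_\catK=k(t)\times k$ (in degree $0$) via $r\mapsto(r,r(0))$.
\begin{enumerate}
\item Then $\comp_R$ is the continuous bijection from the two-point discrete space $\Spec(k(t)\times k)$ onto $\Spec k[t]_{(t)}=\{(0),(t)\}$. This is not a homeomorphism.
\item For the compact object $a=(k(t),0)$ one gets $\supp_R(a)=\{(0)\}$, which is not specialization closed.
\end{enumerate}

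Your first step has the same problem. For $\catT=\on{D}(k(t))$ and $\on{R}^\sbull_\catK=k[t]$, $\comp_R$ is injective but not surjective, so \Cref{thm:surj} is false as stated. Balmer's induction starts at $a=\bbone$, so it needs $\catR^\sbull_\catK$ (after localizing) to be coherent as an $\on{R}^\sbull_\catK$-module, not just $\on{R}^\sbull_\catK$ to be coherent.

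The paper's proof passes over exactly these two points: surjectivity, and specialization-closedness of $\supp_R$ on compact objects.

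\textbf{A repair.} Assume a Noetherian action, e.g.\ $\Hom^\sbull_\catK(a,b)$ finitely generated over $\on{R}^\sbull_\catK$ for all $a,b\in\catK$. Then both points hold, and your route closes at once: $\comp_R$ is bijective, and $\comp_R(\supp a)=\supp_R(a)=\mathcal{V}(\on{ann}\,\End^\sbull(a))$ is closed.
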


    \begin{proof}
  Since $\on{R}^\sbull_\catK$ is a graded subring of $\catR^\sbull_\catK$, the category $\catT$ is $\on{R}^\sbull_\catK$-linear. We are therefore in the setting of \cite[Section 6]{Kra26}. Consider the following diagram 
       \begin{center}
           \begin{tikzcd}
                \on{Loc}^\otimes(\catT) \ar[rd,"\supp_{\on{R}}"'] \ar[r,"\supp_{\otimes}"] &  \on{P}(\Spc(\catK)) \ar[d,"\comp_{\on{R}}"]  \\ 
                 & 
                 \on{P}(\Spec^h(\on{R}^\sbull_\catK))
           \end{tikzcd}
       \end{center}
       where $\on{P}(-)$ denotes the power set functor,  $\on{Loc}^\otimes(-)$ denotes the collection of localizing ideals, $\supp_{\on{R}}$ denotes the cohomological support, and  $\supp_{\otimes}$ denotes the tt-support.

       We claim that $\comp_{\on{R}}(\supp_{\otimes}(x))\subseteq \supp_{\catO_R^\sbull}(x)$ for any $x\in \catT$. Indeed, let $\catP\in \Spc(\catK)$, and let $Y$ and $Z$ be specialization closed subsets of $\Spec^h(\on{R}^\sbull_\catK)$ such that $\{\comp_{\on{R}}(\catP)\}=Y\cap Z^c$. Using the injectivity of $\comp_{\on{R}}$, together with the formulas given before \cite[Definition 4.17]{Kra26}, we obtain  
       \[
       \Gamma_{\comp_{\on{R}}(\catP)}  (x)\simeq \mathbf e_{\comp_{\on{R}}^{-1}(Y)}\otimes \mathbf{f}_{\comp_{\on{R}}^{-1}(Z)}\otimes x\simeq \on{g}(\catP)\otimes x
       \]
       for any $x\in \catT$. It follows that if $\comp_{\on{R}}(\catP)\notin \supp_{\on{R}}(x)$, then $\catP\notin \supp_{\otimes}(x)$, as desired.   

       Since $\catT$ is tt-stratified, the map $\supp_{\otimes}$ is a bijection, and the tt-support has the detection property; see \cite[Section 4]{Kra26} for further details.
   
       Now, since $\on{R}^\sbull_\catK$ is Noetherian, the local-to-global principle implies that $\supp_{\on{R}}$ is surjective; see \cite[Theorem 3.6]{BIK11} and \cite{Kra26}. Combined with the previous observation, this implies that  $\comp_{\on{R}}$ is surjective. Alternatively, invoke \Cref{thm:surj}. Since we already know that it is injective, it follows that $\comp_{\on{R}}$ is a bijection. It remains to show that $\comp_{\on{R}}$ induces a bijection on specialization closed subsets, since these determine the spaces involved.

       By the previous arguments, we have  
       \[
       \supp_{\on{R}} \on{loc}^\otimes(\on{g}(\catP))=\comp_{\on{R}}(\catP)
       \]
       for any $\catP\in \Spc(\catK)$. We deduce that $\comp_{\on{R}}(\supp x )=\comp_{\on{R}}(\supp_{\otimes} x )= \supp_{\on{R}}(x)$ for any $x\in \catK$. The latter is a specialization closed subset of $\Spec^h(\on{R}^\sbull_\catK)$; see \cite[Section 2]{Kra26}. In particular, we obtain that $\comp_{\on{R}}$ restricts to a bijection between specialization closed subsets, and hence is a homeomorphism, as we wanted.     
    \end{proof}

    \begin{theorem}
        \label{thm-homeo-fullcohomologyring}
         Assume $\catK = \catT^c$ for $\catT$ a tt-stratified big tt-category.  Let $\on{R}^\sbull_\catK \subseteq \catR^\sbull_\catK$ be a finitely generated graded subalgebra of the cohomology ring of $\catK$. If $\comp_R\colon \Spc(\catK) \to \Spec^h(\on{R}^\sbull_\catK)$ is injective, then $\comp_\catK\colon \Spc(\catK) \to \Spec^h(\catR^\sbull_\catK)$ is a homeomorphism.
    \end{theorem}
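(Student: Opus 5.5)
We combine \Cref{thm-bijection} with \Cref{lem-homeo-subring}, and the key step is to compare the topologies on $\Spc(\catK)$ coming from $\on{R}^\sbull_\catK$ and from $\catR^\sbull_\catK$. Since $\on{R}^\sbull_\catK$ is finitely generated over the base field, it is Noetherian. \Cref{lem-homeo-subring} therefore shows that $\comp_R$ is a homeomorphism. \Cref{thm-bijection} shows that $\comp_\catK$ is a bijection. Write $i^*\colon \Spec^h(\catR^\sbull_\catK)\to \Spec^h(\on{R}^\sbull_\catK)$. From $\comp_R = i^*\circ\comp_\catK$, with $\comp_R$ bijective and $\comp_\catK$ bijective, we get that $i^*$ is a bijection, and indeed $i^* = \comp_R\circ\comp_\catK^{-1}$.

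Next we show that $i^*$ is a homeomorphism. It is continuous, so it suffices to show it is closed, or equivalently that $\comp_\catK^{-1}$ is continuous, since $\comp_\catK$ is already continuous. Closed sets of $\Spec^h(\catR^\sbull_\catK)$ are intersections of basic closed sets $Z(f)$ with $f\in \catR^\sbull_\catK$ homogeneous. So it suffices to show that each $\comp_\catK(\supp(\cone(f)))=Z(f)$ (using bijectivity and $\comp_\catK^{-1}(Z(f))=\supp\cone(f)$) corresponds under $\comp_R$ to a closed subset of $\Spec^h(\on{R}^\sbull_\catK)$. This holds because $\supp(\cone(f))$ is closed in $\Spc(\catK)$ and $\comp_R$ is a homeomorphism. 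Hence $i^*(Z(f))=\comp_R(\supp\cone(f))$ is closed.

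The same argument applies to arbitrary intersections, because $i^*$ is a bijection and so commutes with intersections. Therefore $i^*$ is a homeomorphism, and $\comp_\catK = (i^*)^{-1}\circ\comp_R$ is a homeomorphism.

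The main obstacle is the bijectivity of $\comp_\catK$, that is, ruling out primes of the large, possibly non-Noetherian ring $\catR^\sbull_\catK$ that lie outside the image. This is exactly what \Cref{thm-bijection} provides, through its argument of adjoining a single element $f$ to get a finitely generated intermediate subalgebra. Once bijectivity is available, the topological upgrade above is formal: every basic closed set $Z(f)$ of the big ring pulls back to a Thomason-type closed subset of $\Spc(\catK)$, and that subset is already visible through the homeomorphism $\comp_R$.
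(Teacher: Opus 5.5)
Your proposal is correct and is essentially the paper's proof: \Cref{lem-homeo-subring} makes $\comp_R$ a homeomorphism, \Cref{thm-bijection} makes $\comp_\catK$ a bijection, and the factorization $\comp_R = i^*\circ\comp_\catK$ finishes the argument. Your closedness check for $i^*$ via the basic closed sets $Z(f)$ is valid but longer than necessary, since bijectivity already gives $\comp_\catK^{-1} = \comp_R^{-1}\circ i^*$, which is continuous as a composite of continuous maps.
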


    \begin{proof}
        By \Cref{lem-homeo-subring} we have that $\comp_{\on{R}}$ is a homeomorphism, and by \Cref{thm-bijection} we have that $\comp_{\catK}$ is a bijection. The result follows since $\comp_{\on{R}}$ factors through $\comp_{\catK}$. 
    \end{proof}
    
    \bibliography{bib}
    \bibliographystyle{alpha}
    
\end{document}